\tikzset{red vertex/.style={circle,draw,minimum size=2mm,inner sep=0pt,outer sep=4pt,fill=red, color=red}}
\tikzset{blue vertex/.style={circle,draw,minimum size=2mm,inner sep=0pt,outer sep=4pt,fill=blue, color=blue}}
\tikzset{black vertex/.style={circle,draw,minimum size=2mm,inner sep=0pt,outer sep=3pt,fill=black, color=black}}
\tikzset{white vertex/.style={circle,draw,minimum size=2mm,inner sep=0pt,outer sep=3pt, color=black}}
\tikzset{green vertex/.style={circle,draw,minimum size=2mm,inner sep=0pt,outer sep=3pt, color=green, fill=green}}
\tikzstyle{color1}=[color=red] 
\tikzstyle{color2}=[color=black]
\tikzstyle{color3}=[color=blue]
\tikzstyle{color4}=[color=green]
\tikzstyle{edge}=[line width=0.9]
\tikzstyle{possible edge}=[edge, dashed]
\tikzstyle{very loosely dotted}=[dash pattern=on \pgflinewidth off 6pt]
\tikzstyle{nonedge}=[color=red, very loosely dotted, line width=1.3]
\tikzstyle{snake}=[decorate, decoration=snake, segment length=1cm]
\tikzstyle{short snake}=[decorate, decoration=snake, segment length=7mm]
\colorlet{setfilling}{green!5!white}
\colorlet{setborder}{gray}
\let\origsection=\section 
\def\section{\@ifstar{\origsection*}{\mysection}}
\def\mysection{\@startsection{section}{1}\z@{.7\linespacing\@plus\linespacing}{.5\linespacing}{\normalfont\scshape\centering\S}}
\renewcommand{\PrintDOI}[1]{\doi{#1}}
\newcommand*\patchAmsMathEnvironmentForLineno[1]{
\expandafter\let\csname old#1\expandafter\endcsname\csname #1\endcsname
\expandafter\let\csname oldend#1\expandafter\endcsname\csname end#1\endcsname
\renewenvironment{#1}
{\linenomath\csname old#1\endcsname}
{\csname oldend#1\endcsname\endlinenomath}}
\newcommand*\patchBothAmsMathEnvironmentsForLineno[1]{
\patchAmsMathEnvironmentForLineno{#1}
\patchAmsMathEnvironmentForLineno{#1*}}
\newtheorem{theorem}[equation]{Theorem}
\newtheorem{lemma}[equation]{Lemma}
\newtheorem{proposition}[equation]{Proposition}
\newtheorem{conjecture}[equation]{Conjecture}
\newtheorem{corollary}[equation]{Corollary}
\theoremstyle{definition}
\theoremstyle{case}
\numberwithin{equation}{section}
\newcommand{\fD}{\mathfrak{D}}
\newcommand{\fB}{\mathfrak{B}}
\newcommand{\newmathsymb}[2]{}
\newcommand{\SBE}{\(S_\text{BE}\)}
\newcommand{\Set}[1]{\{#1\}}
\newcommand{\sC}{\mathcal{C}}
\newcommand{\sP}{\mathcal{P}}
\newcommand{\sQ}{\mathcal{Q}}
\newcommand\tand{\ \text{and}\ }
\begin{document}

\title{Perfect digraphs}

\author{C. N. da Silva \and O. Lee \and M. Sambinelli}

\shortdate
\yyyymmdddate
\settimeformat{ampmtime}
\date{\today, \currenttime}

\address{Departamento de Computação, Universidade Federal de São Carlos, Sorocaba, Brazil}
\email{candidal@ufscar.br}

\address{Instituto de Computação, Universidade Estadual de Campinas, Campinas, Brazil}
\email{lee@ic.unicamp.br}

\address{Instituto de Matemática e Estatística, Universidade de São Paulo, São Paulo, Brazil}
\email{sambinelli@ime.usp.br}

\thanks{
O. Lee was partially support by FAPESP (grant 2015/11937-9) and CNPq (grants 311373/2015-1 and 425340/2016-3).
M. Sambinelli was partially supported by National Counsel of Technological and Scientific Development (CNPq) (Grant 141216/2016-6) and São Paulo Research Foundation (FAPESP) (Grant~2017/23623-4).
This study was financed in part by the Coordenação de Aperfeiçoamento de Pessoal de Nível Superior, Brasil (CAPES), Finance Code 001.}

\begin{abstract}
  Let \(D\) be a digraph.
  Given a set of vertices \(S \subseteq V(D)\), an \emph{\(S\)-path partition} \(\sP\) of \(D\) is a collection of paths of \(D\) such that \(\{V(P) \: P \in \sP\}\) is a partition of \(V(D)\) and \(|V(P) \cap S| = 1\) for every \(P \in \sP\).
  We say that \(D\) satisfies the \emph{\(\alpha\)-property} if, for every maximum stable set \(S\) of \(D\), there exists an \(S\)-path partition of \(D\), and we say that \(D\) is \emph{\(\alpha\)-diperfect} if every induced subdigraph of \(D\) satisfies the \(\alpha\)-property.
  A digraph \(C\) is an \emph{anti-directed odd cycle} if (i) the underlying graph of \(C\) is a cycle \(x_1x_2 \cdots x_{2k + 1}x_1\), where \(k \in \mathbb{Z}\) and \(k \geq 2\), and (ii) each of the vertices \(x_1, x_2, x_3, x_4, x_6,\) \(x_8, \ldots, x_{2k}\) is either a source or a sink.
  Berge (1982) conjectured that a digraph is \(\alpha\)-diperfect if, and only if, it contains no induced anti-directed odd cycle.
  Remark that this conjecture is strikingly similar to Berge's conjecture on perfect graphs -- nowadays known as the Strong Perfect Graph Theorem (Chudnovsky, Robertson, Seymour, and Thomas, 2006).
  To the best of our knowledge, Berge's conjecture for \(\alpha\)-diperfect digraphs has been verified only for symmetric digraphs and digraphs whose underlying graph are perfect.
  In this paper, we verify it for digraphs whose underlying graphs are series-parallel and for in-semicomplete digraphs.
  Moreover, we propose a conjecture similar to Berge's and verify it for all the known cases of Berge's conjecture.
\end{abstract}

\maketitle

\section{Introduction}
\label{sec:introduction}

All digraphs considered here do not contain loops or parallel arcs (but may contain cycles of length two). 
Terminology and notation used are standard and we refer the reader to Bondy and Murty's Book~\cite{BondyMurty2008} for missing definitions.
Given a digraph \(D\), we denote its vertex set by~\(V(D)\) and its arc set by~\(A(D)\). 
Given a pair of vertices \(u, v \in V(D)\), we write \(uv\) to denote an arc
with tail \(u\) and head \(v\), and we say that \(u\) and \(v\) are \emph{adjacent} in \(D\) if \(\{uv, vu\} \cap A(D) \neq \emptyset\); otherwise we say that \(u\) and \(v\) are \emph{non-adjacent}.
A \emph{stable set} of \(D\) is a set \(S \subseteq V(D)\) such that every pair of vertices in \(S\) are non-adjacent in \(D\).
We say that a stable set~\(S\) of \(D\) is \emph{maximum} if for every stable set \(S'\) of \(D\) it follows that \(|S'| \leq |S|\).
The \emph{stability number} of \(D\), denoted by \(\alpha(D)\), is the size of a maximum stable set of~\(D\).
A \emph{path}~\(P\) of a digraph~\(D\) is a sequence \(v_1v_2 \cdots v_\ell\) of distinct vertices of~\(D\) such that \(v_iv_{i + 1} \in A(D)\), for \(i = 1, 2, \ldots, \ell - 1\).
Sometimes, when convenient, we treat \(P\) as being the subdigraph of \(D\) with vertex set \(\{v_0, v_1, \ldots, v_\ell\}\) and arc set \(\{v_iv_{i + 1} \colon i = 0, 1, \ldots, \ell - 1\}\).
A collection of paths \(\sP = \{P_1, P_2, \ldots, P_\ell\}\) is a \emph{path partition} of~\(D\) if \(\{V(P_1), V(P_2), \ldots, V(P_\ell)\}\) is a partition of \(V(D)\).
We denote by \(\pi(D)\) the size of a smallest path partition of \(D\).
The following is a classical result by Gallai and Milgram~\cite{GallaiMilgram1960}.

\begin{theorem}[Gallai and Milgram, 1960]\label{the:gallai-milgram}
	For every digraph \(D\), we have \(\pi(D) \leq \alpha(D)\).
\end{theorem}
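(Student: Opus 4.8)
The plan is to prove the Gallai--Milgram theorem by induction on the number of vertices of \(D\), using a minimal-counterexample or extremal argument on a smallest path partition. The cleanest route I know is to argue about the set of \emph{initial} (or \emph{terminal}) vertices of the paths in a minimum path partition, showing that if we cannot reduce the number of paths then these endpoints form a stable set of size \(\pi(D)\), which immediately yields \(\alpha(D) \geq \pi(D)\).

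Concretely, I would proceed as follows. Fix a path partition \(\sP = \{P_1, \ldots, P_k\}\) of \(D\) with \(k = \pi(D)\) that is \emph{minimal} in a suitable secondary sense — for instance, minimizing the total number of arcs, or chosen so that the multiset of path lengths is lexicographically smallest. Let \(s_i\) denote the initial vertex (tail end) of \(P_i\), and consider the set \(S = \{s_1, \ldots, s_k\}\). The key claim is that \(S\) is a stable set. Suppose not: then some two initial vertices, say \(s_i\) and \(s_j\) with \(i \neq j\), are adjacent, so without loss of generality \(s_j s_i \in A(D)\). But then I can prepend \(s_j\)'s path in front of \(P_i\): concatenating \(P_j\) with \(P_i\) along the arc from the terminal-adjustment creates a single path covering \(V(P_i) \cup V(P_j)\), reducing the number of paths to \(k - 1\). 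This contradicts the minimality \(k = \pi(D)\) unless the concatenation fails to be a valid path.

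The main obstacle is precisely making that concatenation step rigorous, because naively gluing two paths at their initial vertices does not produce a path. The arc \(s_j s_i\) goes into the \emph{start} of \(P_i\), so appending \(P_j\) to it is not immediate — one really wants to exchange endpoints more carefully. The standard fix is to set up the induction so that the merge is clean: delete a terminal vertex \(t\) of some path \(P_i\), apply the induction hypothesis to \(D - t\) to get a path partition into at most \(\alpha(D - t) \le \alpha(D)\) paths, and then reinsert \(t\). One shows that either \(t\) can be appended to the end of one of these paths (keeping the count the same), or \(t\)'s out-neighbors among the path-initial vertices let us splice \(t\) onto the front of a path and still save, the delicate case being when \(t\) is adjacent to the initial vertices of \emph{all} the paths in the sub-partition, which would force a stable set of size exceeding \(\alpha(D)\) and give the bound directly. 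Handling this adjacency bookkeeping — ensuring every case either reduces the path count or exhibits a large stable set — is where the real care is needed.

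I would therefore structure the write-up as: (1) state the induction on \(|V(D)|\), with the base case \(|V(D)| \le 1\) trivial; (2) remove a well-chosen vertex and invoke the hypothesis; (3) analyze how that vertex attaches to the resulting paths; and (4) in the residual case, read off a stable set of size \(\pi(D)\) from the path endpoints. The whole argument is short once the endpoint-exchange lemma is phrased correctly, so I expect the proof to fit in well under a page, with the adjacency case analysis in step (3) being the only part demanding genuine attention.
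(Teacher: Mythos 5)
The paper itself offers no proof of Theorem~\ref{the:gallai-milgram}; it is quoted as a classical result with a citation to Gallai and Milgram, so there is nothing in the paper to compare your argument against, and I assess it on its own terms. It contains a genuine gap, and the central claim of your first route is false: the initial vertices of a minimum path partition need not form a stable set, no matter which secondary minimization you impose. Consider \(V(D)=\{a,b,c,d\}\) with \(A(D)=\{ab,\,cd,\,ca\}\). Here \(\pi(D)=2\) and the only two minimum path partitions are \(\{ab,\;cd\}\) and \(\{cab,\;d\}\); their initial-vertex sets are \(\{a,c\}\) and \(\{c,d\}\), and both contain an adjacent pair (via the arcs \(ca\) and \(cd\), respectively). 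Reversing all arcs gives a digraph in which the terminal-vertex version fails for every minimum partition. So step (4) of your outline, ``read off a stable set of size \(\pi(D)\) from the path endpoints,'' cannot be carried out in general.

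The fallback induction you describe does not close either, because it is run on the unstrengthened statement \(\pi\le\alpha\). After deleting a terminal vertex \(t\) and invoking the hypothesis on \(D-t\), the problematic case is when \(t\) cannot be attached to any path of the resulting partition; to finish you must produce a stable set of size exceeding \(\alpha(D)\), but ``\(t\) is adjacent to the initial vertices of all the paths'' produces no stable set, and non-adjacency of \(t\) to all the terminal vertices helps only if you already know those terminal vertices are pairwise non-adjacent, which is exactly what the example above forbids you to assume. The missing idea is to strengthen the inductive statement: for instance, prove that every digraph has a path partition \(\mathcal{P}\) together with a stable set containing exactly one vertex of \emph{each} path of \(\mathcal{P}\) (not necessarily an endpoint), or, equivalently, work with a path partition whose set of terminal vertices is minimal under inclusion and show that it admits such a stable transversal. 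With that stronger hypothesis the endpoint-exchange step becomes clean and yields \(\alpha(D)\ge|\mathcal{P}|\ge\pi(D)\); without it, the ``adjacency bookkeeping'' you defer to step (3) is precisely where the proof breaks.
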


Given a digraph \(D\) and a stable set \(S \subseteq V(D)\), an \emph{\(S\)-path partition} \(\sP\) of \(D\) is a path partition where each path  in \(\sP\) has precisely one vertex in \(S\), i.e., \(|V(P) \cap S| = 1\) for all \(P \in \sP\). 
In the early 80's, Berge~\cite{Berge1982b} noticed that although several proofs of Theorem~\ref{the:gallai-milgram} had been discovered until then, none implied that every digraph~\(D\) contains an \(S\)-path partition \(\sP\) for some maximum stable set \(S\).
Note that the existence of such \(S\)-path partition would imply in Theorem~\ref{the:gallai-milgram}, since \(\pi(D) \leq |\sP| = |S| = \alpha(D)\) (later Meyniel~\cite{Meyniel1986} showed that there are digraphs which admits no \(S\)-path partition for every maximum stable set \(S\)).
This led Berge~\cite{Berge1982b} to  propose the class of \(\alpha\)-diperfect digraphs.
We say that a digraph \(D\) satisfies the \emph{\(\alpha\)-property} if, for every maximum stable set \(S\) of \(D\), there exists an \(S\)-path partition of \(D\), and we say that \(D\) is \emph{\(\alpha\)-diperfect} if every induced subdigraph of~\(D\) satisfies the \(\alpha\)-property.

Given a digraph \(D\), we denote its underlying graph by \(U(D)\) (in this text we always consider that the underlying graph is simple).
A digraph \(C\) is an \emph{anti-directed odd cycle} if (i) \(U(C) = x_1x_2 \cdots x_{2k + 1}x_1\) is a cycle, where \(k \in \mathbb{Z}\) and \(k \geq 2\), and (ii) each of the vertices \(x_1, x_2, x_3, x_4, x_6,\) \(x_8, \ldots, x_{2k}\) is either a source or a sink in~\(D\) (see Figure~\ref{fig:anti-directed}).
Berge~\cite{Berge1982b} showed that anti-directed odd cycles do not satisfy the \(\alpha\)-property, and hence are not \(\alpha\)-diperfect, which led him to conjecture the following characterization for \(\alpha\)-diperfect digraphs~\cite{Berge1982b}.
Note that it is strikingly similar to Berge's conjecture on perfect graphs -- nowadays known as the Strong Perfect Graph Theorem~\cite{ChudnovskyEtAl2006} (see Theorem~\ref{the:strong-perfect-graphs}).

\begin{figure}
	\begin{subfigure}{0.4\textwidth}
		\centering\scalebox{0.9}{\tikzset{middlearrow/.style={
	decoration={markings,
		mark= at position 0.7 with {\arrow{#1}},
	},
	postaction={decorate}
}}

\tikzset{digon/.style={
	decoration={markings,
		mark= at position 0.4 with {\arrow[inner sep=10pt]{<}},
		mark= at position 0.8 with {\arrow[inner sep=10pt]{>}},
	},
	postaction={decorate}
}}

\begin{tikzpicture}[scale = 0.6]

	\def\myr{3}

	\node (x6) [black vertex] at (0*40-110:\myr) {};
	\node (label_x6) [] at      (0*40-110:\myr + 0.6) {\(x_6\)};

	\node (x5) [black vertex] at (1*40-110:\myr) {};
	\node (label_x5) [] at      (1*40-110:\myr + 0.6) {\(x_5\)};

	\node (x4) [black vertex] at (2*40-110:\myr) {};
	\node (label_x4) [] at      (2*40-110:\myr + 0.6) {\(x_4\)};

	\node (x3) [black vertex] at (3*40-110:\myr) {};
	\node (label_x3) [] at      (3*40-110:\myr + 0.6) {\(x_3\)};

	\node (x2) [black vertex] at (4*40-110:\myr) {};
	\node (label_x2) [] at      (4*40-110:\myr + 0.6) {\(x_2\)};

	\node (x1) [black vertex] at (5*40-110:\myr) {};
	\node (label_x1) [] at      (5*40-110:\myr + 0.6) {\(x_1\)};

	\node (x9) [black vertex] at (6*40-110:\myr) {};
	\node (label_x9) [] at      (6*40-110:\myr + 0.6) {\(x_9\)};

	\node (x8) [black vertex] at (7*40-110:\myr) {};
	\node (label_x8) [] at      (7*40-110:\myr + 0.6) {\(x_8\)};

	\node (x7) [black vertex] at (8*40-110:\myr) {};
	\node (label_x7) [] at      (8*40-110:\myr + 0.6) {\(x_7\)};

	\draw[edge, middlearrow={>}] (x1) -- (x2);
	\draw[edge, middlearrow={>}] (x3) -- (x2);
	\draw[edge, middlearrow={>}] (x3) -- (x4);
	\draw[edge, middlearrow={>}] (x5) -- (x4);
	\draw[edge, middlearrow={>}] (x6) -- (x5);
	\draw[edge, middlearrow={>}] (x6) -- (x7);
	\draw[edge, middlearrow={>}] (x7) -- (x8);
	\draw[edge, middlearrow={>}] (x9) -- (x8);
	\draw[edge, middlearrow={>}] (x1) -- (x9);

\end{tikzpicture}

 }
	\end{subfigure}
	\begin{subfigure}{0.4\textwidth}
		\centering\scalebox{0.9}{\tikzset{middlearrow/.style={
	decoration={markings,
		mark= at position 0.7 with {\arrow{#1}},
	},
	postaction={decorate}
}}

\tikzset{digon/.style={
	decoration={markings,
		mark= at position 0.4 with {\arrow[inner sep=10pt]{<}},
		mark= at position 0.8 with {\arrow[inner sep=10pt]{>}},
	},
	postaction={decorate}
}}

\begin{tikzpicture}[scale = 0.6]

	\def\myr{3}

	\node (x6) [black vertex] at (0*40-110:\myr) {};
	\node (label_x6) [] at      (0*40-110:\myr + 0.6) {\(x_6\)};

	\node (x5) [black vertex] at (1*40-110:\myr) {};
	\node (label_x5) [] at      (1*40-110:\myr + 0.6) {\(x_5\)};

	\node (x4) [black vertex] at (2*40-110:\myr) {};
	\node (label_x4) [] at      (2*40-110:\myr + 0.6) {\(x_4\)};

	\node (x3) [black vertex] at (3*40-110:\myr) {};
	\node (label_x3) [] at      (3*40-110:\myr + 0.6) {\(x_3\)};

	\node (x2) [black vertex] at (4*40-110:\myr) {};
	\node (label_x2) [] at      (4*40-110:\myr + 0.6) {\(x_2\)};

	\node (x1) [black vertex] at (5*40-110:\myr) {};
	\node (label_x1) [] at      (5*40-110:\myr + 0.6) {\(x_1\)};

	\node (x9) [black vertex] at (6*40-110:\myr) {};
	\node (label_x9) [] at      (6*40-110:\myr + 0.6) {\(x_9\)};

	\node (x8) [black vertex] at (7*40-110:\myr) {};
	\node (label_x8) [] at      (7*40-110:\myr + 0.6) {\(x_8\)};

	\node (x7) [black vertex] at (8*40-110:\myr) {};
	\node (label_x7) [] at      (8*40-110:\myr + 0.6) {\(x_7\)};

	\draw[edge, middlearrow={>}] (x1) -- (x2);
	\draw[edge, middlearrow={>}] (x3) -- (x2);
	\draw[edge, middlearrow={>}] (x3) -- (x4);
	\draw[edge, middlearrow={>}] (x5) -- (x4);
	\draw[edge, middlearrow={>}] (x5) -- (x6);
	\draw[edge, middlearrow={>}] (x7) -- (x6);
	\draw[edge, middlearrow={>}] (x7) -- (x8);
	\draw[edge, middlearrow={>}] (x9) -- (x8);
	\draw[edge, middlearrow={>}] (x1) -- (x9);

\end{tikzpicture}

 }
	\end{subfigure}
	\caption{All possible non-isomorphic anti-directed odd cycles of order \(9\).}
    \label{fig:anti-directed}
\end{figure}

\begin{conjecture}[Berge, 1982]\label{con:diperfect}
	A digraph \(D\) is \(\alpha\)-diperfect if, and only if, \(D\) contains no induced anti-directed odd cycle.
\end{conjecture}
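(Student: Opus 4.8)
Since the statement is an equivalence, I would treat its two directions entirely separately, and I expect only one of them to carry any difficulty.

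The forward implication is immediate from the definitions. Suppose \(D\) is \(\alpha\)-diperfect. Berge's own observation is that an anti-directed odd cycle fails the \(\alpha\)-property; since \(\alpha\)-diperfectness requires \emph{every} induced subdigraph to satisfy the \(\alpha\)-property, an induced anti-directed odd cycle would be an induced subdigraph violating it, a contradiction. Thus no new work is needed here beyond quoting Berge's computation for the cycles themselves.

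The reverse implication is the substance. I would first reduce it to a single non-hereditary statement. Since the class of digraphs with no induced anti-directed odd cycle is closed under taking induced subdigraphs, it suffices to prove that \emph{if \(D\) contains no induced anti-directed odd cycle, then \(D\) satisfies the \(\alpha\)-property}; applying this to every induced subdigraph of \(D\) then yields \(\alpha\)-diperfectness. I would attack it by contradiction through a counterexample \(D\) minimising \(|V(D)|\), fixing a maximum stable set \(S\) that admits no \(S\)-path partition; minimality forces every proper induced subdigraph to satisfy the \(\alpha\)-property, which is the lever for all subsequent structural deductions (for instance, \(D-v\) has an \(S'\)-path partition for each of its maximum stable sets, and one can try to graft such a partition back onto \(v\)). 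A valid \(S\)-path partition necessarily has exactly \(|S| = \alpha(D)\) paths, meeting the Gallai--Milgram bound of Theorem~\ref{the:gallai-milgram} with equality, so we are really asking for a minimum path partition whose \(|S|\) paths carry \(|S|\) distinct vertices of \(S\). The engine of the argument would then be a matching/flow reformulation of this requirement: splitting each vertex \(v\) into a tail copy \(v^+\) and a head copy \(v^-\) and placing an edge \(v^+w^-\) for each arc \(vw\) encodes path systems as matchings in a bipartite graph \(B(D)\) (the spurious cycles being eliminated as in Gallai--Milgram), with the ``one \(S\)-vertex per path'' condition imposed as a saturation constraint. When no admissible matching exists, a Hall/K\"onig deficiency produces a set witnessing the obstruction, and the non-adjacencies guaranteed by the stability of \(S\) should let me read off from this obstruction an induced cycle of odd length whose source/sink pattern is exactly that of an anti-directed odd cycle.

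The hard part is precisely this last extraction in full generality: turning a purely numerical matching deficiency into the rigidly structured induced anti-directed odd cycle is the open core of the conjecture, just as the analogous step is the heart of the Strong Perfect Graph Theorem. I would therefore not expect to settle the general case by this route, and would instead specialise the extraction to classes where the structure of non-adjacency is controlled: for digraphs whose underlying graph is series--parallel I would induct along the series/parallel decomposition, re-assembling \(S\)-path partitions across the composition operations; for in-semicomplete digraphs I would exploit that non-adjacency is so scarce that the possible shapes of an induced anti-directed odd cycle are tightly constrained, allowing either a direct construction of the partition or a direct contradiction.
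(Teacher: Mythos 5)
The statement you are asked to prove is an \emph{open conjecture}: the paper contains no proof of Conjecture~\ref{con:diperfect}, only a verification of the necessity (quoting Berge's computation that anti-directed odd cycles fail the \(\alpha\)-property) and of the sufficiency restricted to a few hereditary classes. Your treatment of the forward direction is correct and is exactly what the paper does, and your reduction of the reverse direction to the non-hereditary statement ``no induced anti-directed odd cycle implies the \(\alpha\)-property'' is valid because the class \(\fB\) is closed under induced subdigraphs. But the core of your argument has a genuine gap, and you name it yourself: the step that converts a Hall/K\"onig deficiency in the split bipartite graph \(B(D)\) into an \emph{induced} anti-directed odd cycle is not carried out, and there is no reason to believe the deficiency certificate localises to such a rigid structure in general. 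Since you explicitly concede that you ``would not expect to settle the general case by this route,'' the proposal is not a proof of the statement; it is a (reasonable) research plan whose decisive step is precisely the open problem.

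On the special cases you fall back to, your sketches also diverge from what the paper actually does, in ways worth noting. For series-parallel underlying graphs the paper does not induct along the series/parallel composition operations; it shows (Lemma~\ref{lem:sp-2-connected-cycle-two-vertices-degree-gt-2}) that a \(2\)-connected series-parallel graph has an induced cycle with at most two vertices of degree exceeding two, and then uses the partition lemmas (Lemmas~\ref{lem:clique-cut-alpha=alpha1+alpha2} and~\ref{lem:cycle-at-most-two-vertices-degree-greater-2}) to split a minimal counterexample into induced subdigraphs \(H_1, H_2\) with \(\alpha(D)=\alpha(H_1)+\alpha(H_2)\), contradicting Lemma~\ref{lem:dip-vertex-partition-sum-alpha}; the degenerate case where \(U(D)\) is itself a cycle is handled separately in Lemma~\ref{lem:diperfect-cycle}. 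For in-semicomplete digraphs the paper does not argue via scarcity of non-adjacency constraining the forbidden cycles (indeed Theorem~\ref{the:lis-diperfect} needs no forbidden-subdigraph hypothesis at all); it uses hamiltonicity of strong in-semicomplete digraphs (Theorem~\ref{the:lis-hamiltonian-iff-strong}), the domination property of strong components (Lemma~\ref{lem:dominate-one-dominate-all-strong-component}), and the universal-vertex reduction (Lemma~\ref{lem:universal-diperfect}). If you intend to pursue the special cases, these are the mechanisms you would need to supply; as written, neither the general argument nor the specialised ones is complete.
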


\begin{theorem}[Chudnovsky, Robertson, Seymour, and Thomas, 2006]\label{the:strong-perfect-graphs}
	A graph \(G\) is perfect if, and only if, neither \(G\) nor its complement contains an induced odd cycle of order at least \(5\).
\end{theorem}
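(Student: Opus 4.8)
The plan is to prove the two directions separately, with essentially all of the difficulty concentrated in the ``only if'' part. Call a graph \emph{Berge} if neither it nor its complement contains an induced odd cycle of length at least five (an \emph{odd hole}, respectively an \emph{odd antihole}); the statement is then that Berge graphs are exactly the perfect graphs. For the easy direction, note that the odd hole $C_{2k+1}$ with $k \geq 2$ has chromatic number $3$ and clique number $2$, while the odd antihole $\overline{C_{2k+1}}$ satisfies $\chi = k+1 > k = \omega$; hence both are imperfect. Since perfection is inherited by induced subgraphs, every perfect graph must avoid both as induced subgraphs, i.e.\ is Berge. It remains to prove that every Berge graph is perfect.

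For the hard direction I would argue by contradiction, taking a \emph{minimal imperfect} Berge graph $G$ --- one that is not perfect but all of whose proper induced subgraphs are perfect --- with $|V(G)|$ minimum. By the classical theory of minimal imperfect graphs (Lov\'asz, Padberg), such a $G$ is highly constrained: it is \emph{partitionable}, it is connected and co-connected, and it admits no clique cutset. The strategy is to contradict these properties by forcing a structural decomposition of $G$. The heart of the argument is a \emph{decomposition theorem} for Berge graphs, asserting that every Berge graph either (a)~belongs to one of a few \emph{basic} classes --- bipartite graphs, line graphs of bipartite graphs, the complements of these, or double split graphs --- or (b)~admits a $2$-join, or its complement does, or (c)~admits a balanced skew partition.

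I would then dispatch each outcome by two facts. First, every basic class is perfect: bipartite graphs are trivially perfect, line graphs of bipartite graphs are perfect by K\H{o}nig's edge-colouring theorem, their complements are perfect by Lov\'asz's weak perfect graph theorem, and double split graphs yield to a direct check; hence $G$ is not basic. Second, a minimal imperfect graph can admit none of the three decompositions: the $2$-join case is excluded by the Cornu\'ejols--Cunningham analysis of how optimal colourings glue across a $2$-join, and the balanced skew partition case is excluded by the resolution of Chv\'atal's skew-partition conjecture in the balanced form needed here. Combining, $G$ lies outside every case of the decomposition theorem, a contradiction; so no minimal imperfect Berge graph exists and every Berge graph is perfect.

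The main obstacle, by a wide margin, is proving the decomposition theorem itself. The plan there is a long configuration analysis: one studies how induced \emph{prisms} (two triangles joined by three vertex-disjoint paths), \emph{wheels}, and long even and odd paths must attach to the rest of a Berge graph, showing that the presence of any such configuration forces one of the decompositions, while its total absence forces $G$ to be basic. Controlling the interaction of these configurations, and in particular extracting the \emph{balanced} (rather than merely arbitrary) skew partition required to kill the minimal counterexample, is where the overwhelming bulk of the case work --- and the genuine depth of the theorem --- resides.
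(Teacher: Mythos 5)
This statement is not proved in the paper at all: it is quoted verbatim as an external result (the Strong Perfect Graph Theorem of Chudnovsky, Robertson, Seymour, and Thomas) and used only as a point of comparison for Berge's conjecture on $\alpha$-diperfect digraphs. So there is no ``paper's proof'' to match your attempt against; the relevant benchmark is the published proof itself, and your outline is a faithful summary of its architecture: the easy direction via $\chi(C_{2k+1})=3>2=\omega$ and $\chi(\overline{C_{2k+1}})=k+1>k=\omega$ plus heredity of perfection, and the hard direction via a minimal imperfect Berge graph, the decomposition theorem (basic classes, $2$-joins, complement $2$-joins, balanced skew partitions), Cornu\'ejols--Cunningham for $2$-joins, and the balanced skew partition lemma.

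That said, what you have written is a roadmap rather than a proof, and the gap is not a small one. Every step in the ``only if'' direction that carries actual content is invoked, not established: the decomposition theorem for Berge graphs is stated in one sentence but its proof occupies well over a hundred pages of case analysis on prisms, wheels, and attachments of paths; the claim that double split graphs are perfect needs its own (short but nontrivial) verification; and the fact that a minimal imperfect graph admits no balanced skew partition is itself a substantial theorem proved in the same paper, not a previously available black box. There is also a point you should be careful with in the $2$-join case: the Cornu\'ejols--Cunningham argument shows that a minimally imperfect graph with a proper $2$-join must be an odd hole, so you need to observe explicitly that Berge-ness excludes that exceptional outcome, and the complement $2$-join case requires Lov\'asz's self-complementarity of minimal imperfection to transfer the argument. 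None of this makes your plan wrong --- it is the correct plan --- but as submitted it establishes only the easy direction and defers the entire substance of the theorem to results you have named but not proved.
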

 
The necessity of Conjecture~\ref{con:diperfect} was verified by Berge~\cite{Berge1982b}, since he proved that anti-directed odd cycles do not satisfy the \(\alpha\)-property, but the sufficiency remains open.
Berge~\cite{Berge1982b} also verified Conjecture~\ref{con:diperfect} for digraphs whose underlying graphs are perfect and for symmetric digraphs.
To the best of our knowledge these are the only particular cases verified for this conjecture so far.

A graph \(G\) is \emph{series-parallel} if it can be obtained from the null graph by applying the following operations repeatedly: (i)~adding a vertex \(v\) with degree at most one; (ii)~adding a loop; (iii)~adding a parallel edge; (iv)~subdividing an edge.
A well-known characterization of series-parallel graphs is that a graph is series-parallel if, and only if, it contains no subdivision of~\(K_4\) (complete graph with order~\(4\)).
Series-parallel graphs are a classical class of graphs and a common start point towards verifying graph theoretical conjectures~\cite{EhrenmullerFernandesHeise17,JuvanMoharThomas99,Merker15,NoRo14,LeWa01}.
A digraph \(D\) is \emph{(locally) in-semicomplete} if, for every vertex \(v \in V(D)\), the in-neighborhood  of \(v\) induces a semicomplete digraph.
Note that out-trees, directed cycles, and semicomplete digraphs are all in-semicomplete digraphs.
This class was introduced by Bang-Jensen~\cite{Bang-Jensen1990,Bang-Jensen1995} and have been well studied in the literature~\cite{GuoVolkmann1994,Bang-JensenEtAl1997,GuoVolkmann1994b,Galeana-SanchezOlsen2016,Huang1998}.
For instance, Bondy's Conjecture and Laborde-Payan-Xuong's Conjecture remain open for general digraphs but were verified for this class~\cite{Bang-JensenEtAl2006,Galeana-SanchezGomes2008}.
We refer the reader to the book by Bang-Jensen and Gutin~\cite{Bang-JensenGutin2008} for further information on in-semicomplete digraphs and related classes. 

In this work, we verify Conjecture~\ref{con:diperfect} for digraphs whose underlying graphs are series-parallel (Section~\ref{sec:series-parallel}), in-semicomplete digraphs (Section~\ref{sec:in-semicomplete}), and for a small extension of symmetric digraphs (Section~\ref{sec:k-semi-symmetric}).

The lack of results for Conjecture~\ref{con:diperfect} and the complexity of the proof of Theorem~\ref{the:strong-perfect-graphs}, led us to believe that Conjecture~\ref{con:diperfect} is a very challenging problem.
Trying to understand this difficulty and hoping to obtain a deeper insight on this problem, we decided to study a class of digraphs that we named \emph{BE-diperfect}.
Given a digraph \(D\) and a stable set \(S \subseteq V(D)\), a path partition \(\sP\) is an \emph{\(S_{\rm BE}\)-path partition} of \(D\) if for every \(P = x_1x_2 \cdots x_k \in \sP\), we have \(V(P) \cap S = \{x\}\), where \(x \in \{x_1, x_k\}\).
A digraph \(D\) satisfies the \emph{BE-property} (short for \emph{Begin-End property}) if there exists an \(S_{\rm BE}\)-path partition for every maximum stable set \(S\) of \(D\), and we say that \(D\) is \emph{BE-diperfect} if every induced subdigraph of \(D\) satisfies the BE-property.
Clearly if a digraph \(D\) is BE-diperfect then it is also \(\alpha\)-diperfect.

\begin{figure}
	\begin{subfigure}{0.4\textwidth}
		\centering\scalebox{0.9}{\tikzset{middlearrow/.style={
	decoration={markings,
		mark= at position 0.6 with {\arrow{#1}},
	},
	postaction={decorate}
}}

\tikzset{digon/.style={
	decoration={markings,
		mark= at position 0.4 with {\arrow[inner sep=10pt]{<}},
		mark= at position 0.8 with {\arrow[inner sep=10pt]{>}},
	},
	postaction={decorate}
}}

\begin{tikzpicture}[scale = 0.6]

	\def\myr{2}

	\node (x3) [black vertex] at (0*120+90:\myr) {};
	\node (label_x3) [] at      (0*120+90:\myr + 0.6) {\(x_3\)};

	\node (x2) [black vertex] at (2*120+90:\myr) {};
	\node (label_x2) [] at      (2*120+90:\myr + 0.6) {\(x_2\)};

	\node (x1) [black vertex] at (1*120+90:\myr) {};
	\node (label_x1) [] at      (1*120+90:\myr + 0.6) {\(x_1\)};

	\draw[edge, middlearrow={>}] (x1) -- (x2);
	\draw[edge, middlearrow={>}] (x1) -- (x3);
	\draw[edge, middlearrow={>}] (x3) -- (x2);

\end{tikzpicture}

 }
	\end{subfigure}
	\begin{subfigure}{0.4\textwidth}
		\centering\scalebox{0.9}{\tikzset{middlearrow/.style={
	decoration={markings,
		mark= at position 0.6 with {\arrow{#1}},
	},
	postaction={decorate}
}}

\tikzset{digon/.style={
	decoration={markings,
		mark= at position 0.4 with {\arrow[inner sep=10pt]{<}},
		mark= at position 0.8 with {\arrow[inner sep=10pt]{>}},
	},
	postaction={decorate}
}}

\begin{tikzpicture}[scale = 0.6]

	\def\myr{2}
	\def\rot{193}

	\foreach \i in {1, 2, 3, 4, 5, 6, 7} 
	{
		\node (x\i) [black vertex] at (\i*51.4+\rot:\myr) {};
		\node (label_x\i) [] at      (\i*51.4+\rot:\myr + 0.6) {\(x_\i\)};
	}

	\draw[edge, middlearrow={>}] (x1) -- (x2);
	\draw[edge, middlearrow={>}] (x1) -- (x7);
	\draw[edge, middlearrow={>}] (x3) -- (x2);
	\draw[edge, middlearrow={>}] (x4) -- (x3);
	\draw[edge, middlearrow={>}] (x5) to [bend left] (x4);
	\draw[edge, middlearrow={>}] (x4) to [bend left] (x5);
	\draw[edge, middlearrow={>}] (x6) to [bend left] (x7);
	\draw[edge, middlearrow={>}] (x7) to [bend left] (x6);
	\draw[edge, middlearrow={>}] (x5) to  (x6);

\end{tikzpicture}

 }
	\end{subfigure}
	\caption{Examples of blocking odd cycles.}\label{fig:blocking-odd-cycle}
\end{figure}

A digraph \(C\) is a \emph{blocking odd cycle} if \(U(C) = x_1x_2 \cdots x_{2k + 1}x_1\) is a cycle, where \(k\) is a positive integer, and each \(x_1\)  and \(x_2\) is either a source or a sink.
For examples of blocking odd cycles see Figure~\ref{fig:blocking-odd-cycle} -- the digraph on the left is called \emph{transitive triangle}.
In this case, we say that \((x_1, x_2)\) is a \emph{blocking pair} of \(C\).
Note that every anti-directed odd cycle is also a blocking odd cycle.
The following proposition shows that blocking odd cycles do not satisfy the BE-property.

\begin{proposition}\label{prop:bed-blocking-cycle}
	If \(D\) is a blocking odd cycle, then \(D\) does not satisfy the BE-property.
\end{proposition}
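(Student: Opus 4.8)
To prove the proposition I will exhibit a single maximum stable set \(S\) of \(C\) for which no \(S_{\rm BE}\)-path partition exists. First I would normalize the blocking pair. Since \(x_1\) and \(x_2\) are adjacent, the unique edge joining them is oriented; a source emits only out-arcs and a sink receives only in-arcs, so \(x_1\) and \(x_2\) can be neither both sources nor both sinks. Hence exactly one of them is a source and the other a sink, and after possibly reading the cycle in the reverse direction (which renames the vertices but leaves \(C\) and the blocking pair untouched) I may assume that \(x_1\) is a source and \(x_2\) is a sink; in particular \(x_1x_2, x_1x_{2k+1}, x_3x_2 \in A(C)\). Because reversing all arcs of \(C\) preserves stable sets, path partitions, and the \(S_{\rm BE}\)-condition, this costs no generality.

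Next I would fix the candidate stable set. The underlying odd cycle has \(\alpha(C) = k\), and I would take \(S = \{x_3, x_5, \ldots, x_{2k+1}\}\), which is stable (its indices are pairwise non-consecutive on the cycle, the gap between \(x_{2k+1}\) and \(x_3\) being bridged by \(x_1, x_2\)) and has size \(k\), hence is maximum. The whole argument then rests on one structural remark: since \(U(C)\) is a cycle, the vertex set of every directed path of \(C\) is a set of consecutive vertices of the cycle, so any \(S_{\rm BE}\)-path partition \(\sP\) cuts the cycle into \(|S| = k\) arcs, each carrying exactly one vertex of \(S\) at one of its two ends.

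I would then force two of these arcs. As a source, \(x_1\) must begin its arc; following it towards \(x_2\) dead-ends at the sink \(x_2\) and produces the arc \(x_1x_2\), which contains no vertex of \(S\) and is therefore illegal, so the arc must instead start \(x_1x_{2k+1}\); as \(x_{2k+1} \in S\) must sit at an endpoint, the arc is exactly \(x_1x_{2k+1}\). Symmetrically, \(x_2\) (a sink) ends its arc, its neighbour \(x_1\) is already used, and \(x_3 \in S\) must be an endpoint, forcing the arc \(x_3x_2\). These two arcs consume the \(S\)-vertices \(x_{2k+1}\) and \(x_3\), leaving the segment \(x_4x_5\cdots x_{2k}\) to be split into the remaining \(k-2\) arcs using the \(S\)-vertices \(x_5, x_7, \ldots, x_{2k-1}\). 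On this segment the \(S\) and non-\(S\) vertices alternate perfectly (odd versus even indices), so a legal arc — one \(S\)-vertex, placed at an endpoint — can contain at most two vertices: an arc on three consecutive vertices either puts an \(S\)-vertex in its interior or contains two \(S\)-vertices. Thus the \(k-2\) remaining arcs cover at most \(2(k-2) = 2k-4\) vertices, whereas the segment has \(2k-3\) vertices, a contradiction. (When \(k=1\) the segment is empty and \(x_{2k+1}=x_3\); there the single forced arc \(x_1x_3\) already omits \(x_2\), giving the contradiction directly.)

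The main obstacle I anticipate is not the forcing but the leftover segment: because the orientations of the edges inside \(x_4\cdots x_{2k}\) are not constrained by the hypotheses, one cannot hope to derive the contradiction from directions alone. The decisive idea is that the \emph{combinatorics of \(S\)} — its alternation on the segment together with the begin/end requirement — already caps every admissible arc at two vertices, so a counting bound defeats every partition irrespective of the unknown orientations. I would double-check the two places where the argument could slip: that paths really are intervals of the cycle (using that a directed path is connected and acyclic, hence its vertex set induces a connected proper subpath of the cycle), and the off-by-one in the final count for the small values \(k=1,2\).
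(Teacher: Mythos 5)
Your proof is correct and follows essentially the same route as the paper's: the same maximum stable set \(S=\{x_3,x_5,\ldots,x_{2k+1}\}\) and the same two forced two-vertex paths at the blocking pair (the paper normalizes \(x_1\) to be a sink rather than a source, which is just the directional dual of your setup). The only difference is the final step: where you cap each remaining path on the alternating segment \(x_4\cdots x_{2k}\) at two vertices and finish by counting, the paper propagates the forcing domino-style (\(x_4x_5\) or \(x_5x_4\), then \(x_6x_7\) or \(x_7x_6\), and so on) until it collides with the already-used vertex \(x_{2k+1}\); both arguments are sound.
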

\begin{proof}
  Let \(D\) be a blocking odd cycle with \(U(D) = x_1x_2 \cdots x_{2k+ 1} x_1\).
	Suppose, without loss of generality, that \((x_1, x_2)\) is a blocking pair and  \(x_1\) is a sink.
	Let \(S = \{x_3,x_5,\ldots,x_{2k+1}\}\), and note that \(S\) is a maximum stable set of \(D\).

	We claim that there is no \(S_\text{BE}\)-path partition in \(D\).
	If \(k=1\), then \(D\) is a transitive triangle and \(S=\{x_3\}\), and the claim follows trivially.
	Thus suppose that \(k\geq 2\), and hence \(x_3 \neq x_{2k + 1}\).
	Towards a contradiction, suppose that there exists an \SBE-path partition \(\sP\) of \(D\).
 	Thus \(x_2x_3\) is a path in~\(\sP\), since this is the only path in \(D\) which contains \(x_2\) and has an end in \(S\) (see Figure~\ref{fig:blocking-odd-cycle-not-bed}).
 	By the same argument, \(x_{2k + 1}x_1\) is a path in \(\sP\).
	Since \(x_2x_3 \in \sP\) and \(\sP\) is an \SBE-path partition,  \(x_4x_5\) or \(x_5x_4\) is a path in \(\sP\), which implies that  \(x_6x_7\) or \(x_7x_6\) is a path in \(\sP\), and so on.
	In particular, this means that  \(x_{2k}x_{2k + 1}\) or \(x_{2k + 1}x_{2k}\) is a path in \(\sP\), a contradiction since \(x_{2k + 1}x_1 \in \sP\) and \(\sP\) is vertex-disjoint.
	
	Thus there is no \SBE-path partition in \(D\), and hence \(D\) does not satisfy the BE-property.
\end{proof}

\begin{figure}[h]
    \centering\scalebox{0.9}{\tikzset{middlearrow/.style={
	decoration={markings,
		mark= at position 0.7 with {\arrow{#1}},
	},
	postaction={decorate}
}}

\tikzset{digon/.style={
	decoration={markings,
		mark= at position 0.4 with {\arrow[inner sep=10pt]{<}},
		mark= at position 0.8 with {\arrow[inner sep=10pt]{>}},
	},
	postaction={decorate}
}}

\begin{tikzpicture}[scale = 0.6]

	\def\myr{3}

	\node (x1) [black vertex] at (0*40-110:\myr) {};
	\node (label_x1) [] at      (0*40-110:\myr + 0.6) {\(x_1\)};

	\node (x2) [black vertex] at (1*40-110:\myr) {};
	\node (label_x1) [] at      (1*40-110:\myr + 0.6) {\(x_2\)};

	\node (x3) [red vertex] at (2*40-110:\myr) {};
	\node (label_x1) [] at      (2*40-110:\myr + 0.6) {\(x_3\)};

	\node (x4) [black vertex] at (3*40-110:\myr) {};
	\node (label_x1) [] at      (3*40-110:\myr + 0.6) {\(x_4\)};

	\node (x5) [red vertex] at (4*40-110:\myr) {};
	\node (label_x1) [] at      (4*40-110:\myr + 0.6) {\(x_5\)};

	\node (x6) [black vertex] at (5*40-110:\myr) {};
	\node (label_x1) [] at      (5*40-110:\myr + 0.6) {\(x_6\)};

	\node (x7) [red vertex] at (6*40-110:\myr) {};
	\node (label_x1) [] at      (6*40-110:\myr + 0.6) {\(x_7\)};

	\node (x8) [black vertex] at (7*40-110:\myr) {};
	\node (label_x1) [] at      (7*40-110:\myr + 0.6) {\(x_8\)};

	\node (x9) [red vertex] at (8*40-110:\myr) {};
	\node (label_x1) [] at      (8*40-110:\myr + 0.6) {\(x_9\)};

	\draw[edge, middlearrow={>>}] (x2) -- (x1);
	\draw[edge, middlearrow={>>}] (x2) -- (x3);
	\draw[edge, middlearrow={>>}] (x9) -- (x1);
	\draw[edge] (x3) -- (x4) -- (x5) -- (x6) -- (x7) -- (x8) -- (x9);
\end{tikzpicture}

 }
    \caption[Example of a blocking odd cycle]{Example of a blocking odd cycle with \(k = 4\).
    The vertices in \(S\) are shown in red. We use an arrow with two heads from a vertex \(u\) to a vertex \(v\) to denote that the arc \(uv\) is present and the arc \(vu\) is missing in the digraph. Moreover, we join two vertices by a line (without heads) to denote that they are adjacent.}
    \label{fig:blocking-odd-cycle-not-bed}
\end{figure}

Proposition~\ref{prop:bed-blocking-cycle} led us to propose the following characterization of BE-diperfect digraphs, very similar to the one proposed by Berge to \(\alpha\)-diperfect digraphs.

\begin{conjecture}[Begin-End conjecture]\label{con:bed}
	A digraph \(D\) is BE-diperfect if, and only if, \(D\) has no blocking odd cycle as an induced subdigraph.
\end{conjecture}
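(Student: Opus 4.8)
The necessity is immediate from Proposition~\ref{prop:bed-blocking-cycle}: if \(D\) contains a blocking odd cycle \(C\) as an induced subdigraph, then \(C\) does not satisfy the BE-property, so \(D\) is not BE-diperfect. Taking the contrapositive, every BE-diperfect digraph is free of induced blocking odd cycles. The entire difficulty therefore lies in the sufficiency.

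For the sufficiency I would first reduce to a one-digraph statement. The class of digraphs with no induced blocking odd cycle is hereditary, so it suffices to prove that \emph{every} digraph \(D\) with no induced blocking odd cycle satisfies the BE-property; applying this to each induced subdigraph then shows that \(D\) is BE-diperfect. I would argue by contradiction, choosing a counterexample \(D\) that minimizes \(|V(D)|\). Thus \(D\) has no induced blocking odd cycle, some maximum stable set \(S\) admits no \SBE-path partition, and, by minimality, every proper induced subdigraph of \(D\) satisfies the BE-property. A routine first reduction shows \(D\) is connected: if \(D\) were the disjoint union of \(D_1\) and \(D_2\), then \(S \cap V(D_i)\) is a maximum stable set of \(D_i\), each \(D_i\) satisfies the BE-property by minimality, and the union of the two resulting path partitions would be an \SBE-path partition of \(D\). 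One would then try to forbid cut vertices and other small separations, since a separation ought to let one paste together \SBE-path partitions of the smaller pieces while keeping each unique \(S\)-vertex at a path end.

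The core obstacle -- and the reason this remains a conjecture rather than a theorem -- is the structural analysis of an indecomposable minimal counterexample, exactly as in the Strong Perfect Graph Theorem (Theorem~\ref{the:strong-perfect-graphs}), of which this conjecture is a close analogue. Without a decomposition theorem for blocking-odd-cycle-free digraphs, the realistic plan is to verify the conjecture on progressively larger natural subclasses -- as is done in the sections that follow for digraphs whose underlying graph is series-parallel and for in-semicomplete digraphs -- while isolating the local configurations around a vertex of \(S\) that either complete an \SBE-path partition or expose a blocking odd cycle. Concretely, starting from \(S\) one builds candidate paths greedily outward from the \(S\)-vertices; an obstruction to completing the partition should, since \(S\) is stable and maximum, trace out an alternating closed walk that can be certified to contain an induced blocking odd cycle through some blocking pair, contradicting the hypothesis. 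Making this certification work in full generality is precisely the main obstacle, and I expect it to demand either a genuine structural decomposition of blocking-odd-cycle-free digraphs or substantially new ideas.
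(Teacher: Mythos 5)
This statement is a conjecture that the paper proposes but does not prove in full: the paper establishes only the necessity (via Proposition~\ref{prop:bed-blocking-cycle}, exactly as you do) and then verifies the sufficiency for particular classes in later sections. Your proposal is correct as far as it goes and matches the paper's position precisely --- the necessity argument is the same, and your honest assessment that the sufficiency direction remains open (with only class-by-class verification available) is accurate.
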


Let us compare Conjectures~\ref{con:diperfect} and~\ref{con:bed}.
As mentioned before, every BE-diperfect digraph is an \(\alpha\)-diperfect digraph.
Moreover, it is not hard to see that the set of all BE-diperfect digraphs is properly contained in the set of all \(\alpha\)-diperfect digraphs; take for example, a transitive triangle which is \(\alpha\)-diperfect but is not \(BE\)-diperfect.
Let \(\fB\) be the set of all digraphs without an induced anti-directed odd cycle and let \(\fD\) be the set of all digraphs without an induced blocking odd cycle.
Since every anti-directed odd cycle is a blocking odd cycle, it follows that \(\fD \subset \fB\), as expected, since we can restate Conjecture~\ref{con:diperfect} as \emph{``A digraph \(D\) is \(\alpha\)-diperfect if, and only if, \(D \in \fB\)''} and Conjecture~\ref{con:bed} as \emph{``A digraph \(D\) is BE-diperfect if, and only if, \(D \in \fD\)''}. 
Note that in Conjecture~\ref{con:bed}, we aim to characterize a smaller subclass of \(\alpha\)-diperfect digraphs, but we require a larger class of forbidden subdigraphs.
So neither conjecture implies the other.

In addition to our results for Conjecture~\ref{con:diperfect}, we provide evidence to support Conjecture~\ref{con:bed} by verifying it for 
 digraphs whose underlying graphs are perfect or series-parallel, in-semicomplete digraphs, and for a small extension of symmetric digraphs.
We remark that these are all the known cases for which Conjecture~\ref{con:diperfect} has been verified for.

The remaining of this text is organized in the following way.
In Section~\ref{sec:definitions}, we present some basic definitions and notations.
In Section~\ref{sec:perfect}, we verify Conjecture~\ref{con:bed} for digraphs whose underlying graphs are perfect, since we need this results for Section~\ref{sec:structural-results}, where we present some structural results regarding Conjectures~\ref{con:diperfect} and~\ref{con:bed}.
In Section~\ref{sec:partition-lemmas}, we present two auxiliary results that allow us to split a digraph \(D\) into two proper induced subdigraphs  \(H_1\) and \(H_2\) such that \(\alpha(H) = \alpha(H_1) + \alpha(H_2)\).
In latter sections, we are going to use this fact in the induction step of our proofs.
We verify Conjectures~\ref{con:diperfect} and~\ref{con:bed} for digraphs whose underlying graphs are series-parallel in Section~\ref{sec:series-parallel}, for in-semicomplete digraphs in Section~\ref{sec:in-semicomplete}, and for a small extension of symmetric digraphs in Section~\ref{sec:k-semi-symmetric}.
Finally, in Section~\ref{sec:concluding-remarks}, we present some concluding remarks.

\section{Basic definitions}
\label{sec:definitions}

In this section, we present some basic definitions and notation.
Given a digraph (resp.\ graph)~\(G\), we denote its vertex set by \(V(G)\) and its arc set (resp.\ edge set) by~\(A(G)\) (resp.~\(E(G)\)).

Let \(G\) be a (di)graph.
A set of vertices \(W\) of \(G\) is a \emph{clique} if for every pair of vertices \(u, v \in W\) the vertices \(u\) and \(v\) are adjacent.
Given a set \(X \subseteq V(G)\), we write \(G[X]\) to denote the sub(di)graph of \(G\) induced by \(X\) and \(G - X\) to denote the (di)graph \(G[V(G) \setminus X]\).
If \(F\) is a set of edges (resp.\ arcs), we write \(G - F\) to denote the graph (resp.\ digraph) with vertex set \(V(G)\) and edge (resp.\ arc) set \(E(G) \setminus F\) (resp.\ \(A(G) \setminus F\)).
We write \(H \subseteq G\) to denote that \(H\) is a sub(di)graph of the (di)graph \(G\).

A \emph{Hamilton path} \(P\) of a (di)graph \(G\) is a path such that \(V(P) = V(G)\), and a \emph{Hamilton cycle} \(C\) of \(G\) is a cycle such that \(V(C) = V(G)\).
If \(G\) is a digraph, by path and cycle we always means directed path and directed cycle, respectively.
We say that \(G\) is \emph{hamiltonian} if it contains a Hamilton cycle.

The set of neighbors of a vertex \(v\) of a graph \(G\) is denoted by \(N_G(v)\), or simply by \(N(v)\) (here, as elsewhere, we drop the index referring to the (di)graph when this is clear from the context).
Given two vertices \(u\) and \(v\) of a digraph \(D\), we say that \(u\) is an \emph{in-neighbor} of \(v\)  if \(uv \in A(D)\), and that it is an \emph{out-neighbor} of \(v\) if \(vu \in A(D)\).
The set of \emph{in-neighbors} and \emph{out-neighbors} of a vertex \(v\) of a digraph \(D\) are denoted by \(N^-_D(v)\) and \(N^+_D(v)\), respectively.  
A vertex \(v\) is a \emph{source}  if \(N^-(v) = \emptyset\) and a \emph{sink} if \(N^+(v) = \emptyset\).

Let \(D\) be a digraph and let \(X \subset V(D)\) and \(Y \subset V(D)\) be two non-empty disjoint sets of vertices.
We write \(X \mapsto_D Y\) (resp.\ \(X \leftrightarrow_D Y\)) to denote that, for every pair of vertices \(x \in X\) and \(y \in Y\), we have \(xy \in A(D)\) and \(yx \notin A(D)\) (resp.\ \(\{xy, yx\} \subseteq A(D)\)).
When \(X = \{x\}\) or \(Y = \{y\}\) (or both), we simply write the element, as in the following examples: \(x \mapsto_D Y\), \(X \mapsto_D y\), \(x \leftrightarrow_D Y\), \(x \leftrightarrow_D y\).

A graph \(G\) is  \emph{null} if \(V(G) = E(G) = \emptyset\).
A digraph \(D\) is \emph{strong} if there exists a path from \(u\) to \(v\) for every pair of vertices \(u, v \in V(D)\).
A \emph{strong component} of \(D\) is a maximal induced subdigraph of \(D\) which is strong.
A strong component \(X\) of \(D\) is \emph{minimal} if no arc enters \(X\), that is, there exists no arc \(yx\) such that \(x \in V(X)\) and \(y \in V(D) \setminus V(X)\).

A digraph \(D\) is a \emph{tournament} if for every pair of vertices \(u, v \in V(D)\) we have either \(u \mapsto v\) or \(v \mapsto u\).
We say that \(D\) is \emph{semicomplete} if every pair of its vertices are adjacent, and we say that \(D\) is \emph{complete} if for every pair \(u, v \in V(D)\), we have \(u \leftrightarrow v\).
A digraph \(D\) is \emph{symmetric} if for every pair of adjacent vertices \(u, v \in V(D)\), we have \(u \leftrightarrow v\).
The \emph{inverse digraph} of \(D\) is the digraph with vertex set \(V(D)\) and arc set \(\{uv \: vu \in A(D)\}\).

The \emph{underlying graph} of a digraph \(D\), denoted by \(U(D)\), is the simple graph defined by \(V(U(D)) = V(D)\) and \(E(U(D)) = \{uv \: u \tand v \text{ are adjacent in } D\}\).

\section{Digraphs whose underlying graphs are perfect}
\label{sec:perfect}

In this section, we verify Conjecture~\ref{con:bed} for digraphs whose underlying graphs are perfect.
A graph~\(G\) is \emph{perfect} if, for every induced subgraph \(H\) of \(G\), the chromatic number of~\(H\) is equals to the size of its largest clique.
Our proof follows the same strategy as the proof of Conjecture~\ref{con:diperfect} for this class of digraphs provided by Berge~\cite{Berge1982b}, which we present next.

Let \(D\) be a digraph whose underlying graph is perfect.
Since perfection is an hereditary property for induced subgraphs, i.e., every induced subgraph of a perfect graph is also perfect, to show that \(D\) is \(\alpha\)-diperfect, we only need to show that \(D\) satisfy the \(\alpha\)-property.
A \emph{clique partition} \(\sQ\) of \(D\) is a partition of \(V(D)\) such that each \(Q \in \sQ\) is a clique.
Let \(S\) be a maximum stable set of \(D\) and let \(\sC\) be a clique partition of \(D\) with the smallest size.
From the definition of perfect graphs and by Theorem~\ref{the:weak-perfect}, it follows that the stability number of \(D\) is equal to the smallest size of a clique partition of \(D\), and hence \(|S| = |\sC|\).
Thus, for every clique \(C \in \sC\), it follows that \(|C \cap S| = 1\) and, by Theorem~\ref{the:redei}, there exists a Hamilton path \(P_C\) in~\(D[C]\).
Therefore,  \(\{P_C \:C \in \sC\}\) is a \(S\)-path partition of \(D\), and hence the digraph \(D\) satisfy the \(\alpha\)-property.

\begin{theorem}[Lovász, 1972~\cite{Lovasz72}]\label{the:weak-perfect}
  A graph is perfect if and only if its complement is perfect.
\end{theorem}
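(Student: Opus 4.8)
The statement is the Weak Perfect Graph Theorem, a classical result, so the plan is to reproduce a standard proof rather than to invent a new one. The cleanest route I would take is through \emph{Lovász's characterization}: a graph \(G\) is perfect if and only if every induced subgraph \(H\) satisfies \(|V(H)| \le \alpha(H)\,\omega(H)\), where \(\omega(H)\) denotes the size of a largest clique of \(H\). The virtue of this reformulation is that the inequality is manifestly invariant under complementation: since \(\alpha(\bH) = \omega(H)\), \(\omega(\bH) = \alpha(H)\), \(|V(\bH)| = |V(H)|\), and the induced subgraphs of \(\bG\) are exactly the complements of the induced subgraphs of \(G\), the condition ``every induced subgraph \(H\) of \(G\) satisfies \(|V(H)| \le \alpha(H)\omega(H)\)'' holds for \(G\) precisely when it holds for \(\bG\). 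Hence once the characterization is established, \(G\) is perfect if and only if \(\bG\) is perfect, which is the theorem.

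It therefore remains to prove Lovász's characterization. The forward implication is immediate: if \(G\) is perfect and \(H\) is induced, then \(H\) is perfect, so \(H\) admits a proper colouring with \(\omega(H)\) colour classes, each a stable set of size at most \(\alpha(H)\), giving \(|V(H)| \le \omega(H)\alpha(H)\). For the converse I would argue by contradiction using a \emph{minimal imperfect} graph \(G\): one that is not perfect but all of whose proper induced subgraphs are perfect. By minimality \(\chi(G) > \omega(G)\) while every proper induced subgraph \(H\) satisfies \(\chi(H) = \omega(H)\). Writing \(\alpha = \alpha(G)\), \(\omega = \omega(G)\), and \(n = |V(G)|\), the goal is to show \(n \ge \alpha\omega + 1\), contradicting the assumed inequality \(n \le \alpha\omega\).

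To produce this bound I would build a family of \(\alpha\omega + 1\) stable sets \(A_0, \dots, A_{\alpha\omega}\) and \(\alpha\omega + 1\) cliques \(B_0, \dots, B_{\alpha\omega}\) with \(|A_i \cap B_j| = 1 - \delta_{ij}\). Start from a maximum stable set \(A_0\); for each \(v \in A_0\) the graph \(G - v\) is perfect, hence \(\omega\)-colourable, and its \(\omega\) colour classes are stable sets. Collecting \(A_0\) together with all these classes yields exactly \(\alpha\omega + 1\) stable sets, and a short count shows that every vertex lies in precisely \(\alpha\) of them. For the cliques I would first record the lemma that any nonempty stable set \(A\) of \(G\) satisfies \(\omega(G - A) = \omega\) (otherwise \(G - A\) would be \((\omega-1)\)-colourable, and adjoining \(A\) as one more colour class would give \(\chi(G) \le \omega\)); thus each \(G - A_i\) contains a clique \(B_i\) of size \(\omega\) disjoint from \(A_i\). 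Double counting the incidences between a fixed \(B_j\) and the \(A_i\) then forces \(|B_j \cap A_i| = 1\) for every \(i \ne j\): each of the \(\omega\) vertices of \(B_j\) lies in exactly \(\alpha\) of the \(A_i\), so \(\sum_i |B_j \cap A_i| = \alpha\omega\), while \(|B_j \cap A_i| \le 1\) (a clique meets a stable set at most once) and \(|B_j \cap A_j| = 0\), leaving all \(\alpha\omega\) remaining intersections equal to \(1\). The incidence matrix \(M\) with \(M_{ij} = |A_i \cap B_j|\) is then \(J - I\) of order \(\alpha\omega + 1\), which is nonsingular; since \(M\) factors as \(A^{\mathsf T} B\) through the \(n\)-dimensional incidence vectors of the \(A_i\) and \(B_j\), its rank \(\alpha\omega + 1\) forces \(n \ge \alpha\omega + 1\), the desired contradiction.

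The main obstacle is this converse direction — in particular verifying that each constructed clique meets every \emph{other} stable set exactly once, which is precisely what the double-counting argument secures, together with the auxiliary lemma that deleting a stable set from a minimal imperfect graph does not lower the clique number. The complementation symmetry exploited in the first paragraph is then essentially free.
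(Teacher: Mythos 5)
The paper does not prove this statement at all: it is the Weak Perfect Graph Theorem, imported as a black box with a citation to Lov\'asz (1972) and used only to equate the stability number with the minimum size of a clique partition in a perfect underlying graph. So there is no in-paper argument to compare yours against. Your proposal is a correct rendition of the standard proof: the reduction to Lov\'asz's self-complementary characterization \(|V(H)| \le \alpha(H)\,\omega(H)\) is sound, the forward direction is trivial as you say, and the converse via a minimal imperfect graph --- the \(\alpha\omega+1\) stable sets with each vertex in exactly \(\alpha\) of them, the lemma that deleting a nonempty stable set preserves \(\omega\), the double count forcing the incidence matrix to be \(J-I\), and the rank bound \(n \ge \alpha\omega+1\) --- is the Lov\'asz/Gasparian argument, correctly assembled. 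No gaps.
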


\begin{theorem}[Rédei, 1934~\cite{Redei1934}]\label{the:redei}
	Every tournament has a Hamilton path.
\end{theorem}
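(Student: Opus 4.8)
The plan is to prove R\'edei's theorem by induction on the order $n = |V(T)|$ of the tournament $T$. The base case $n = 1$ is immediate, since a single vertex is a (trivial) Hamilton path. For the inductive step I would delete an arbitrary vertex $v$ from $T$; the remaining digraph $T - v$ is again a tournament, so by the induction hypothesis it admits a Hamilton path $P = v_1 v_2 \cdots v_{n-1}$. It then suffices to splice $v$ into $P$ so as to obtain a Hamilton path of $T$, exploiting throughout that in a tournament every pair of distinct vertices is joined by exactly one arc, so that precisely one of $u \mapsto w$ or $w \mapsto u$ holds for each pair $u, w$.

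The insertion splits into three cases. If $v \mapsto v_1$, then $v v_1 v_2 \cdots v_{n-1}$ is the desired Hamilton path; symmetrically, if $v_{n-1} \mapsto v$, then $v_1 \cdots v_{n-1} v$ works. Otherwise we must have $v_1 \mapsto v$ and $v \mapsto v_{n-1}$, and here I would track the orientation of the arc between $v$ and $v_i$ as $i$ runs from $1$ to $n-1$: at $i = 1$ the arc points towards $v$, while at $i = n-1$ it points away from $v$, so there must be an index $i$ with $v_i \mapsto v$ and $v \mapsto v_{i+1}$. Inserting $v$ between $v_i$ and $v_{i+1}$ then yields the Hamilton path $v_1 \cdots v_i v v_{i+1} \cdots v_{n-1}$, completing the induction.

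An essentially equivalent but perhaps cleaner route avoids explicit induction: take a path $P = v_1 \cdots v_k$ of maximum length in $T$ and suppose, for a contradiction, that some vertex $v$ lies off $P$. Maximality forbids extending $P$ at either end, which forces $v_1 \mapsto v$ and $v \mapsto v_k$, and the same discrete intermediate-value argument then produces a consecutive pair $v_i, v_{i+1}$ admitting $v$ between them, contradicting the maximality of $P$; hence $P$ must be Hamilton. I do not expect a genuine obstacle here, as the whole statement is elementary: the only point requiring any care is the `flip' step, which is a one-line consequence of the fact that a $\{0,1\}$-valued sequence starting at one value and ending at the other must change value at some consecutive index. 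I would write out the insertion argument in full, since it is short and entirely self-contained.
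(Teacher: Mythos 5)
Your argument is correct and complete: both the inductive insertion proof and the maximal-path variant are the standard proofs of R\'edei's theorem, and the ``flip'' step you isolate is indeed the only point needing care. Note, however, that the paper does not prove this statement at all --- it cites it as a classical result of R\'edei (1934) and uses it as a black box (e.g.\ to extract Hamilton paths inside cliques in the perfect-underlying-graph argument) --- so there is no proof in the paper to compare against; your write-up would simply be supplying the omitted classical argument, and it does so correctly.
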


The only point in our proof for Conjecture~\ref{con:bed} which differs from Berge's proof for Conjecture~\ref{con:diperfect} is that we cannot use Theorem~\ref{the:redei}, because Conjecture~\ref{con:bed} requires the paths starting or ending in the stable set.
Therefore, we prove something stronger than Theorem~\ref{the:redei} for semicomplete digraphs in~\(\fD\).
More precisely, we prove that, for every semicomplete digraph \(D \in \fD\) (i.e., a semicomplete digraph without transitive triangles), there exists a Hamilton path starting or ending in every vertex \(v \in V(D)\).
The next two results describes the structure of semicomplete digraphs without transitive triangles.

We say that a path \(P = x_1x_2 \cdots x_\ell\) is an \emph{\(\{s, t\}\)-path} if \(\{x_1, x_\ell\} = \{s, t\}\).

\begin{figure}
	\centering\scalebox{0.9}{\tikzset{middlearrow/.style={
	decoration={markings,
		mark= at position 0.6 with {\arrow{#1}},
	},
	postaction={decorate}
}}

\tikzset{shortdigon/.style={
	decoration={markings,
		mark= at position 0.5 with {\arrow[inner sep=10pt]{<}},
		mark= at position 0.7 with {\arrow[inner sep=10pt]{>}},
	},
	postaction={decorate}
}}

\tikzset{digon/.style={
	decoration={markings,
		mark= at position 0.40 with {\arrow[inner sep=10pt]{<}},
		mark= at position 0.7 with {\arrow[inner sep=10pt]{>}},
	},
	postaction={decorate}
}}

\tikzset{digon2/.style={
	decoration={markings,
		mark= at position 0.40 with {\arrow[inner sep=10pt]{<}},
		mark= at position 0.65 with {\arrow[inner sep=10pt]{>}},
	},
	postaction={decorate}
}}

\begin{tikzpicture}[scale = 0.6]

	\node (x) [] at (0,0) {};
	\node (a) [black vertex] at ($(x)+(180:2cm)$) {};
	\node (b) [black vertex] at ($(x)+(90:2cm)$) {};
	\node (c) [black vertex] at ($(x)+(0:2cm)$) {};
	\node (d) [black vertex] at ($(x)+(-90:2cm)$) {};
	
	\draw[edge, middlearrow={>}] (b) -- (a);
	\draw[edge, middlearrow={>}] (a) -- (d);
	\draw[edge, middlearrow={>}] (d) -- (c);
	\draw[edge, middlearrow={>}] (c) -- (b);

	\draw[edge, digon] (a) -- (c);
	\draw[edge, digon2] (b) to [in=0, out=0, looseness=2] (d);

\end{tikzpicture}

 }
	\caption{
    The only strong semicomplete digraph free of induced transitive triangles which does not contain a Hamilton \(\{s,t\}\)-path for every  pair of vertices \(s,t\).
    We use an arrow with two heads in opposite directions joining vertices \(u\) and \(v\) to denote that both arcs \(uv\) and \(vu\) belongs to the digraph.}\label{fig:k4-bad}
\end{figure}

\begin{lemma}\label{lem:semicomplete-strong-characterization}
	Let $D$ be a strong semicomplete digraph without induced transitive triangles.
	If \(D\) is not isomorphic to the digraph in Figure~\ref{fig:k4-bad}, then  there exists a Hamilton \(\{s,t\}\)-path for every pair of vertices \(s, t \in V(D)\).
\end{lemma}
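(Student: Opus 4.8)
The plan is to argue by induction on $n = |V(D)|$, reducing to a smaller strong semicomplete digraph by deleting a well-chosen vertex and reinserting it into a Hamilton $\{s,t\}$-path supplied by the induction hypothesis. Before starting the induction I would record the basic structural consequence of forbidding induced transitive triangles: in a semicomplete digraph, if $u \mapsto v$ and $v \mapsto w$ are both \emph{strict} arcs (meaning $vu, wv \notin A(D)$), then the arc from $u$ to $w$ cannot be strict, since otherwise $D[\{u,v,w\}]$ would be a transitive triangle. In particular, for every vertex $v$, the set of strict out-neighbours of $v$ and the set of strict in-neighbours of $v$ each induce a complete (all-digon) subdigraph. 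These observations are what make the insertion steps below go through, and they also formalize the intuition that, outside of digons, $D$ carries very few strict arcs.

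The base cases are $n \le 3$, checked directly: for $n \le 2$ strongness forces a digon, and for $n = 3$ the only strong semicomplete digraphs without a transitive triangle — the directed triangle, possibly with digons added — admit a Hamilton $\{s,t\}$-path for each of the three pairs. For the inductive step with $n \ge 4$, I would first locate a vertex $w \in V(D) \setminus \{s,t\}$ with $D - w$ still strong; for semicomplete digraphs such a non-separating vertex exists once $n$ is large enough, and the finitely many situations in which only $s$ and $t$ are non-separating force $n$ small and are finished by inspection. Deleting $w$ preserves all hypotheses, so unless $D - w$ is the digraph of Figure~\ref{fig:k4-bad} — which forces $n = 5$ and is handled by hand — the induction hypothesis yields a Hamilton $\{s,t\}$-path $P = p_1 p_2 \cdots p_{n-1}$ of $D - w$.

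It then remains to insert $w$ into the interior of $P$ (inserting at an end is forbidden, since it would move an endpoint off $\{s,t\}$). Insertion between $p_i$ and $p_{i+1}$ is possible exactly when $p_i \mapsto w$ and $w \mapsto p_{i+1}$, so a total failure forces the indicator property ``$p_i \mapsto w$'' to be nondecreasing along $P$: there is a threshold $m$ with $w \mapsto p_1, \ldots, w \mapsto p_m$ all strict and $p_{m+1} \mapsto w, \ldots, p_{n-1} \mapsto w$. I would then examine the boundary triple $\{w, p_m, p_{m+1}\}$ together with the path arc $p_m \mapsto p_{m+1}$ and invoke the structural fact above: the forbidden transitive triangle eliminates almost all local configurations, so either I reach a contradiction (whence the insertion in fact succeeds) or the local structure is pinned down so tightly that, combined with strongness, $D$ is forced to be the exceptional digraph of Figure~\ref{fig:k4-bad}. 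To gain extra room I would also use the two remaining freedoms: the Hamilton path may run from $s$ to $t$ or from $t$ to $s$, and there is typically more than one admissible deletable vertex $w$ and more than one Hamilton $\{s,t\}$-path in $D - w$.

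The main obstacle is precisely this boundary analysis: excluding the insertion-failure pattern without inadvertently excluding the genuine exception. The subtlety is that digons impose no transitive-triangle constraint, so all the rigidity I can exploit comes from strict arcs alone; I must therefore show that a persistent insertion failure across \emph{every} admissible choice of $w$, of path orientation, and of Hamilton $\{s,t\}$-path can occur only for the four-vertex digraph of Figure~\ref{fig:k4-bad}. Establishing that this single configuration is the unique survivor, and treating the low-order cases $n \in \{4,5\}$ where the induction has almost no slack, is where essentially all of the effort lies.
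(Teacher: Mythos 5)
Your proposal is an outline whose two load-bearing steps are both left unproved, and each conceals a real difficulty. First, the existence of a vertex \(w \in V(D) \setminus \{s,t\}\) with \(D - w\) strong is not a standard fact and is false for strong semicomplete digraphs in general: the tournament on \(\{1,\dots,n\}\) with \(i \to j\) iff \(j = i+1\) or \(j \le i-2\) is strong and has exactly two non-separating vertices for every \(n\). So any proof of your claim must exploit the absence of transitive triangles, and you give no such argument, nor any bound on how small ``\(n\) small'' is. The inspection cases are moreover nonempty: the digraph on \(\{a_1,a_2,b,w\}\) with \(a_1 \leftrightarrow a_2\), \(a_i \mapsto b\), \(b \mapsto w\), \(w \mapsto a_i\) is strong, semicomplete, transitive-triangle-free, not the exception of Figure~\ref{fig:k4-bad}, and its only non-separating vertices are \(a_1, a_2\); with \(s=a_1, t=a_2\) your reduction has no vertex to delete. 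Second, the insertion analysis is both slightly wrong and essentially deferred. Total failure of insertion says that for every \(i\) either \(p_i w \notin A(D)\) or \(w p_{i+1} \notin A(D)\); this forces a threshold only up to one unconstrained boundary vertex \(p_{j-1}\) that may form a digon with \(w\), a case your ``nondecreasing indicator'' description misses. More importantly, converting ``insertion fails for every admissible \(w\), every orientation, and every Hamilton \(\{s,t\}\)-path of \(D-w\)'' into ``\(D\) is the exceptional digraph'' is the entire content of the lemma, and you explicitly acknowledge that this is where all the effort lies without carrying it out. As written, the proposal cannot be checked for correctness because the part that could fail has not been attempted.

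For comparison, the paper's proof sidesteps both obstacles: it takes a \emph{longest} \(\{s,t\}\)-path \(P\) in \(D\) itself (one exists since \(s\) and \(t\) are adjacent), partitions the uncovered vertices into those dominated by all of \(V(P)\), those dominating all of \(V(P)\), and the rest \(B^*\), and uses the forbidden transitive triangle together with the maximality of \(P\) to force, around a vertex \(u \in B^*\) and the extremal index \(k = \max\{i : uv_i \in A(D)\}\), a rigid local structure that collapses \(D\) onto the four-vertex exception. No deletion, no preservation of strong connectivity, and no induction are needed. If you want to salvage your plan, you would first have to prove a non-separating-vertex lemma for transitive-triangle-free strong semicomplete digraphs and then do the full boundary case analysis; at that point the longest-path argument is shorter.
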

\begin{proof}
	Let \(D\) be a digraph as in the statement, let \(s\) and \(t\) be two vertices of \(D\), and suppose that there is no Hamilton \(\{s,t\}\)-path in \(D\).
	We are going to show that \(D\) is isomorphic to the digraph depicted in Figure~\ref{fig:k4-bad}.
	Since \(D\) is semicomplete, \(s\) and \(t\) are adjacent, and hence there exists an \(\{s,t\}\)-path in \(D\).
	Let $P = v_1v_2 \cdots v_\ell$ be a longest \(\{s,t\}\)-path in \(D\).
	Let $B = V(D) \setminus V(P)$, and note that \(B \neq \emptyset\), since \(P\) is not a Hamilton path.
	Let \(B'= \{u \in B \: V(P) \mapsto u\}\), $B'' = \{u \in B \: u \mapsto V(P)\}$, and $B^* = B \setminus (B' \cup B'')$.
	Clearly \(B', B'', B^*\) are pairwise disjoint.

	\newcommand{\DD}[1]{\textcolor{magenta}{#1}}

	Towards a contradiction, suppose that \(B^* = \emptyset\).
	Since \(D\) is strong and \(B \neq \emptyset\), we have \(B' \neq \emptyset\)  and \(B'' \neq \emptyset\), and hence let \(u \in B'\) and \(w \in B''\).
	Since \(w \mapsto v_1\), \(v_1 \mapsto u\), and \(D[\{u, v_1, w\}]\) is not a transitive triangle, \(uw \in A(D)\), and hence \(v_1uwv_2 v_3 \cdots v_\ell\) is an \(\{s, t\}\)-path longer than~\(P\) (see Figure~\ref{fig:m1}), a contradiction.
	Therefore, \(B^* \neq \emptyset\), and let \(u\) be a vertex in \(B^*\) and let $k = \max{\Set{i: uv_i \in A(D)}}$.
	Note that \(v_j \mapsto u\) for every \(j > k\), and hence \(v_jv_{j - 1} \in A(D)\) for every \(j > k + 1\), otherwise \(D[\{v_{j -1}, v_j, u\}]\) would be a transitive triangle.
	
	Now, we claim that \(u \mapsto v_j\) for every \(j < k\).
	The proof follows by induction on $q = k - j$.
	If $q = 1$, then $j = k - 1$.
	If $v_{k -1}u \in A(D)$, then \(v_1v_2 \cdots v_{k - 1}uv_kv_{k + 1} \cdots v_\ell\) is an $\{s, t\}$-path longer than \(P\), a contradiction.
	Thus, \(u \mapsto v_{k - 1}\).
	Consider now $q > 1$.
	By the induction hypothesis, $u \mapsto v_{j + 1}$.
	If $v_ju \in A(D)$ then \(v_1v_2 \cdots v_juv_{j + 1}v_{j + 2} \cdots v_\ell\) is an $\{s, t\}$-path longer than \(P\), a contradiction.
	Therefore, \(u \mapsto v_j\) and the claim follows.
	Thus, \(v_jv_{j -1} \in A(D)\) for every \(j < k\), otherwise \(D[\{v_{j - 1}, v_j, u\}]\) would be a transitive triangle.

	We may assume that \(k  > 1\), otherwise \(k = 1\) and \(v_\ell v_{\ell - 1} \cdots v_{k + 1} u v_k\) is an \(\{s, t\}\)-path longer than \(P\) (see Figure~\ref{fig:may-2}), a contradiction.
	Also, we may assume that \(k < \ell\), otherwise \(k = \ell\) and, since \(u \mapsto v_j\) for every \(j < k\) and \(u \in B^*\), we have \(v_ku \in A(D)\), and therefore \(v_kuv_{k - 1}v_{k - 2} \cdots v_1\) is an \(\{s, t\}\)-path longer than \(P\) (see Figure~\ref{fig:m3}), a contradiction.
	Therefore, the vertices \(v_{k - 1}\) and \(v_{k + 1}\) exist.

	\begin{figure}
		\begin{subfigure}{0.4\textwidth}
			\centering\scalebox{0.8}{\tikzset{middlearrow/.style={
	decoration={markings,
		mark= at position 0.6 with {\arrow{#1}},
	},
	postaction={decorate}
}}

\tikzset{digon/.style={
	decoration={markings,
		mark= at position 0.47 with {\arrow[inner sep=10pt]{<}},
		mark= at position 0.54 with {\arrow[inner sep=10pt]{>}},
	},
	postaction={decorate}
}}

\begin{tikzpicture}[scale = 0.6]

	\def \n{10}
	\def \radiusrate {5};
	\def \theradius {1.3cm};

	\begin{scope}[yshift=3cm]
		\foreach \i in {1,...,\n} {
		  \coordinate (A \i) at
		  ($(0,0)!{1 + .2*rand}!({\i*360/\n}:\theradius)$);
		};

		\draw [fill=setfilling, draw=setborder] plot [smooth cycle] coordinates
		{(A 1) (A 2) (A 3) (A 4) (A 5) (A 6) (A 7) (A 8) (A 9) (A 10)};
		
		\node (label_set) [] at (140:2cm) {$B''$};
	\end{scope}

	\begin{scope}[yshift=-3cm]
		\foreach \i in {1,...,\n} {
		  \coordinate (A \i) at
		  ($(0,0)!{1 + .2*rand}!({\i*360/\n}:\theradius)$);
		};

		\draw [fill=setfilling, draw=setborder] plot [smooth cycle] coordinates
		{(A 1) (A 2) (A 3) (A 4) (A 5) (A 6) (A 7) (A 8) (A 9) (A 10)};
		
		\node (label_set) [] at (-140:2cm) {$B'$};
	\end{scope}

	\node (v3) [black vertex] at (0,0) {};
	\node (v2) [black vertex] at ($(v3)+(180:2cm)$) {};
	\node (v1) [black vertex] at ($(v2)+(180:2cm)$) {};
	\node (vl) [black vertex] at ($(v3)+(0:4cm)$) {};
	
	\node (w) [black vertex] at ($(v3)+(90:3cm)$) {};
	\node (u) [black vertex] at ($(v3)+(-90:3cm)$) {};
	
	\node (label_v1) [] at ($(v1)+(150:.6)$) {$v_1$};
	\node (label_v2) [] at ($(v2)+(150:.6)$) {$v_2$};
	\node (label_v3) [] at ($(v3)+(150:.6)$) {$v_3$};
	\node (label_vl) [] at ($(vl)+(90:.6)$) {$v_\ell$};
	\node (label_w) [] at ($(w)+(90:.6)$) {$w$};
	\node (label_u) [] at ($(u)+(-90:.6)$) {$u$};
	
	\draw[edge, color1, middlearrow={>>}] (v1) -- (u);
	\draw[edge, middlearrow={>>}] (v2) -- (u);
	\draw[edge, middlearrow={>>}] (v3) -- (u);
	\draw[edge, middlearrow={>>}] (vl) -- (u);

	\draw[edge, middlearrow={>>}] (w) -- (v1);
	\draw[edge, color1, middlearrow={>>}] (w) -- (v2);
	\draw[edge, middlearrow={>>}] (w) -- (v3);
	\draw[edge, middlearrow={>>}] (w) -- (vl);

	\draw[edge, middlearrow={>}] (v1) -- (v2);
	\draw[edge, color1, middlearrow={>}] (v2) -- (v3);
	
	\draw[edge, color1, middlearrow={>}] (u) to [in=-180, out=180, looseness=3] (w);

	\draw[edge, color1, fixed point arithmetic, decoration={snake}, segment length=10mm, decorate, postaction={decoration={markings,mark=at position 0.45 with {\arrow{>}}},decorate}] (v3) -- (vl);
\end{tikzpicture}

 }
			\caption{}\label{fig:m1}
		\end{subfigure}
    \hfill
		\begin{subfigure}{0.4\textwidth}
			\centering\scalebox{0.8}{\tikzset{middlearrow/.style={
	decoration={markings,
		mark= at position 0.6 with {\arrow{#1}},
	},
	postaction={decorate}
}}

\tikzset{digon/.style={
	decoration={markings,
		mark= at position 0.4 with {\arrow[inner sep=10pt]{<}},
		mark= at position 0.8 with {\arrow[inner sep=10pt]{>}},
	},
	postaction={decorate}
}}

\begin{tikzpicture}[scale = 0.6]

	\def \n{10}
	\def \radiusrate {5};
	\def \theradius {1.3cm};

	\begin{scope}[yshift=-3cm]
		\foreach \i in {1,...,\n} {
		  \coordinate (A \i) at
		  ($(0,0)!{1 + .2*rand}!({\i*360/\n}:\theradius)$);
		};

		\draw [fill=setfilling, draw=setborder] plot [smooth cycle] coordinates
		{(A 1) (A 2) (A 3) (A 4) (A 5) (A 6) (A 7) (A 8) (A 9) (A 10)};
		
		\node (label_set) [] at (-140:2cm) {$B^*$};
	\end{scope}

	\node (v3) [black vertex] at (0,0) {};
	\node (v2) [black vertex] at ($(v3)+(180:2cm)$) {};
	\node (v1) [black vertex] at ($(v2)+(180:2cm)$) {};
	\node (vl) [black vertex] at ($(v3)+(0:4cm)$) {};
	
	\node (u) [black vertex] at ($(v3)+(-90:3cm)$) {};

	\node (label_v1) [] at ($(v1)+(90:.6)$) {$v_1 = v_k$};
	\node (label_v2) [] at ($(v2)+(90:.6)$) {$v_{k + 1}$};
	\node (label_v3) [] at ($(v3)+(90:.6)$) {$v_{k + 2}$};
	\node (label_vl) [] at ($(vl)+(90:.6)$) {$v_\ell$};
	
	\node (label_u) [] at ($(u)+(-90:.6)$) {$u$};
	
	\draw[edge, color1, middlearrow={>}] (u) -- (v1);
	\draw[edge, color1, middlearrow={>>}] (v2) -- (u);
	\draw[edge, middlearrow={>>}] (v3) -- (u);
	\draw[edge, middlearrow={>>}] (vl) -- (u);

	\draw[edge, middlearrow={>}] (v1) -- (v2);
	\draw[edge, color1, digon] (v2) -- (v3);

	\draw[edge, 
		color1,
		fixed point arithmetic, 
		decoration={snake}, 
		segment length=10mm, 
		decorate, 
		postaction={
			decoration={
				markings,
				mark=at position 0.45 with {\arrow{<}},
				mark=at position 0.6 with {\arrow{>}}},
				decorate}
	] (v3) -- (vl);
\end{tikzpicture}

 }
			\caption{}\label{fig:may-2}
		\end{subfigure}
	
		\begin{subfigure}{0.4\textwidth}
			\centering\scalebox{0.8}{\tikzset{middlearrow/.style={
	decoration={markings,
		mark= at position 0.6 with {\arrow{#1}},
	},
	postaction={decorate}
}}

\tikzset{shortdigon/.style={
	decoration={markings,
		mark= at position 0.45 with {\arrow[inner sep=10pt]{<}},
		mark= at position 0.75 with {\arrow[inner sep=10pt]{>}},
	},
	postaction={decorate}
}}

\tikzset{digon/.style={
	decoration={markings,
		mark= at position 0.47 with {\arrow[inner sep=10pt]{<}},
		mark= at position 0.54 with {\arrow[inner sep=10pt]{>}},
	},
	postaction={decorate}
}}

\begin{tikzpicture}[scale = 0.6]

	\def \n{10}
	\def \radiusrate {5};
	\def \theradius {1.3cm};

	\begin{scope}[yshift=-3cm]
		\foreach \i in {1,...,\n} {
		  \coordinate (A \i) at
		  ($(0,0)!{1 + .2*rand}!({\i*360/\n}:\theradius)$);
		};

		\draw [fill=setfilling, draw=setborder] plot [smooth cycle] coordinates
		{(A 1) (A 2) (A 3) (A 4) (A 5) (A 6) (A 7) (A 8) (A 9) (A 10)};
		
		\node (label_set) [] at (-140:2cm) {$B^*$};
	\end{scope}

	\node (vk3) [black vertex] at (0,0) {};
	\node (vk2) [black vertex] at ($(vk3)+(0:2cm)$) {};
	\node (vk1) [black vertex] at ($(vk2)+(0:2cm)$) {};
	\node (vk) [black vertex] at ($(vk1)+(0:2cm)$) {};
	\node (v1) [black vertex] at ($(vk3)+(180:4cm)$) {};
	
	\node (u) [black vertex] at ($(vk3)+(-90:3cm)$) {};
	
	\node (label_vk3) [] at ($(vk3)+(90:.6)$) {$v_{k-3}$};
	\node (label_vk2) [] at ($(vk2)+(90:.6)$) {$v_{k-2}$};
	\node (label_vk1) [] at ($(vk1)+(90:.6)$) {$v_{k-1}$};
	\node (label_vk) [] at ($(vk)+(90:.6)$) {$v_k$};
	\node (label_v1) [] at ($(v1)+(90:.6)$) {$v_1$};
	\node (label_u) [] at ($(u)+(-90:.6)$) {$u$};

	\draw[edge, middlearrow={>>}] (u) -- (v1);
	\draw[edge, middlearrow={>>}] (u) -- (vk3);
	\draw[edge, middlearrow={>>}] (u) -- (vk2);
	\draw[edge, color1, middlearrow={>>}] (u) -- (vk1);
	\draw[edge, color1, digon] (u) -- (vk);

	\draw[edge] (vk1) -- (vk);
	\draw[edge, color1, shortdigon] (vk2) -- (vk1);
	\draw[edge, color1, shortdigon] (vk3) -- (vk2);

	\draw[edge, 
		color1, 
		fixed point arithmetic, 
		decoration={snake}, 
		segment length=10mm, 
		decorate, 
		postaction={
			decoration={markings,
				mark=at position 0.42 with {\arrow{<}},
				mark=at position 0.57 with {\arrow{>}}
			},
			decorate}] (v1) -- (vk3);

\end{tikzpicture}

 }
			\caption{}\label{fig:m3}
		\end{subfigure}
    \hfill
		\begin{subfigure}{0.5\textwidth}
			\centering\scalebox{0.8}{\tikzset{middlearrow/.style={
	decoration={markings,
		mark= at position 0.6 with {\arrow{#1}},
	},
	postaction={decorate}
}}

\tikzset{shortdigon/.style={
	decoration={markings,
		mark= at position 0.45 with {\arrow[inner sep=10pt]{<}},
		mark= at position 0.75 with {\arrow[inner sep=10pt]{>}},
	},
	postaction={decorate}
}}

\tikzset{digon/.style={
	decoration={markings,
		mark= at position 0.47 with {\arrow[inner sep=10pt]{<}},
		mark= at position 0.54 with {\arrow[inner sep=10pt]{>}},
	},
	postaction={decorate}
}}

\begin{tikzpicture}[scale = 0.6]

	\def \n{10}
	\def \radiusrate {5};
	\def \theradius {1.3cm};

	\begin{scope}[yshift=-3cm]
		\foreach \i in {1,...,\n} {
		  \coordinate (A \i) at
		  ($(0,0)!{1 + .2*rand}!({\i*360/\n}:\theradius)$);
		};

		\draw [fill=setfilling, draw=setborder] plot [smooth cycle] coordinates
		{(A 1) (A 2) (A 3) (A 4) (A 5) (A 6) (A 7) (A 8) (A 9) (A 10)};
		
		\node (label_set) [] at (-140:2cm) {$B^*$};
	\end{scope}

	\node (vk) [black vertex] at (0,0) {};
	\node (vk+1) [black vertex] at ($(vk)+(0:2cm)$) {};
	\node (vk+2) [black vertex] at ($(vk+1)+(0:2cm)$) {};
	\node (vl) [black vertex] at ($(vk)+(0:7cm)$) {};
	\node (vk-1) [black vertex] at ($(vk)+(180:2cm)$) {};
	\node (vk-2) [black vertex] at ($(vk-1)+(180:2cm)$) {};
	\node (v1) [black vertex] at ($(vk)+(180:7cm)$) {};
	
	\node (u) [black vertex] at ($(vk)+(-90:3cm)$) {};
	
	\node (label_vk) [] at ($(vk)+(90:.6)$) {$v_k$};
	\node (label_vk+1) [] at ($(vk+1)+(90:.6)$) {$v_{k+1}$};
	\node (label_vk+2) [] at ($(vk+2)+(90:.6)$) {$v_{k+2}$};
	\node (label_vl) [] at ($(vl)+(90:.6)$) {$v_\ell$};
	\node (label_vk-1) [] at ($(vk-1)+(90:.6)$) {$v_{k-1}$};
	\node (label_vk-2) [] at ($(vk-2)+(90:.6)$) {$v_{k-2}$};
	\node (label_v1) [] at ($(v1)+(90:.6)$) {$v_1$};
	\node (label_u) [] at ($(u)+(-90:.6)$) {$u$};

	\draw[edge, middlearrow={>>}] (u) -- (v1);
	\draw[edge, middlearrow={>>}] (u) -- (vk-2);
	\draw[edge, middlearrow={>>}] (u) -- (vk-1);
	\draw[edge, color1, middlearrow={>}] (u) -- (vk);
	\draw[edge, color1, middlearrow={>>}] (vk+1) -- (u);
	\draw[edge, middlearrow={>>}] (vk+2) -- (u);
	\draw[edge, middlearrow={>>}] (vl) -- (u);

	\draw[edge, color1, shortdigon] (vk-2) -- (vk-1);
	\draw[edge, color1, shortdigon] (vk-1) -- (vk);
	\draw[edge, middlearrow={>}] (vk) -- (vk+1);
	\draw[edge, color1, shortdigon] (vk+1) -- (vk+2);

	\draw[edge, 
		color1, 
		fixed point arithmetic, 
		decoration={snake}, 
		segment length=10mm, 
		decorate, 
		postaction={
			decoration={markings,
				mark=at position 0.5 with {\arrow{<}},
				mark=at position 0.7 with {\arrow{>}}
			},
			decorate}] (v1) -- (vk-2);

	\draw[edge, 
		color1, 
		fixed point arithmetic, 
		decoration={snake}, 
		segment length=10mm, 
		decorate, 
		postaction={
			decoration={markings,
				mark=at position 0.4 with {\arrow{<}},
				mark=at position 0.6 with {\arrow{>}}
			},
			decorate}] (vk+2) -- (vl);

\end{tikzpicture}

 }
			\caption{}\label{fig:m4}
		\end{subfigure}

		\begin{subfigure}{0.4\textwidth}
			\centering\scalebox{0.8}{\tikzset{middlearrow/.style={
	decoration={markings,
		mark= at position 0.6 with {\arrow{#1}},
	},
	postaction={decorate}
}}

\tikzset{shortdigon/.style={
	decoration={markings,
		mark= at position 0.45 with {\arrow[inner sep=10pt]{<}},
		mark= at position 0.75 with {\arrow[inner sep=10pt]{>}},
	},
	postaction={decorate}
}}

\tikzset{digon/.style={
	decoration={markings,
		mark= at position 0.5 with {\arrow[inner sep=10pt]{<}},
		mark= at position 0.7 with {\arrow[inner sep=10pt]{>}},
	},
	postaction={decorate}
}}

\begin{tikzpicture}[scale = 0.6]

	\def \n{10}
	\def \radiusrate {5};
	\def \theradius {1.3cm};

	\begin{scope}[yshift=-3cm]
		\foreach \i in {1,...,\n} {
		  \coordinate (A \i) at
		  ($(0,0)!{1 + .2*rand}!({\i*360/\n}:\theradius)$);
		};

		\draw [fill=setfilling, draw=setborder] plot [smooth cycle] coordinates
		{(A 1) (A 2) (A 3) (A 4) (A 5) (A 6) (A 7) (A 8) (A 9) (A 10)};
		
		\node (label_set) [] at (-140:2cm) {$B^*$};
	\end{scope}

	\node (vk) [black vertex] at (0,0) {};
	\node (vk+1) [black vertex] at ($(vk)+(0:2cm)$) {};
	\node (vk+2) [black vertex] at ($(vk+1)+(0:2cm)$) {};
	\node (vl) [black vertex] at ($(vk)+(0:7cm)$) {};
	\node (vk-1) [black vertex] at ($(vk)+(180:2cm)$) {};
	\node (vk-2) [black vertex] at ($(vk-1)+(180:2cm)$) {};
	\node (v1) [black vertex] at ($(vk)+(180:7cm)$) {};
	
	\node (u) [black vertex] at ($(vk)+(-90:3cm)$) {};
	
	\node (label_vk) [] at ($(vk)+(90:.6)$) {$v_k$};
	\node (label_vk+1) [] at ($(vk+1)+(90:.6)$) {$v_{k+1}$};
	\node (label_vk+2) [] at ($(vk+2)+(90:.6)$) {$v_{k+2}$};
	\node (label_vl) [] at ($(vl)+(90:.6)$) {$v_\ell$};
	\node (label_vk-1) [] at ($(vk-1)+(90:.6)$) {$v_{k-1}$};
	\node (label_vk-2) [] at ($(vk-2)+(90:.6)$) {$v_{k-2}$};
	\node (label_v1) [] at ($(v1)+(90:.6)$) {$v_1$};
	\node (label_u) [] at ($(u)+(-90:.6)$) {$u$};

	\draw[edge, middlearrow={>>}] (u) -- (v1);
	\draw[edge, middlearrow={>>}] (u) -- (vk-2);
	\draw[edge, color1, middlearrow={>>}] (u) -- (vk-1);
	\draw[edge, color1, digon] (u) -- (vk);
	\draw[edge, middlearrow={>>}] (vk+1) -- (u);
	\draw[edge, middlearrow={>>}] (vk+2) -- (u);
	\draw[edge, middlearrow={>>}] (vl) -- (u);

	\draw[edge, color1, shortdigon] (vk-2) -- (vk-1);
	\draw[edge, middlearrow={>>}] (vk-1) -- (vk);
	\draw[edge, color1, shortdigon] (vk) -- (vk+1);
	\draw[edge, color1, shortdigon] (vk+1) -- (vk+2);

	\draw[edge, 
		color1, 
		fixed point arithmetic, 
		decoration={snake}, 
		segment length=10mm, 
		decorate, 
		postaction={
			decoration={markings,
				mark=at position 0.5 with {\arrow{<}},
				mark=at position 0.7 with {\arrow{>}}
			},
			decorate}] (v1) -- (vk-2);

	\draw[edge, 
		color1, 
		fixed point arithmetic, 
		decoration={snake}, 
		segment length=10mm, 
		decorate, 
		postaction={
			decoration={markings,
				mark=at position 0.4 with {\arrow{<}},
				mark=at position 0.6 with {\arrow{>}}
			},
			decorate}] (vk+2) -- (vl);

\end{tikzpicture}

 }
			\caption{}\label{fig:m5}
		\end{subfigure}
    \hfill
		\begin{subfigure}{0.5\textwidth}
			\centering\scalebox{0.8}{\tikzset{middlearrow/.style={
	decoration={markings,
		mark= at position 0.6 with {\arrow{#1}},
	},
	postaction={decorate}
}}

\tikzset{shortdigon/.style={
	decoration={markings,
		mark= at position 0.45 with {\arrow[inner sep=10pt]{<}},
		mark= at position 0.75 with {\arrow[inner sep=10pt]{>}},
	},
	postaction={decorate}
}}

\tikzset{digon/.style={
	decoration={markings,
		mark= at position 0.4 with {\arrow[inner sep=10pt]{<}},
		mark= at position 0.6 with {\arrow[inner sep=10pt]{>}},
	},
	postaction={decorate}
}}

\begin{tikzpicture}[scale = 0.6]

	\def \n{10}
	\def \radiusrate {5};
	\def \theradius {1.3cm};

	\begin{scope}[yshift=-3cm]
		\foreach \i in {1,...,\n} {
		  \coordinate (A \i) at
		  ($(0,0)!{1 + .2*rand}!({\i*360/\n}:\theradius)$);
		};

		\draw [fill=setfilling, draw=setborder] plot [smooth cycle] coordinates
		{(A 1) (A 2) (A 3) (A 4) (A 5) (A 6) (A 7) (A 8) (A 9) (A 10)};
		
		\node (label_set) [] at (-140:2cm) {$B^*$};
	\end{scope}

	\node (vk) [black vertex] at (0,0) {};
	\node (vk+1) [black vertex] at ($(vk)+(0:2cm)$) {};
	\node (vk+2) [black vertex] at ($(vk+1)+(0:2cm)$) {};
	\node (vl) [black vertex] at ($(vk)+(0:7cm)$) {};
	\node (vk-1) [black vertex] at ($(vk)+(180:2cm)$) {};
	\node (vk-2) [black vertex] at ($(vk-1)+(180:2cm)$) {};
	\node (v1) [black vertex] at ($(vk)+(180:7cm)$) {};
	
	\node (u) [black vertex] at ($(vk)+(-90:3cm)$) {};
	
	\node (label_vk) [] at ($(vk)+(90:.6)$) {$v_k$};
	\node (label_vk+1) [] at ($(vk+1)+(90:.6)$) {$v_{k+1}$};
	\node (label_vk+2) [] at ($(vk+2)+(90:.6)$) {$v_{k+2}$};
	\node (label_vl) [] at ($(vl)+(90:.6)$) {$v_\ell$};
	\node (label_vk-1) [] at ($(vk-1)+(90:.6)$) {$v_{k-1}$};
	\node (label_vk-2) [] at ($(vk-2)+(90:.6)$) {$v_{k-2}$};
	\node (label_v1) [] at ($(v1)+(90:.6)$) {$v_1$};
	\node (label_u) [] at ($(u)+(-90:.6)$) {$u$};

	\draw[edge, middlearrow={>>}] (u) -- (v1);
	\draw[edge, middlearrow={>>}] (u) -- (vk-2);
	\draw[edge, middlearrow={>>}] (u) -- (vk-1);
	\draw[edge, color1, digon] (u) -- (vk);
	\draw[edge, middlearrow={>>}] (vk+1) -- (u);
	\draw[edge, color1, middlearrow={>>}] (vk+2) -- (u);
	\draw[edge, middlearrow={>>}] (vl) -- (u);

	\draw[edge, color1, shortdigon] (vk-2) -- (vk-1);
	\draw[edge, middlearrow={>>}] (vk-1) -- (vk);
	\draw[edge, color1, middlearrow={>>}] (vk) -- (vk+1);
	\draw[edge, shortdigon] (vk+1) -- (vk+2);

	\draw[edge, color1, digon] (vk-1) to [in=90, out=90, looseness=1] (vk+1);

	\draw[edge,
		color1,
		fixed point arithmetic, 
		decoration={snake}, 
		segment length=10mm, 
		decorate, 
		postaction={
			decoration={markings,
				mark=at position 0.5 with {\arrow{<}},
				mark=at position 0.7 with {\arrow{>}}
			},
			decorate}] (v1) -- (vk-2);

	\draw[edge, 
		color1, 
		fixed point arithmetic, 
		decoration={snake}, 
		segment length=10mm, 
		decorate, 
		postaction={
			decoration={markings,
				mark=at position 0.4 with {\arrow{<}},
				mark=at position 0.6 with {\arrow{>}}
			},
			decorate}] (vk+2) -- (vl);

\end{tikzpicture}

 }
			\caption{}\label{fig:m6}
		\end{subfigure}
	
		\begin{subfigure}{0.4\textwidth}
			\centering\scalebox{0.8}{\tikzset{middlearrow/.style={
	decoration={markings,
		mark= at position 0.6 with {\arrow{#1}},
	},
	postaction={decorate}
}}

\tikzset{shortdigon/.style={
	decoration={markings,
		mark= at position 0.45 with {\arrow[inner sep=10pt]{<}},
		mark= at position 0.75 with {\arrow[inner sep=10pt]{>}},
	},
	postaction={decorate}
}}

\tikzset{digon/.style={
	decoration={markings,
		mark= at position 0.4 with {\arrow[inner sep=10pt]{<}},
		mark= at position 0.6 with {\arrow[inner sep=10pt]{>}},
	},
	postaction={decorate}
}}

\begin{tikzpicture}[scale = 0.6]

	\def \n{10}
	\def \radiusrate {5};
	\def \theradius {1.3cm};

	\begin{scope}[yshift=-3cm]
		\foreach \i in {1,...,\n} {
		  \coordinate (A \i) at
		  ($(0,0)!{1 + .2*rand}!({\i*360/\n}:\theradius)$);
		};

		\draw [fill=setfilling, draw=setborder] plot [smooth cycle] coordinates
		{(A 1) (A 2) (A 3) (A 4) (A 5) (A 6) (A 7) (A 8) (A 9) (A 10)};
		
		\node (label_set) [] at (-140:2cm) {$B^*$};
	\end{scope}

	\node (vk) [black vertex] at (0,0) {};
	\node (vl) [black vertex] at ($(vk)+(0:2cm)$) {};
	\node (vk-1) [black vertex] at ($(vk)+(180:2cm)$) {};
	\node (vk-2) [black vertex] at ($(vk-1)+(180:2cm)$) {};
	\node (v1) [black vertex] at ($(vk)+(180:7cm)$) {};
	
	\node (u) [black vertex] at ($(vk)+(-90:3cm)$) {};
	
	\node (label_vk) [] at ($(vk)+(90:.6)$) {$v_k$};
	\node (label_vl) [] at ($(vl)+(90:.6)$) {$v_\ell$};
	\node (label_vk-1) [] at ($(vk-1)+(90:.6)$) {$v_{k-1}$};
	\node (label_vk-2) [] at ($(vk-2)+(90:.6)$) {$v_{k-2}$};
	\node (label_v1) [] at ($(v1)+(90:.6)$) {$v_1$};
	\node (label_u) [] at ($(u)+(-90:.6)$) {$u$};

	\draw[edge, middlearrow={>>}] (u) -- (v1);
	\draw[edge, color1, middlearrow={>>}] (u) -- (vk-2);
	\draw[edge, middlearrow={>>}] (u) -- (vk-1);
	\draw[edge, color1, digon] (u) -- (vk);
	\draw[edge, middlearrow={>>}] (vl) -- (u);

	\draw[edge, shortdigon] (vk-2) -- (vk-1);
	\draw[edge, color1, middlearrow={>>}] (vk-1) -- (vk);
	\draw[edge, middlearrow={>>}] (vk) -- (vl);

	\draw[edge, color1, digon] (vk-1) to [in=90, out=90, looseness=1] (vl);

	\draw[edge, 
		color1, 
		fixed point arithmetic, 
		decoration={snake}, 
		segment length=10mm, 
		decorate, 
		postaction={
			decoration={markings,
				mark=at position 0.5 with {\arrow{<}},
				mark=at position 0.7 with {\arrow{>}}
			},
			decorate}] (v1) -- (vk-2);
	
\end{tikzpicture}

 }
			\caption{}\label{fig:m7}
		\end{subfigure}
    \hfill
		\begin{subfigure}{0.4\textwidth}
			\centering\scalebox{0.8}{\tikzset{middlearrow/.style={
	decoration={markings,
		mark= at position 0.6 with {\arrow{#1}},
	},
	postaction={decorate}
}}

\tikzset{shortdigon/.style={
	decoration={markings,
		mark= at position 0.5 with {\arrow[inner sep=10pt]{<}},
		mark= at position 0.7 with {\arrow[inner sep=10pt]{>}},
	},
	postaction={decorate}
}}

\tikzset{digon/.style={
	decoration={markings,
		mark= at position 0.4 with {\arrow[inner sep=10pt]{<}},
		mark= at position 0.5 with {\arrow[inner sep=10pt]{>}},
	},
	postaction={decorate}
}}

\begin{tikzpicture}[scale = 0.6]

	\node (v2) [black vertex] at (0,0) {};
	\node (v1) [black vertex] at ($(v2)+(180:2cm)$) {};
	\node (vk) [black vertex] at ($(v2)+(0:2cm)$) {};

	\node (u) [black vertex] at ($(v2)+(-90:3cm)$) {};
	\node (w) [black vertex] at ($(v2)+(90:4cm)$) {};
	
	\node (label_v1) [] at ($(v1)+(130:.6)$) {$v_1$};
	\node (label_v2) [] at ($(v2)+(90:.6)$) {$v_2$};
	\node (label_vk) [] at ($(vk)+(50:.6)$) {$v_k$};
	\node (label_u) [] at ($(u)+(-90:.6)$) {$u$};
	\node (label_w) [] at ($(w)+(90:.6)$) {$w$};

	\draw[edge, color1, middlearrow={>>}] (u) -- (v1);
	\draw[edge, color1, shortdigon] (u) -- (v2);
	\draw[edge, middlearrow={>>}] (vk) -- (u);

	\draw[edge, middlearrow={>>}] (v1) -- (v2);
	\draw[edge, middlearrow={>>}] (v2) -- (vk);

	\draw[edge, digon] (v1) to [in=90, out=90, looseness=1] (vk);

	\draw[edge, middlearrow={>>}] (w) to [in=90, out=200, looseness=0.5] (v1);
	\draw[edge, color1, middlearrow={>>}] (vk) to [in=-20, out=90, looseness=0.5] (w);
	\draw[edge, color1, digon] (v2) to [in=-20, out=50, looseness=0.5] (w);

\end{tikzpicture}

 }
			\caption{}\label{fig:may-8}
		\end{subfigure}
		\caption{Illustrations for the proof of Lemma~\ref{lem:semicomplete-strong-characterization}.
      Let \(u\) and \(v\) be two vertices in the picture.
      We use an arrow with two heads from \(u\) to \(v\) to denote that \(u \mapsto v\), and we use an arrow with two heads in opposite directions joining \(u\) and \(v\) to denote that \(u \ \leftrightarrow v\).
      A snaked line joining \(u\) and \(v\) represents a path with possibly several internal vertices.}
	\end{figure}

	We may assume that \(v_kv_{k - 1} \notin A(G)\), otherwise \(v_\ell v_{\ell -1} \cdots v_{k + 1}uv_{k}v_{k - 1}v_{k - 2} \cdots v_1\) would be an \(\{s,t\}\)-path longer than \(P\) (see Figure~\ref{fig:m4}), a contradiction.
	Thus \(B' = B'' = \emptyset\), otherwise \(D[\{v_k, v_{k - 1}, w\}]\) would be a transitive triangle, where \(w \in B' \cup B''\).
	Since, \(u \mapsto v_{k - 1}\), \(v_{k - 1} \mapsto v_k\), and \(D[\{v_{k - 1}, v_k, u\}]\) is not a transitive triangle, we have \(v_ku \in A(D)\), and hence \(v_k \leftrightarrow u\).
	Thus, we have \(v_{k + 1}v_k \notin A(D)\), otherwise \(v_\ell v_{\ell - 1} \cdots v_k u v_{k - 1} v_{k - 2} \cdots v_1\) would be an \(\{s, t\}\)-path longer than \(P\) (see Figure~\ref{fig:m5}), a contradiction.
	Thus, since \(v_{k - 1} \mapsto v_k\), \(v_k \mapsto v_{k + 1}\), and \(D[\{v_{k - 1}, v_k, v_{k + 1}\}]\) and \(D[\{v_{k - 1}, u, v_{k + 1}\}]\) are not transitive triangles, we have \(v_{k +1 } \leftrightarrow v_{k - 1}\), and hence \(D[\{u, v_{k - 1}, v_k, v_{k + 1}\}]\) is isomorphic to the digraph in Figure~\ref{fig:k4-bad}.

	Now, note that \(\ell = k + 1\), otherwise \(\ell > k + 1\) and \(v_\ell v_{\ell - 1} \cdots v_{k + 2} u v_{k} v_{k + 1} v_{k - 1} v_{k - 2} \cdots v_1\) is an \(\{s, t\}\)-path longer than \(P\) (see Figure~\ref{fig:m6}), a contradiction; also, \(k = 2\), otherwise \(k > 2\) and \(v_\ell v_{k - 1} v_k u v_{k - 2} v_{k - 3} \cdots v_1\) is an \(\{s,t\}\)-path longer than \(P\) (see Figure~\ref{fig:m7}), a contradiction.
	Therefore, \(k = 2\), \(\ell = 3\), and \(P = v_1v_2v_3\).
	Moreover, for every vertex \(w \in B^*\), we have \(D[V(P) \cup \{w\}]\) is isomorphic to the digraph depicted in Figure~\ref{fig:k4-bad}.
	Thus, \(B^* = \{u\}\), otherwise there exists a vertex \(w \in B^*\) different of \(u\) and \(v_3 w v_2 u v_1\) would be an \(\{s, t\}\)-path longer than \(P\) (see Figure~\ref{fig:may-8}), a contradiction.
		Therefore, $D = D[V(P) \cup \{u\}]$, which is isomorphic to the digraph in Figure~\ref{fig:k4-bad}, and hence the result follows.
\end{proof}

\begin{lemma}\label{lem:semicomplete-non-strong-characterization}
	Let $D$ be a semicomplete digraph without induced transitive triangles.
	If \(D\) is not strong, then $D$ has exactly two strong components, say $X_1$ and $X_2$, with
	$X_1$ being the minimal strong component, $X_1$ and \(X_2\) are complete digraphs, and for every \(s \in V(X_1)\)  and \(t\in V(X_2)\) there exists a Hamilton path in \(D\) starting at \(s\) and ending at \(t\).
\end{lemma}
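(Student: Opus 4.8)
The plan is to analyze $D$ through its strong components and to exploit repeatedly the absence of induced transitive triangles (recall that a transitive triangle is a set of three vertices $a,b,c$ with $a \mapsto b$, $b \mapsto c$, and $a \mapsto c$).

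First I would recall the standard fact that the strong components of a semicomplete digraph are linearly ordered. Indeed, given two distinct strong components $X$ and $Y$, semicompleteness makes every cross pair adjacent, and the strongness of $X$ and $Y$ forces all cross arcs to point the same way: a cross arc in each direction, together with internal paths, would merge $X$ and $Y$ into a single strong component. Hence the condensation of $D$ is a transitive tournament, and we may label the strong components $X_1, X_2, \ldots, X_m$ so that $X_i \mapsto X_j$ whenever $i < j$. Since $D$ is not strong, $m \geq 2$. To see $m = 2$, suppose $m \geq 3$ and choose $a \in V(X_1)$, $b \in V(X_2)$, $c \in V(X_3)$; then $a \mapsto b$, $b \mapsto c$, and $a \mapsto c$, so $D[\{a,b,c\}]$ is an induced transitive triangle, a contradiction. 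Thus $D$ has exactly two strong components $X_1$ and $X_2$ with $X_1 \mapsto X_2$, and since no arc enters $X_1$, it is the minimal one.

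Next I would show that both components are complete. The crucial point is that nonstrongness makes both $V(X_1)$ and $V(X_2)$ nonempty, and this is precisely what excludes single arcs inside a component (in isolation, a directed triangle is a perfectly good strong semicomplete digraph with no transitive triangle, so completeness cannot follow from the component alone). Suppose some adjacent pair $u, v \in V(X_1)$ satisfies $u \mapsto v$. Picking any $d \in V(X_2)$, we have $u \mapsto d$ and $v \mapsto d$ because $X_1 \mapsto X_2$, so $D[\{u,v,d\}]$ is a transitive triangle, a contradiction. Hence every adjacent pair in $X_1$ forms a digon and $X_1$ is complete. Symmetrically, if $u \mapsto v$ with $u, v \in V(X_2)$, then any $c \in V(X_1)$ gives $c \mapsto u$ and $c \mapsto v$, so $D[\{c,u,v\}]$ is again a transitive triangle; hence $X_2$ is complete. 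Finally, the Hamilton path is built by concatenation: since $X_1$ is a complete digraph, every ordering of its vertices is a path, so there is a Hamilton path $P_1$ of $D[V(X_1)]$ starting at $s$ and ending at an arbitrary vertex $u^\ast$; likewise there is a Hamilton path $P_2$ of $D[V(X_2)]$ starting at an arbitrary $w^\ast$ and ending at $t$. As $X_1 \mapsto X_2$, the arc $u^\ast w^\ast$ exists, so $P_1$ followed by $u^\ast w^\ast$ followed by $P_2$ is a Hamilton path of $D$ from $s$ to $t$.

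The individual steps are short, and the main obstacle (or rather the point most easily overlooked) is the completeness argument: within a single strong component a directed triangle would be admissible, and it is only the presence of the second component, guaranteed by $D$ not being strong, that forbids single arcs inside each $X_i$. Once completeness is secured, the linear order of the components and the final concatenation are routine.
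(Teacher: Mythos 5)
Your proposal is correct and follows essentially the same route as the paper: three distinct strong components would yield an induced transitive triangle, a lonely arc inside either component together with a vertex of the other component would yield one as well, and the Hamilton path is obtained by concatenating orderings of the two complete components across the arcs from $X_1$ to $X_2$. Your additional remarks (the linear ordering of strong components, and the observation that non-strongness is what forces completeness) are correct and only make explicit what the paper leaves implicit.
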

\begin{proof}
	Since \(D\) is not strong, it has at least two strong components.
	Towards a contradiction, suppose that \(D\) has more than two strong components and let \(x, y, z\) be vertices from three distinct strong components of \(D\).
	Note that if \(u\) and \(v\) are two adjacent vertices which belong to distinct strong components, then either \(u \mapsto v\) or \(v \mapsto u\).
	Thus \(D[\{x,y,z\}]\) is a transitive triangle, a contradiction.
	Thus \(D\) has precisely two strong components, say \(X_1\) and \(X_2\), and suppose, without loss of generality, that \(X_1\) is the minimal strong component of \(D\).
	Let \(u, v \in V(X_i)\) and let \(w \in V(X_{3 - i})\), for \(i \in \{1, 2\}\).
	Thus either \(w \mapsto \{u, v\}\) or \(\{u, v\} \mapsto w\), and since \(D[\{u, v, w\}]\) is not a transitive triangle, we have \(u \leftrightarrow v\).
	Now the proof follows trivially since \(X_1\) and \(X_2\) are complete digraphs and \(V(X_1) \mapsto V(X_2)\).
\end{proof}

Since every induced subdigraph of a semicomplete digraph is also semicomplete, it  suffices to show that semicomplete digraphs satisfy the BE-property in order to prove that they are BE-diperfect.
Hence, Theorem~\ref{the:semicomplete-be-diperfect} follows directly from Lemmas~\ref{lem:semicomplete-strong-characterization} and~\ref{lem:semicomplete-non-strong-characterization} (note that the exception of Lemma~\ref{lem:semicomplete-strong-characterization}, the digraph in Figure~\ref{fig:k4-bad}, is BE-diperfect).

\begin{theorem}\label{the:semicomplete-be-diperfect}
	If \(D \in \fD\) is a semicomplete digraph (i.e., a semicomplete digraph without induced transitive triangles), then \(D\) is BE-diperfect.
\end{theorem}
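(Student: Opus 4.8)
The plan is to reduce the statement to the two structural lemmas already established. First I would observe that the class of semicomplete digraphs in \(\fD\) is closed under taking induced subdigraphs: an induced subdigraph of a semicomplete digraph is again semicomplete, and the condition of having no induced transitive triangle is itself hereditary for induced subdigraphs. Consequently, to prove that an arbitrary such \(D\) is BE-diperfect it suffices to prove that every semicomplete \(D \in \fD\) satisfies the BE-property; heredity then upgrades this conclusion to all induced subdigraphs of \(D\) at once.

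Next I would pin down what the BE-property demands in this setting. Since \(D\) is semicomplete, any two distinct vertices are adjacent, so \(\alpha(D) = 1\) and the maximum stable sets of \(D\) are exactly the singletons \(\{v\}\) with \(v \in V(D)\). For such an \(S = \{v\}\), the requirement that every path of an \(S_{\rm BE}\)-path partition meet \(S\) in exactly one vertex, which must moreover be an endpoint, forces the partition to consist of a single path spanning \(V(D)\)---that is, a Hamilton path---having \(v\) as one of its two ends. Thus the BE-property for \(D\) is \emph{equivalent} to the statement: for every \(v \in V(D)\), the digraph \(D\) admits a Hamilton path beginning or ending at \(v\). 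This translation is the only genuinely conceptual step.

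I would then dispatch the remaining cases by reading them off the lemmas. If \(D\) is strong and not isomorphic to the exceptional digraph of Figure~\ref{fig:k4-bad}, then Lemma~\ref{lem:semicomplete-strong-characterization} furnishes a Hamilton \(\{v,t\}\)-path for any second vertex \(t\), which in particular has \(v\) as an end. If \(D\) is not strong, then Lemma~\ref{lem:semicomplete-non-strong-characterization} provides strong components \(X_1\) (minimal) and \(X_2\), together with a Hamilton path from any \(s \in V(X_1)\) to any \(t \in V(X_2)\); given \(v\), choosing \(t \in V(X_2)\) when \(v \in V(X_1)\), or \(s \in V(X_1)\) when \(v \in V(X_2)\), realizes \(v\) as an endpoint of a Hamilton path. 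The single case not handled by the lemmas is the exceptional strong digraph of Figure~\ref{fig:k4-bad}, which I would verify directly: it contains a spanning directed cycle (the \(4\)-cycle drawn with single arrows in the figure), and opening this cycle at any of its vertices yields a Hamilton path having that vertex as an endpoint, so every vertex is an endpoint of some Hamilton path.

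The work here is deliberately light because Lemmas~\ref{lem:semicomplete-strong-characterization} and~\ref{lem:semicomplete-non-strong-characterization} have already absorbed the structural difficulty. I do not expect a serious obstacle in the assembly; the only subtlety worth flagging is the equivalence established in the second paragraph---recognizing that \(\alpha(D)=1\) collapses the BE-property into the endpoint-Hamilton-path condition---together with the need to treat the digraph of Figure~\ref{fig:k4-bad} by hand, since Lemma~\ref{lem:semicomplete-strong-characterization} explicitly excludes it.
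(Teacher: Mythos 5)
Your proposal is correct and follows exactly the route the paper takes: the paper likewise reduces to the BE-property by heredity, notes that for a semicomplete digraph this amounts to finding a Hamilton path with a prescribed endpoint, and derives the theorem from Lemmas~\ref{lem:semicomplete-strong-characterization} and~\ref{lem:semicomplete-non-strong-characterization} after checking the exceptional digraph of Figure~\ref{fig:k4-bad} by hand. You have simply written out in full the details that the paper leaves implicit.
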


We remark that instead of using Lemma~\ref{lem:semicomplete-strong-characterization} in the proof of Theorem~\ref{the:semicomplete-be-diperfect}, we could use a result provided by Camion~\cite{Camion1959} that says that every strong semicomplete digraph has a Hamilton cycle.
However, Lemma~\ref{lem:semicomplete-strong-characterization} provide more structure on the Hamilton path, which may be useful in future works.  
Theorem~\ref{the:bed-perfect-graphs} verifies Conjecture~\ref{con:bed} for digraphs whose underlying graphs are perfect.

\begin{theorem}\label{the:bed-perfect-graphs}
	If \(D \in \fD\) and \(U(D)\) is perfect, then \(D\) is BE-diperfect.
\end{theorem}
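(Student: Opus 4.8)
The plan is to follow the same outline as Berge's proof of Conjecture~\ref{con:diperfect} for digraphs with perfect underlying graph, reproduced at the start of this section, replacing the single use of Rédei's theorem (Theorem~\ref{the:redei}) by Theorem~\ref{the:semicomplete-be-diperfect}, which is precisely the endpoint-controlled strengthening we need. First I would observe that both hypotheses are hereditary: if $H$ is an induced subdigraph of $D$, then $U(H)$ is an induced subgraph of the perfect graph $U(D)$ and hence perfect, and any induced blocking odd cycle of $H$ would also be one of $D$, so $H \in \fD$. Consequently it suffices to prove that every digraph $D$ with $U(D)$ perfect and $D \in \fD$ satisfies the BE-property; the conclusion that $D$ is BE-diperfect then follows by applying this statement to each induced subdigraph of $D$.

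So fix a maximum stable set $S$ of $D$. Since a clique (resp.\ stable set) of $D$ is exactly a clique (resp.\ independent set) of $U(D)$, we have $\alpha(D)=\alpha(U(D))$, and the perfection of $U(D)$ together with Theorem~\ref{the:weak-perfect} gives that $\alpha(U(D))$ equals the minimum number of cliques in a clique partition of $U(D)$. Let $\sC$ be a clique partition of $D$ of this minimum size, so $|\sC| = |S|$. As $S$ is stable it meets each $C \in \sC$ in at most one vertex, and since $\sum_{C \in \sC}|C \cap S| = |S| = |\sC|$, each clique $C$ contains exactly one vertex $s_C$ of $S$.

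Next I would produce, for each $C \in \sC$, a Hamilton path $P_C$ of $D[C]$ having $s_C$ as an endpoint. Here $D[C]$ is semicomplete, since every two vertices of a clique are adjacent, and it lies in $\fD$, because an induced transitive triangle of $D[C]$ would be an induced blocking odd cycle of $D$. Hence Theorem~\ref{the:semicomplete-be-diperfect} applies and $D[C]$ is BE-diperfect. As $\{s_C\}$ is a maximum stable set of $D[C]$ (here $\alpha(D[C]) = 1$), the BE-property yields a path partition of $D[C]$ whose unique part meets $\{s_C\}$ at an endpoint, that is, a Hamilton path $P_C$ of $D[C]$ beginning or ending at $s_C$. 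Setting $\sP = \{P_C : C \in \sC\}$ then gives the desired $S_{\rm BE}$-path partition of $D$: the sets $V(P_C)$ partition $V(D)$ because $\sC$ does, and each $P_C$ meets $S$ exactly in its endpoint $s_C$.

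The only genuine departure from Berge's argument, and hence the step doing the real work, is the replacement of Rédei's theorem by the endpoint-controlled Hamilton path supplied by Theorem~\ref{the:semicomplete-be-diperfect} (equivalently Lemmas~\ref{lem:semicomplete-strong-characterization} and~\ref{lem:semicomplete-non-strong-characterization}); everything else is the standard clique-partition counting in a perfect graph. I expect no further obstacle, since the counting forces $|C \cap S| = 1$ automatically and the BE-property of each semicomplete block then assembles into a global $S_{\rm BE}$-path partition.
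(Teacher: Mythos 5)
Your proposal is correct and follows essentially the same route as the paper's own proof: reduce to the BE-property by heredity, take a minimum clique partition of the perfect underlying graph, use Theorem~\ref{the:weak-perfect} to force exactly one vertex of $S$ per clique, and invoke Theorem~\ref{the:semicomplete-be-diperfect} on each semicomplete block to get a Hamilton path with the $S$-vertex as an endpoint. No substantive differences.
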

\begin{proof}
			  Note that for every induced subdigraph \(D'\) of \(D\), it follows that \(D' \in \fD\) and \(U(D')\) is perfect, and hence, to prove the result it suffices to show that \(D\) satisfies the BE-property.
	Let \(S\) be a maximum stable set of \(U(D)\) and let \(\sC = \{C_1, C_2, \ldots, C_k\}\) be a clique partition of \(U(D)\) with the smallest size.
  By Theorem~\ref{the:weak-perfect}, \(|S| = k\), and hence, for every \(C_i \in \sC\), we have \(C_i \cap S = \{x_i\}\).
	Since \(D[C_i]\) is a semicomplete digraph, \(S^i = \{x_i\}\) is a maximum stable set of \(D[C_i]\), and hence, by Theorem~\ref{the:semicomplete-be-diperfect},
	there exists an \(S^i_\text{BE}\)-path partition \(\{P_i\}\) of \(D[C_i]\).
	Therefore, \(\{P_1, P_2, \ldots, P_k\}\) is an \SBE-path partition of \(D\), and the result follows.
\end{proof}

\section{Technical results}
\label{sec:structural-results}

In this section we present some structural results for  \(\alpha\)-diperfect digraphs and BE-diperfect digraphs.
Given a digraph \(D\), we say a vertex \(v \in V(D)\) is \emph{universal} if \(v\) is adjacent to every other vertex of \(D\).

\begin{lemma}\label{lem:universal-diperfect}
	Let \(D\) be a digraph  and let \(v\) be a universal vertex of \(D\).
	If \(D - v\) satisfies the \(\alpha\)-property, then \(D\) satisfies the \(\alpha\)-property.
\end{lemma}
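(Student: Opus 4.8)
The plan is to reduce everything to a single combinatorial fact: because \(v\) is adjacent to every other vertex, it can always be spliced into any directed path of \(D - v\), increasing that path's length by exactly one. Granting this, a maximum stable set \(S\) of \(D\) is dispatched by two cases according to whether \(v \in S\).

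\textbf{The insertion step (the crux).} First I would establish the following claim: if \(P = u_1u_2\cdots u_m\) is any path of \(D - v\), then there is a path \(P^+\) of \(D\) with \(V(P^+) = V(P) \cup \{v\}\), obtained by inserting \(v\) at the front, at the back, or between two consecutive vertices of \(P\). To prove it, note that \(v\) is adjacent to each \(u_i\). If \(vu_1 \in A(D)\), prepend \(v\); if \(u_mv \in A(D)\), append \(v\). Otherwise adjacency forces \(u_1v \in A(D)\) and \(vu_m \in A(D)\); letting \(i\) be the largest index with \(u_iv \in A(D)\) (which exists since \(u_1v \in A(D)\) and satisfies \(i < m\) since \(u_mv \notin A(D)\)), maximality together with adjacency gives \(vu_{i+1} \in A(D)\), so \(u_1\cdots u_i\,v\,u_{i+1}\cdots u_m\) is the desired path. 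This case analysis on the directions of the arcs between \(v\) and the \(u_i\) is the only real content of the lemma, and it is the step I expect to require the most care.

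\textbf{Case \(v \notin S\).} Every stable set of \(D - v\) is stable in \(D\), so \(\alpha(D - v) \le \alpha(D)\); since \(S \subseteq V(D-v)\) is stable in \(D - v\) we also get \(\alpha(D-v) \ge |S| = \alpha(D)\), whence \(S\) is a \emph{maximum} stable set of \(D - v\). By the hypothesis that \(D - v\) satisfies the \(\alpha\)-property, there is an \(S\)-path partition \(\sP'\) of \(D - v\). Choosing any \(P \in \sP'\) and replacing it by the path \(P^+\) from the insertion step yields a collection \(\sP\) that partitions \(V(D)\); since \(v \notin S\) we have \(|V(P^+) \cap S| = |V(P) \cap S| = 1\), and every other path is untouched, so \(\sP\) is an \(S\)-path partition of \(D\).

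\textbf{Case \(v \in S\).} Here universality forces \(S = \{v\}\), so \(\alpha(D) = 1\). If \(V(D) = \{v\}\), the single trivial path is an \(S\)-path partition. Otherwise \(\alpha(D - v) = 1\), so any singleton \(\{u\}\) with \(u \in V(D - v)\) is a maximum stable set of \(D - v\); the \(\alpha\)-property then gives a \(\{u\}\)-path partition of \(D - v\), which, since an \(S\)-path partition always has exactly \(|S|\) paths, consists of a single Hamilton path \(P\) of \(D - v\). Inserting \(v\) into \(P\) as above produces a Hamilton path \(P^+\) of \(D\), and \(\{P^+\}\) is an \(S\)-path partition because \(V(P^+) \cap S = \{v\}\). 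Once the insertion step is in place, both cases are routine bookkeeping; I note in particular that it lets the awkward \(\alpha(D)=1\) situation reuse the same mechanism rather than appealing to a separate Hamilton-path theorem for semicomplete digraphs.
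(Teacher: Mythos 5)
Your proof is correct and follows essentially the same route as the paper: reduce to the case \(v \notin S\), take an \(S\)-path partition of \(D - v\), and splice the universal vertex into one of its paths (you make explicit the insertion argument that the paper dismisses as ``not hard to show''). The only divergence is the degenerate case \(\alpha(D) = 1\), where the paper invokes R\'edei's theorem for semicomplete digraphs while you instead reuse the insertion step on a Hamilton path of \(D - v\) obtained from the \(\alpha\)-property hypothesis; both are valid.
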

\begin{proof}
	Let $S$ be a maximum stable set in \(D\).
	We may assume that \(|S| = \alpha(D) \geq 2\), otherwise \(D\) is a semicomplete digraph and the result follows by Theorem~\ref{the:redei}.
	Since \(\alpha(D) \geq 2\) and \(v\) is a universal vertex, it follows that \(v \notin S\).
	Thus \(S\) is a maximum stable set of \(D - v\), and since \(D - v\) satisfies the \(\alpha\)-property by hypothesis, there exists an \(S\)-path partition \(\sP\) of \(D - v\).
    Let \(P \in \sP\) be a path and note that \(v\) is adjacent to every vertex of \(P\).
    Thus, it is not hard to show that there exists a path \(P'\) of \(D\) such that \(V(P') = V(P) \cup \{v\}\).
 	Therefore \((\sP \cup \{P'\}) \setminus \{P\}\) is an \(S\)-path partition of \(D\).
    Since \(S\) was arbitrarily chosen, the result follows.
\end{proof}

\begin{corollary}\label{cor:universal-diperfect}
	Let \(D\) be a digraph and let \(v\) be a universal vertex of \(D\).
	Then \(D\) is \(\alpha\)-diperfect if, and only if, \(D - v\) is \(\alpha\)-diperfect.
\end{corollary}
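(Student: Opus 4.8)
The plan is to exploit the hereditary nature of \(\alpha\)-diperfection together with Lemma~\ref{lem:universal-diperfect}, which handles the one-vertex extension. The forward implication is immediate and needs no new idea: if \(D\) is \(\alpha\)-diperfect, then every induced subdigraph of \(D\) satisfies the \(\alpha\)-property; since every induced subdigraph of \(D - v\) is also an induced subdigraph of \(D\), it follows at once that \(D - v\) is \(\alpha\)-diperfect.

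For the reverse implication, I would assume that \(D - v\) is \(\alpha\)-diperfect and verify that \emph{every} induced subdigraph \(H\) of \(D\) satisfies the \(\alpha\)-property, which is exactly the definition of \(D\) being \(\alpha\)-diperfect. The natural way to carry this out is to split into two cases according to whether \(v \in V(H)\). If \(v \notin V(H)\), then \(H\) is an induced subdigraph of \(D - v\), so \(H\) satisfies the \(\alpha\)-property directly by hypothesis.

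The interesting case is \(v \in V(H)\). Here the key observation is that \(v\) remains universal \emph{in \(H\)}: because \(v\) is adjacent in \(D\) to every other vertex and \(H\) is an induced subdigraph, \(v\) is adjacent to every vertex of \(H - v\). At the same time, \(H - v\) is an induced subdigraph of \(D - v\) and hence satisfies the \(\alpha\)-property. These two facts are precisely the hypotheses of Lemma~\ref{lem:universal-diperfect} applied to the digraph \(H\) with universal vertex \(v\), and so that lemma yields that \(H\) satisfies the \(\alpha\)-property.

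Since \(H\) was an arbitrary induced subdigraph of \(D\), both cases together show that \(D\) is \(\alpha\)-diperfect, completing the biconditional. I do not anticipate any genuine obstacle: the argument is the routine ``hereditary property plus a one-vertex extension lemma'' pattern, and the only point that deserves an explicit (one-line) remark is the preservation of universality when passing to an induced subdigraph still containing \(v\) — everything else is bookkeeping over induced subdigraphs.
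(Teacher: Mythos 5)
Your argument is correct and matches the paper's own proof essentially line for line: the necessity from heredity, the case split on whether \(v\in V(H)\), the observation that \(v\) stays universal in \(H\), and the application of Lemma~\ref{lem:universal-diperfect} to \(H\). The only cosmetic difference is that the paper explicitly dispenses with the trivial case \(|V(H)|=1\) before invoking the lemma, which your write-up absorbs without harm.
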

\begin{proof}
	The necessity follows directly from the definition of \(\alpha\)-diperfect digraphs, thus it only remains to prove the sufficiency.
	Let \(H\) be an induced subdigraph of \(D\).
  We claim that \(H\) satisfy the \(\alpha\)-property. 
	If \(v \notin V(H)\), then \(H\) is an induced subdigraph of \(D - v\), and since \(D - v\) is \(\alpha\)-diperfect, \(H\) satisfies the \(\alpha\)-property.
	Thus suppose that \(v \in V(H)\), and note that \(v\) is a universal vertex in~\(H\).
  We may assume that \(H\) has at least two vertices, otherwise \(H\) satisfy the \(\alpha\)-property in a trivial way.
	Since \(H - v\) is an induced subdigraph of \(D - v\), the digraph \(H - v\) satisfies the \(\alpha\)-property, and hence by Lemma~\ref{lem:universal-diperfect}, \(H\) satisfies the \(\alpha\)-property.
  Since \(H\) was chosen arbitrarily, it follows that the digraph \(D\) is \(\alpha\)-diperfect.
\end{proof}

As a consequence of Corollary~\ref{cor:universal-diperfect}, if \(D\) is a counterexample for Conjecture~\ref{con:diperfect} with the smallest number of vertices, then no vertex of \(D\) is universal.

A digraph \(D\) is \(\alpha\)-diperfect if, and only if, its inverse digraph is \(\alpha\)-diperfect.
The same applies for BE-diperfect digraphs.
This directional duality is useful to fix an orientation for a given path or arc in the middle of a proof. 
We use this fact in the proof of Lemma~\ref{lem:universal-be-diperfect}, which is the analogous version of Lemma~\ref{lem:universal-diperfect} for BE-diperfect digraphs.
We remark that Lemma~\ref{lem:universal-be-diperfect} requires \(D\) to be a digraph in \(\fD\) (see Figure~\ref{fig:example-requirement-be}), unlike Lemma~\ref{lem:universal-diperfect} that does not require \(D\) to be a digraph in \(\fB\).

\begin{figure}
	\centering\scalebox{0.9}{\tikzset{middlearrow/.style={
	decoration={markings,
		mark= at position 0.6 with {\arrow{#1}},
	},
	postaction={decorate}
}}

\tikzset{shortdigon/.style={
	decoration={markings,
		mark= at position 0.5 with {\arrow[inner sep=10pt]{<}},
		mark= at position 0.7 with {\arrow[inner sep=10pt]{>}},
	},
	postaction={decorate}
}}

\tikzset{digon/.style={
	decoration={markings,
		mark= at position 0.4 with {\arrow[inner sep=10pt]{<}},
		mark= at position 0.5 with {\arrow[inner sep=10pt]{>}},
	},
	postaction={decorate}
}}

\begin{tikzpicture}[scale = 0.6]

	\node (x) [] at (0,0) {};
	\node (a_fake) [] at ($(x)+(180:2cm)$) {};
	\node (b_fake) [] at ($(a_fake)+(-90:2cm)$) {};
	\draw [rounded corners,draw=setborder,fill=setfilling] ($(a_fake)+(135:2)$) rectangle ($(b_fake)+(-45:2)$);

	\node (v) [black vertex] at ($(x)+(-90:5cm)$) {};
	\node (a) [black vertex] at ($(x)+(180:2cm)$) {};
	\node (b) [black vertex] at ($(a)+(-90:2cm)$) {};
	\node (c) [black vertex] at ($(a)+(0:4cm)$) {};
	\node (d) [black vertex] at ($(b)+(0:4cm)$) {};

	\node (label_S) [] at ($(a)+(135:2.2)$) {\large $S$};

	\node (label_a) [] at ($(a)+(90:.6)$) {$a$};
	\node (label_b) [] at ($(b)+(90:.6)$) {$b$};
	\node (label_v) [] at ($(v)+(-90:.6)$) {$v$};
	
	\draw[edge, middlearrow={>}] (a) -- (c);
	\draw[edge, middlearrow={>}] (b) -- (d);
	\draw[edge, middlearrow={>}] (v) -- (b);
	\draw[edge, middlearrow={>}] (v) -- (d);

	\draw[edge, middlearrow={>}] (v) to [in=-120, out=170, looseness=1] (a);
	\draw[edge, middlearrow={>}] (v) to [in=-60, out=10, looseness=1] (c);

\end{tikzpicture}

 }
	\caption[Example of a digraph \(D\) with a universal vertex \(v\) such that \(D - v\) satisfies the BE-property but \(D\) does not satisfies.]{
		Example of a digraph \(D\) with a universal vertex \(v\) such that \(D - v\) satisfies the BE-property but \(D\) does not.
		Note that $S = \{a, b\}$ is a maximum stable set of \(D\) but $D$ has no $S_\text{BE}$-path partition.
	}\label{fig:example-requirement-be}
\end{figure}

\begin{lemma}\label{lem:universal-be-diperfect}
	Let \(D\) be a digraph in \(\fD\) and let \(v\) be a universal vertex of \(D\).
	If \(D - v\) satisfies the BE-property, then \(D\) satisfies the BE-property.
\end{lemma}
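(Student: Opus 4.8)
The plan is to mirror the proof of Lemma~\ref{lem:universal-diperfect}, with the extra care that the distinguished vertex of the stable set must stay at an \emph{endpoint} when we reinsert \(v\). First, if \(\alpha(D) = 1\) then \(D\) is semicomplete, and since \(D \in \fD\), Theorem~\ref{the:semicomplete-be-diperfect} tells us \(D\) is BE-diperfect, hence satisfies the BE-property; so I may assume \(\alpha(D) \ge 2\). Fix a maximum stable set \(S\) of \(D\). Because \(v\) is universal and \(\alpha(D) \ge 2\), we have \(v \notin S\), so \(S\) is also a maximum stable set of \(D - v\); by hypothesis there is an \SBE-path partition \(\sP\) of \(D - v\). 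It then suffices to exhibit a single path \(P \in \sP\) that can \emph{absorb} \(v\), meaning that \(V(P) \cup \{v\}\) spans a path still having the stable-set vertex of \(P\) as an endpoint. Replacing \(P\) by this new path yields an \SBE-path partition of \(D\), and since \(S\) is arbitrary, \(D\) satisfies the BE-property.

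So I would fix an arbitrary \(P = x_1 x_2 \cdots x_k \in \sP\) and argue that it can always absorb \(v\). Using the directional duality for BE-diperfect digraphs (invert \(D\) and reverse every path of \(\sP\), which preserves membership in \(\fD\), universality of \(v\), and the \SBE\ structure), I may assume the stable-set vertex of \(P\) is \(x_1\). If \(k = 1\) the claim is immediate, since \(v\) is adjacent to \(x_1\) and so \(x_1 v\) or \(v x_1\) is a path with \(x_1\) as an endpoint. For \(k \ge 2\), the only ways to absorb \(v\) while keeping \(x_1\) at the beginning are to append \(v\) after \(x_k\) (requiring \(x_k \to v\)) or to insert \(v\) in a gap between \(x_i\) and \(x_{i+1}\) (requiring \(x_i \to v\) and \(v \to x_{i+1}\)). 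Assuming all of these fail, a short downward induction on the index — starting from the fact that the failed append forces \(v \to x_k\) — shows that \(x_i \not\to v\) and \(v \to x_i\) for every \(i\); that is, \(v \mapsto V(P)\).

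At this point I would invoke \(D \in \fD\). If some consecutive pair satisfies \(x_i \to x_{i+1}\) but \(x_{i+1} \not\to x_i\), then \(D[\{v, x_i, x_{i+1}\}]\) has exactly the arcs \(vx_i\), \(vx_{i+1}\), \(x_i x_{i+1}\), so \(v\) is a source and \(x_{i+1}\) a sink; this is an induced transitive triangle, contradicting \(D \in \fD\). Therefore every consecutive pair of \(P\) is a digon, and then the reversed sequence \(x_k x_{k-1} \cdots x_1\) is a path; prepending \(v\) (using \(v \to x_k\)) gives \(v x_k \cdots x_1\), which has the stable vertex \(x_1\) as an endpoint — so \(P\) absorbs \(v\) after all. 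In either case the assumption that \(P\) cannot absorb \(v\) is untenable, which completes the argument.

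The main obstacle is precisely this final dichotomy. The naive insertion analysis only rules out append and gap insertions built from the \emph{forward} arcs of \(P\); one must then eliminate the remaining possibility — that \(v\) dominates a path all of whose consecutive arcs are two-cycles — not by producing a forbidden subdigraph but by reversing the path itself. Getting the bookkeeping right (that the triangle produced is genuinely induced and is a blocking odd cycle, and that the begin/end cases are unified through directional duality) is where the care lies, and it is also exactly the step where the hypothesis \(D \in \fD\) is used, matching the fact recorded in Figure~\ref{fig:example-requirement-be} that the statement is false without it.
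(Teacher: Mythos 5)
Your proposal is correct and follows essentially the same route as the paper's proof: reduce to \(\alpha(D)\geq 2\) so that \(v\notin S\), take an \(S_{\rm BE}\)-path partition of \(D-v\), use directional duality to place the stable vertex at the start of a path \(P\), run the backward induction showing \(v\mapsto V(P)\) when no append or gap insertion is available, invoke the absence of induced transitive triangles to conclude that consecutive vertices of \(P\) form digons, and finally prepend \(v\) to the reversed path. The only cosmetic difference is that you phrase it as a direct absorption argument for a single path rather than a global contradiction, which changes nothing of substance.
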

\begin{proof}
	Towards a contradiction, suppose that \(D\) does not satisfy the BE-property.
	Let $S$ be a maximum stable set in \(D\), and suppose that there is no \SBE-path partition of \(D\).
	We may assume that \(\alpha(D) \geq 2\), otherwise, by Theorem~\ref{the:semicomplete-be-diperfect}, \(D\) satisfies the BE-property, a contradiction.
	Since \(\alpha(D) \geq 2\) and \(v\) is a universal vertex, we have \(v \notin S\).
	Thus \(S\) is a maximum stable set of \(D - v\), and since \(D - v\) satisfies the BE-property, there exists an \SBE-path partition \(\sP\) of \(D - v\).
	Let $P = u_1 \cdots u_\ell$ be a path in \(\sP\).
	By the directional duality, we may assume, without loss of generality, that \(u_1 \in S\).
	Since \(v\) is a universal vertex, it is adjacent to every vertex of \(P\).

	Now we show by induction on \(q = \ell - j\) that \(v \mapsto u_j\), for \(j = 1, 2, \ldots, \ell\).
	First, suppose that \(q = 0\),  and hence \(u_j = u_\ell\).
	If \(u_j v \in A(D)\), then  \((\sP \cup \{Pv\}) \setminus \{P\}\) is an \SBE-path partition of~\(D\), a contradiction.
	Otherwise \(v \mapsto u_j\), since \(v\) and \(u_j\) are adjacent, and the claim holds.
	Now suppose that \(q > 0\).
	By the induction hypothesis \(v \mapsto u_i\) for every \(i > j\), in particular \(v \mapsto u_{j + 1}\).
	If \(u_j v \in A(D)\), then let \(P' = u_1 \cdots u_j v u_{j + 1} \cdots u_\ell\), and hence \((\sP \cup \{P'\}) \setminus \{P\}\) is an \SBE-path partition of \(D\), a contradiction.
	Thus \(v \mapsto u_j\) and the claim holds.
	
	For \(i = 1, 2, \ldots, \ell - 1\),  it follows that the induced subdigraph \(D[\{u_i, u_{i + 1}, v\}]\) is not a transitive triangle, since \(D \in \fD\),  and hence \(u_i \leftrightarrow u_{i + 1}\), since \(v \mapsto \{u_i, u_{i + 1}\}\).
 	Therefore \(P' = vu_\ell u_{\ell - 1} \cdots u_1\) is a path of \(D\) and \((\sP \cup \{P'\}) \setminus \{P\}\) is an \SBE-path partition of \(D\), a contradiction.
\end{proof}

Next we present Corollary~\ref{cor:universal-be-diperfect} which is similar to Corollary~\ref{cor:universal-diperfect} but for BE-diperfect digraphs.
The only difference in the proof of the two corollaries is that in Corollary~\ref{cor:universal-be-diperfect} we use Lemma~\ref{lem:universal-be-diperfect} instead of Lemma~\ref{lem:universal-diperfect}, and for this reason, we omit its proof.

\begin{corollary}\label{cor:universal-be-diperfect}
	Let \(D\) be a digraph in \(\fD\) and let \(v\) be a universal vertex of \(D\).
	Then \(D\) is BE-diperfect if, and only if, \(D - v\) is BE-diperfect.
\end{corollary}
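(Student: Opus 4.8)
The plan is to mirror the proof of Corollary~\ref{cor:universal-diperfect}, replacing the appeal to Lemma~\ref{lem:universal-diperfect} by one to Lemma~\ref{lem:universal-be-diperfect} and keeping careful track of the extra hypothesis that the latter requires. First I would dispatch the necessity: since \(D - v\) is itself an induced subdigraph of \(D\), and every induced subdigraph of an induced subdigraph of \(D\) is again an induced subdigraph of \(D\), the assumption that \(D\) is BE-diperfect immediately yields that \(D - v\) is BE-diperfect. This direction uses nothing about the universal vertex and follows straight from the definition of BE-diperfect.

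For sufficiency, assume that \(D - v\) is BE-diperfect. I would pick an arbitrary induced subdigraph \(H\) of \(D\) and argue that \(H\) satisfies the BE-property, splitting into two cases according to whether \(v \in V(H)\). If \(v \notin V(H)\), then \(H\) is an induced subdigraph of \(D - v\), so it satisfies the BE-property by hypothesis. If \(v \in V(H)\), then \(v\) remains a universal vertex of \(H\); assuming \(H\) has at least two vertices (the one-vertex case being trivial), the digraph \(H - v\) is an induced subdigraph of \(D - v\) and hence satisfies the BE-property. Applying Lemma~\ref{lem:universal-be-diperfect} to \(H\) then yields that \(H\) satisfies the BE-property. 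Since \(H\) was chosen arbitrarily, it follows that \(D\) is BE-diperfect.

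The one point requiring care---and the only genuine difference from the \(\alpha\)-version in Corollary~\ref{cor:universal-diperfect}---is that Lemma~\ref{lem:universal-be-diperfect} may be applied to \(H\) only when \(H \in \fD\). This is where I would invoke the hereditary nature of the class \(\fD\): membership in \(\fD\) is defined by the absence of an induced blocking odd cycle, so every induced subdigraph of a digraph in \(\fD\) is again in \(\fD\). Since \(D \in \fD\) by hypothesis, each induced subdigraph \(H\) satisfies \(H \in \fD\), and so the hypothesis of Lemma~\ref{lem:universal-be-diperfect} is indeed met. There is no real obstacle here; this observation is precisely the reason the corollary is stated for digraphs in \(\fD\), and once it is in place the argument is otherwise word-for-word identical to that of Corollary~\ref{cor:universal-diperfect}.
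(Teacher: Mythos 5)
Your proposal is correct and follows exactly the route the paper intends: the authors omit the proof precisely because it is the proof of Corollary~\ref{cor:universal-diperfect} with Lemma~\ref{lem:universal-be-diperfect} substituted for Lemma~\ref{lem:universal-diperfect}. Your added observation that the hereditary nature of \(\fD\) guarantees each induced subdigraph \(H\) satisfies the hypothesis of Lemma~\ref{lem:universal-be-diperfect} is the right (and only) point of care, and it is handled correctly.
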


Similar to what happened before, as a consequence of Corollary~\ref{cor:universal-be-diperfect}, if \(D\) is a counterexample for Conjecture~\ref{con:bed} with the smallest number of vertices, then no vertex of \(D\) is universal.

The following result, Lemma~\ref{lem:dip-vertex-partition-sum-alpha}, says that if a digraph \(D\) can be partitioned into \(k\) induced subdigraphs, say \(H_1, H_2, \ldots, H_k\), such that \(k \geq 2\), every \(H_i\) satisfies the \(\alpha\)-property, and \(\alpha(D) = \sum_{i = 1}^k \alpha(H_i)\), then \(D\) satisfies the \(\alpha\)-property.
As a consequence of this result, if \(D\) is a counterexample for Conjecture~\ref{con:diperfect} with the smallest number of vertices, then \(D\) does not admit such partition.
This allows us to verify Conjecture~\ref{con:diperfect} by supposing that there exists a counterexample, taking a smallest one, and  then exhibiting such partition, which results in a contradiction. 
Another way of using Lemma~\ref{lem:dip-vertex-partition-sum-alpha} is by exhibiting such partition in such a way that each \(H_i\) belongs to some class of digraphs that satisfies the \(\alpha\)-property; like, for example, perfect digraphs (which includes bipartite digraphs and semicomplete digraphs) or series-parallel digraphs (see Section~\ref{sec:series-parallel}).
We should remark though that not every \(\alpha\)-diperfect digraph \(D\) admits such partition, take for example an (directed) odd cycle.

\begin{lemma}\label{lem:dip-vertex-partition-sum-alpha}
	If a digraph \(D\) can be partitioned into \(k\) induced subdigraphs, say \(H_1,\) \( H_2, \ldots, H_k\), such that  \(k \geq 2\),  every \(H_i\) satisfies the \(\alpha\)-property, and \(\alpha(D) = \sum_{i = 1}^k \alpha(H_i)\), then \(D\) satisfies the \(\alpha\)-property.
\end{lemma}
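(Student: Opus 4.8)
The plan is to verify the \(\alpha\)-property directly from its definition: take an arbitrary maximum stable set \(S\) of \(D\), and construct an \(S\)-path partition of \(D\) by gluing together \(S_i\)-path partitions of the pieces \(H_i\), where \(S_i := S \cap V(H_i)\). First I would record two elementary facts. Since each \(H_i\) is an \emph{induced} subdigraph of \(D\), each \(S_i\) is a stable set of \(H_i\), so \(|S_i| \le \alpha(H_i)\). And since \(\{V(H_1), \ldots, V(H_k)\}\) is a partition of \(V(D)\), the sets \(S_1, \ldots, S_k\) partition \(S\), whence \(|S| = \sum_{i=1}^k |S_i|\).

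The crux of the argument is a short forcing step that turns each \(S_i\) into a \emph{maximum} stable set of \(H_i\). Combining the two facts above with the hypotheses \(|S| = \alpha(D)\) and \(\alpha(D) = \sum_{i=1}^k \alpha(H_i)\) gives
\[
  \sum_{i=1}^k \alpha(H_i) \;=\; \alpha(D) \;=\; |S| \;=\; \sum_{i=1}^k |S_i| \;\le\; \sum_{i=1}^k \alpha(H_i),
\]
so equality must hold throughout. Since each summand satisfies \(|S_i| \le \alpha(H_i)\), equality of the total sums forces \(|S_i| = \alpha(H_i)\) for every \(i\); that is, each \(S_i\) is a maximum stable set of \(H_i\).

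Once this is established the remainder is routine. Because each \(H_i\) satisfies the \(\alpha\)-property and \(S_i\) is a maximum stable set of \(H_i\), there exists an \(S_i\)-path partition \(\sP_i\) of \(H_i\). I would then set \(\sP = \bigcup_{i=1}^k \sP_i\). As the vertex sets \(V(H_i)\) partition \(V(D)\) and each \(\sP_i\) partitions \(V(H_i)\), the collection \(\sP\) is a path partition of \(D\) (each path of \(\sP_i\) is a path of \(D\), since \(H_i\) is induced, and paths from distinct \(H_i\) are automatically vertex-disjoint). Moreover every \(P \in \sP_i\) lies entirely inside \(H_i\), so \(V(P) \cap S = V(P) \cap S_i\), which has exactly one element. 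Hence \(\sP\) is an \(S\)-path partition of \(D\), and since \(S\) was an arbitrary maximum stable set, \(D\) satisfies the \(\alpha\)-property.

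I do not anticipate a genuine obstacle here: the only step requiring any care is the forcing argument, and it reduces to the one-line inequality chain displayed above. The assumption \(k \ge 2\) plays no essential role in the proof beyond guaranteeing a nontrivial partition; the same argument would go through verbatim for \(k = 1\) (where it is vacuous).
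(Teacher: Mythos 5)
Your proposal is correct and follows essentially the same argument as the paper: intersect the maximum stable set with each part, use the inequality chain to force each intersection to be a maximum stable set of its part, and take the union of the resulting \(S_i\)-path partitions. The only difference is that you spell out the routine verifications in more detail than the paper does.
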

\begin{proof}
	Let \(S\) be a maximum stable set of \(D\) and let \(S_i = S \cap V(H_i)\) for \(i = 1, 2, \ldots, k\).
	Thus,
	\[\alpha(D) = |S| = \sum_{i = 1}^k |S_i| \leq \sum_{i = 1}^k \alpha(H_i) = \alpha(D).\]
	Hence, \(S_i\) is a maximum stable set of \(H_i\), and since the latter satisfies the \(\alpha\)-property, there exists an \(S_i\)-path partition \(\sP_i\) of \(H_i\), for \(i = 1, \ldots, k\).
	Therefore, \(\bigcup_{i = 1}^k \sP_i\) is an \(S\)-path partition of \(D\), and since \(S\) is an arbitrary maximum stable set of \(D\), the result follows.
\end{proof}

Next lemma is the version of Lemma~\ref{lem:dip-vertex-partition-sum-alpha} for BE-diperfect digraphs.
Everything that we discussed for Lemma~\ref{lem:dip-vertex-partition-sum-alpha} applies for Lemma~\ref{lem:bed-vertex-partition-sum-alpha}.
Moreover, its proof is precisely the same, so we omit it.

\begin{lemma}\label{lem:bed-vertex-partition-sum-alpha}
	If a digraph \(D\) can be partitioned into \(k\) induced subdigraphs, say \(H_1,\) \( H_2, \ldots, H_k\), such that  \(k \geq 2\),  every \(H_i\) satisfies the BE-property, and \(\alpha(D) = \sum_{i = 1}^k \alpha(H_i)\), then \(D\) satisfies the BE-property.
\end{lemma}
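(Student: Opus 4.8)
The plan is to follow the proof of Lemma~\ref{lem:dip-vertex-partition-sum-alpha} line for line, replacing every \(S\)-path partition by an \SBE-path partition. First I would fix an arbitrary maximum stable set \(S\) of \(D\) and set \(S_i = S \cap V(H_i)\) for \(i = 1, 2, \ldots, k\). Since the restriction of a stable set of \(D\) to the induced subdigraph \(H_i\) is again a stable set, each \(S_i\) is a stable set of \(H_i\).

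The only step carrying real content is the counting argument. Because \(\{V(H_1), \ldots, V(H_k)\}\) partitions \(V(D)\), I have
\[\alpha(D) = |S| = \sum_{i=1}^k |S_i| \le \sum_{i=1}^k \alpha(H_i) = \alpha(D),\]
where the inequality uses \(|S_i| \le \alpha(H_i)\) and the last equality is the hypothesis on \(\alpha(D)\). Equality throughout forces \(|S_i| = \alpha(H_i)\) for every \(i\), so each \(S_i\) is a maximum stable set of \(H_i\). Invoking the hypothesis that \(H_i\) satisfies the BE-property, applied to the maximum stable set \(S_i\), yields an \(S_{i,\text{BE}}\)-path partition \(\sP_i\) of \(H_i\).

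I then take \(\sP = \bigcup_{i=1}^k \sP_i\). Since the sets \(V(H_i)\) partition \(V(D)\) and each path of \(\sP_i\) lies entirely inside the induced subdigraph \(H_i\), every such path is a genuine path of \(D\) and \(\sP\) is a path partition of \(D\). Moreover, each \(P \in \sP_i\) meets \(S_i\) in exactly one vertex, which is an endpoint of \(P\); as the \(V(H_i)\) are pairwise disjoint, this is also the unique vertex of \(S\) lying on \(P\). Hence \(\sP\) is an \SBE-path partition of \(D\), and since \(S\) was arbitrary, \(D\) satisfies the BE-property. I expect no genuine obstacle here: the argument is exactly the one for Lemma~\ref{lem:dip-vertex-partition-sum-alpha}, the substance being the counting identity that upgrades each \(S_i\) to a maximum stable set, together with the observation that confining a path to an induced subdigraph preserves both its being a path of \(D\) and the endpoint status of its \(S\)-vertex.
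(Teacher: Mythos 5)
Your proof is correct and is essentially the paper's own argument: the authors explicitly omit the proof of this lemma because it is word-for-word the proof of Lemma~\ref{lem:dip-vertex-partition-sum-alpha}, which is exactly the counting identity and union-of-partitions argument you give. The extra observation you add --- that the unique \(S_i\)-vertex of each path remains an endpoint and is the unique \(S\)-vertex of that path --- is the only point where the BE version differs, and you handle it correctly.
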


\subsection{Partition lemmas}\label{sec:partition-lemmas}

Now we present two results that allow us to partition a digraph \(D\) into two induced subdigraphs \(D_1\) and \(D_2\) such that \(\alpha(D) = \alpha(D_1) + \alpha(D_2)\).
When dealing with a stable set in a digraph, we are, indeed, looking at its underlying graph, since the orientation of the arcs does not matter.
Therefore in order to simplify notation and terminology, not having to worry about the direction of the arcs, we state such results in terms of graphs.
In forthcoming sections, we are going to use these results together with Lemmas~\ref{lem:dip-vertex-partition-sum-alpha} and~\ref{lem:bed-vertex-partition-sum-alpha} to verify Conjectures~\ref{con:diperfect} and~\ref{con:bed}.

A \emph{clique cut} of a connected graph \(G\) is a clique \(X\) of \(G\) such that \(G - X\) is disconnected.
A \emph{cut vertex} of \(G\) is a vertex \(v\) such that \(G - v\) is disconnected.

\begin{lemma}\label{lem:clique-cut-alpha=alpha1+alpha2}
	If $B$ is a clique cut of a graph $G$, then \(G\) can be partitioned into two proper induced subgraphs \(H_1\) and \(H_2\) such that \(\alpha(G) = \alpha(H_1) + \alpha(H_2)\). 
	Moreover, if $uv$ is an edge of~$G$ such that $u \in V(H_1)$ and $v \in V(H_2)$, then $\{u, v\} \cap B \neq \emptyset$.
\end{lemma}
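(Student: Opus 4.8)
The plan is to reduce the problem to a clean dichotomy for $\alpha(G)$ and then, in the hard case, to split the clique $B$ across the two parts rather than keeping it whole.

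First I would record the structure and dispose of the \emph{moreover} clause. Since $B$ is a clique cut, $G-B$ is disconnected; write its connected components as $C_1, \dots, C_m$ with $m \ge 2$. Observe that \emph{any} partition $V(G) = V(H_1) \sqcup V(H_2)$ in which each $C_i$ lies entirely in one part automatically satisfies the moreover clause: a crossing edge $uv$ with $u,v \notin B$ would join two vertices of $G-B$ lying in distinct parts, hence in distinct components, which is impossible, so $\{u,v\}\cap B \ne \emptyset$. Thus I am free to distribute the components among the two parts and, independently, to split $B$ however I like, and I only need to arrange $\alpha(G) = \alpha(H_1)+\alpha(H_2)$. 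Next I would note the dichotomy $\alpha(G-B) \le \alpha(G) \le \alpha(G-B)+1$: the lower bound is immediate, and the upper bound holds because a stable set meets the clique $B$ in at most one vertex, so deleting that vertex from a maximum stable set of $G$ yields a stable set of $G-B$.

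In the case $\alpha(G) = \alpha(G-B)+1$ I would simply take $H_1 = G[B]$ and $H_2 = G-B$; both are proper, and $\alpha(H_1)+\alpha(H_2) = 1 + \alpha(G-B) = \alpha(G)$. The case $\alpha(G) = \alpha(G-B)$ is the crux, and here I would translate the equality into a per-vertex condition. For each $b \in B$, the set $\{b\}$ together with a maximum stable set of $(G-B)-N(b)$ is stable in $G$, so $1 + \alpha\bigl((G-B)-N(b)\bigr) \le \alpha(G) = \alpha(G-B)$. Writing $d_i(b) = \alpha(C_i) - \alpha(C_i - N(b)) \ge 0$ and using that $\alpha$ is additive over components, this rearranges to $\sum_{i=1}^m d_i(b) \ge 1$ for every $b \in B$. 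Now I fix any partition of $V(G-B)$ into two nonempty sets $A$ and $A'$, each a union of components (possible since $m \ge 2$). For each $b \in B$, since $\sum_{C_i \subseteq A} d_i(b) + \sum_{C_i \subseteq A'} d_i(b) \ge 1$ with both summands nonnegative integers, at least one of them is $\ge 1$; I put $b$ into $B_1$ if $\sum_{C_i \subseteq A} d_i(b) \ge 1$ and otherwise into $B_2$ (so that $\sum_{C_i \subseteq A'} d_i(b) \ge 1$). Setting $H_1 = G[A \cup B_1]$ and $H_2 = G[A' \cup B_2]$, the condition $\sum_{C_i \subseteq A} d_i(b) \ge 1$ is exactly $1 + \alpha(G[A] - N(b)) \le \alpha(G[A])$, which says that no vertex of $B_1$ can enlarge a maximum stable set of $G[A]$; hence $\alpha(H_1) = \alpha(G[A])$, and symmetrically $\alpha(H_2) = \alpha(G[A'])$. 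Therefore $\alpha(H_1)+\alpha(H_2) = \alpha(G[A]) + \alpha(G[A']) = \alpha(G-B) = \alpha(G)$, and both parts are proper because $A$ and $A'$ are nonempty.

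The main obstacle is recognizing that the second case genuinely forces one to split $B$ across the two parts: keeping $B$ on a single side can make $\alpha(H_1)+\alpha(H_2)$ strictly exceed $\alpha(G)$. Already for the path on $u_1,b_1,b_2,u_2$ with clique cut $B=\{b_1,b_2\}$ one has $\alpha(G)=2$, yet every partition keeping $B$ whole gives sum $3$, while the split into $\{u_1,b_1\}$ and $\{u_2,b_2\}$ is exact. The technical heart is the deficit inequality $\sum_i d_i(b) \ge 1$, which guarantees that each $b$ admits at least one side on which it is harmless; once this is in hand, the assignment of the vertices of $B$ and the final computation are routine.
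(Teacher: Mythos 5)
Your proof is correct, but it takes a genuinely different route from the paper's. The paper argues by induction on \(|B|\): it removes a vertex \(v \in B\), applies the induction hypothesis to \(G - v\) with clique cut \(B \setminus \{v\}\) to obtain a partition \(H_1', H_2'\), and then decides which side \(v\) should join via a case analysis on whether \(\alpha(G) = \alpha(G-v)\) or \(\alpha(G) = \alpha(G-v)+1\); the delicate step there is ruling out, in the first case, that adjoining \(v\) raises the stability number of \emph{both} sides, which is done by gluing two maximum stable sets through \(v\) and invoking the ``moreover'' clause of the inductive partition to produce a stable set of size \(\alpha(G)+1\). Your argument is instead direct and global: the deficit inequality \(\sum_i d_i(b) \ge 1\) certifies, for every \(b \in B\) simultaneously, a side of the component partition on which \(b\) cannot enlarge a maximum stable set, and the whole clique is then distributed in one step. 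Both proofs are sound. Yours avoids the induction and its bookkeeping, obtains the ``moreover'' clause for free by keeping each component of \(G - B\) whole rather than carrying it as part of an induction hypothesis, and makes explicit (via the four-vertex path example) why the clique must in general be split across the two parts --- a phenomenon that is present but less visible in the paper's proof, where the vertices of \(B\) get distributed one at a time over the course of the induction. Two points you pass over quickly but which do hold: in your first case \(B \ne \emptyset\) (since \(\alpha(G) > \alpha(G-B)\) forces it), so \(G - B\) is indeed proper; and the identity \(\alpha(H_1) = \alpha(G[A])\) uses that a stable set meets the clique \(B_1\) in at most one vertex, after which your per-vertex condition bounds the size of any stable set containing that vertex.
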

\begin{proof}
	First, note that if $G$ is disconnected,
	then the result follows easily by choosing \(H_1 = C\) and \(H_2 = G - V(C)\), where $C$ is a connected component of $G$.
	Therefore, we may assume that~$G$ is connected.
	The remaining proof follows by induction on $|B|$.
	Let $v \in B$ and $G' = G - v$.
	
	We claim that the result holds for $G'$.
	If $G'$ is disconnected, then the result holds for \(G'\) by our previous discussion.
	Otherwise, $G'$ is connected, $B \setminus \{v\}$ is a clique cut of $G'$, and by the induction hypothesis, the result holds for $G'$.
	Therefore, let \(H'_1\) and \(H'_2\) be a partition of~$G'$ satisfying the lemma's result.
	Let \(H^+_i = G[V(H'_i) \cup \{v\}]\) for each \(i \in \{1, 2\}\).
	Note that $\alpha(H'_i) \leq \alpha(H^+_i) \leq \alpha(H'_i) + 1$, for each \(i \in \{1, 2\}\), and $\alpha(G') \leq \alpha(G) \leq \alpha(G') + 1$.
	The remaining proof is divided into two cases depending on whether $\alpha(G) = \alpha(G')$ or $\alpha(G) = \alpha(G') + 1$.
	
	First suppose that $\alpha(G) = \alpha(G')$.
		Towards a contradiction, suppose that $\alpha(H^+_i) = \alpha(H'_i) + 1$ for each $i \in \{1,2 \}$.
	In this case every maximum stable set of $H^+_i$ contains the vertex $v$ for each \(i \in \{1, 2\}\).
	Let $S^*_i$ be a maximum stable set of $H^+_i$ for each \(i \in \{1, 2\}\).
	We claim that $S^* = S^*_1 \cup S^*_2$ is a stable set of $G$.
	Suppose that there exists a pair of vertices $a,b \in S^*$ such that $ab \in E(G)$.
	Since $S^*_1$ and $S^*_2$ are stable sets of $H^+_1$ and $H^+_2$, respectively, $a \neq v$, $b \neq v$, and we may assume that $a \in V(H'_1)$ and $b \in V(H'_2)$.
	Since $v \in S^*_1$, $v \in S^*_2$, and $B$ is a clique, we have $S^*_1 \cap B = S^*_2 \cap B = \{v\}$.
	Hence $a \notin B$ and $b \notin B$, a contradiction to the fact that the lemma holds for \(G'\).
	Therefore $S^*$ is a stable set of $G$, and hence
	\begin{align*}
	|S^*| &= |S^*_1| + |S^*_2| - 1 = \alpha(H^+_1) + \alpha(H^+_2) - 1\\
	&= \alpha(H'_1) + \alpha(H'_2) + 1 = \alpha(G') + 1\\
	& = \alpha(G) + 1,
	\end{align*}
	a contradiction.
	Therefore, we may assume, without loss of generality, that $\alpha(H^+_1) = \alpha(H'_1)$, and hence
	\[\alpha(G) = \alpha(G') = \alpha(H'_1) + \alpha(H'_2) = \alpha(H^+_1) + \alpha(H'_2),\]
	and the result follows with \(H_1 = H^+_1\) and \(H_2 = H'_2\).
	
	Now suppose that $\alpha(G') = \alpha(G) - 1$.
	Thus every maximum stable set of $G$ contains the vertex~$v$.
	Suppose that $\alpha(H^+_1) = \alpha(H'_1)$.
	Let $S$ be a maximum stable set in $G$ and let $S_1 = S \cap V(H^+_1)$ and $S_2 = S \cap V(H'_2)$.
	Hence $|S| = |S_1| + |S_2| \leq \alpha(H^+_1) + \alpha(H'_2) = \alpha(G') = \alpha(G) - 1$, a contradiction.
	Therefore, $\alpha(H^+_1) = \alpha(H'_1) + 1$, and
	$\alpha(G) = \alpha(G') + 1 = \alpha(H'_1) + \alpha(H'_2) + 1 = \alpha(H^+_1) + \alpha(H'_2)$.
	Thus, the result follows with \(H_1 = H^+_1\) and \(H_2 = H'_2\).
\end{proof}

\begin{lemma}\label{lem:cycle-at-most-two-vertices-degree-greater-2}
	If \(G\) contains a proper induced cycle containing at most two
	vertices of degree greater than two, then \(G\) can be partitioned into two proper
	induced subgraphs \(H_1\) and \(H_2\) such that \(\alpha(G) = \alpha(H_1) +
	\alpha(H_2)\).
\end{lemma}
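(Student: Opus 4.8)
The plan is to study the given proper induced cycle $C$ through the set $A\subseteq V(C)$ of its vertices having degree greater than two in $G$; by hypothesis $|A|\le 2$. Since $C$ is induced, every vertex of $V(C)\setminus A$ has both of its neighbours on $C$ and hence no neighbour in $V(G)\setminus V(C)$; consequently $A$ separates $V(C)\setminus A$ from $V(G)\setminus V(C)$, the latter being nonempty because $C$ is proper. Throughout I will use that any vertex partition $V(G)=V(H_1)\cup V(H_2)$ automatically gives $\alpha(G)\le\alpha(H_1)+\alpha(H_2)$, so in each case it suffices to exhibit a stable set of $G$ of size $\alpha(H_1)+\alpha(H_2)$. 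I may also assume $G$ is connected, since otherwise taking $H_1$ to be a connected component and $H_2$ the rest already works; under this assumption $|A|\ge 1$, as a component equal to $C$ is impossible.

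First I would dispose of the cases that reduce to Lemma~\ref{lem:clique-cut-alpha=alpha1+alpha2}. If $|A|=1$, say $A=\{a\}$, then $V(C)\setminus\{a\}$ induces a path that is a connected component of $G-a$, so $a$ is a cut vertex and $\{a\}$ is a clique cut. If $|A|=2$, say $A=\{a,b\}$, with $a$ and $b$ adjacent, then $\{a,b\}$ is a clique whose deletion disconnects the nonempty set $V(C)\setminus\{a,b\}$ from $V(G)\setminus V(C)$, so it is a clique cut. In both situations Lemma~\ref{lem:clique-cut-alpha=alpha1+alpha2} delivers the partition.

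The main obstacle is the case $A=\{a,b\}$ with $a,b$ nonadjacent, where $\{a,b\}$ is a $2$-separator that is not a clique and where no clique cut need exist. Here $C-\{a,b\}$ is the disjoint union of the interiors of the two arcs of $C$ between $a$ and $b$, which are induced paths on $m\ge 1$ and $m'\ge 1$ degree-two vertices. I would split according to the parities of $m$ and $m'$. If $m$ or $m'$ is odd, I take $H_1=G-V(C)$ and $H_2=C$: a maximum stable set of $C$ avoiding both $a$ and $b$ lies in the two arc interiors and so has size $\lceil m/2\rceil+\lceil m'/2\rceil$, and an easy parity check shows this equals $\alpha(C)=\lfloor(m+m'+2)/2\rfloor$ exactly when $m,m'$ are not both even. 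As the arc interiors have no neighbour outside $C$, such a stable set is compatible with any maximum stable set of $G-V(C)$, and their union has size $\alpha(H_1)+\alpha(H_2)$, as required.

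If $m$ and $m'$ are both even the whole-cycle split overcounts, since every maximum stable set of $C$ then contains exactly one of $a,b$. Instead I take $H_1$ to be the interior of one arc, an even path with $\alpha(H_1)=m/2$, and $H_2=G-V(H_1)$. The crux is to find a maximum stable set $S$ of $H_2$ omitting at least one of $a,b$: granting this, I extend $S$ across the even path $H_1$ by $m/2$ vertices while avoiding the single endpoint adjacent to whichever of $a,b$ lies in $S$ (an even path keeps a maximum stable set after deleting one endpoint), giving a stable set of $G$ of size $m/2+\alpha(H_2)$. To secure the claim I would work in $G'=G[(V(G)\setminus V(C))\cup\{a,b\}]$ and let $\beta_{11},\beta_{10},\beta_{01}$ be the largest sizes of a stable set of $G'$ containing, respectively, both of $a,b$, only $a$, and only $b$. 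Deleting $b$ from a stable set witnessing $\beta_{11}$ leaves one counted by $\beta_{10}$, so $\beta_{11}\le\beta_{10}+1$, and likewise $\beta_{11}\le\beta_{01}+1$; hence using both $a$ and $b$ is never worth more than using a single one, and a short computation (the even arc retained inside $H_2$ contributing the same with or without its endpoint) then yields a maximum stable set of $H_2$ omitting $a$ or $b$. This submodular-type inequality, controlling how the separator $\{a,b\}$ interacts with the rest of the graph, is the heart of the argument and the step I expect to require the most care.
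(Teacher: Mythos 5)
Your proposal is correct and follows essentially the same route as the paper's proof: the same initial reductions (disconnected and cut-vertex cases via Lemma~\ref{lem:clique-cut-alpha=alpha1+alpha2}), the same two candidate partitions (the cycle \(C\) versus \(G - V(C)\) when an arc interior has odd order, and one arc interior versus the rest when both are even), and the same key fact that \(G\) minus that arc interior admits a maximum stable set avoiding one of the two branch vertices. The paper establishes this last fact by an explicit element swap (trading \(x_0\) for \(x_\ell\) inside a maximum stable set containing both branch vertices) rather than your \(\beta_{11} \le \beta_{10} + 1\) bookkeeping, but the two arguments are interchangeable.
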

\begin{proof}	Let \(C\) be a proper induced cycle of \(G\).
	If every vertex of \(C\) has degree equals to two, then~\(C\) is a component of \(G\) and the result holds with \(H_1 = C\) and \(H_2 = G - V(C)\).
	Now, suppose that \(C\) has precisely one vertex, say \(u\), with degree greater than two.
	Thus, \(u\) is a cut vertex of \(G\) and the result follows by Lemma~\ref{lem:clique-cut-alpha=alpha1+alpha2}.
	Hence, we may assume that \(C\) has precisely two vertices, say \(u\) and \(v\), with degree greater than two.
	Let \(C = x_0 \cdots x_\ell x_0\) and let \(G' = G - V(C)\).
	Suppose, without loss of generality, that \(u = x_0\),  and let \(v = x_k\).
	Note that \(C\) and \(G'\) form a partition of \(G\) into two proper induced subgraphs, and hence
	\begin{equation*}
	\alpha(G) \leq \alpha(C) + \alpha(G').
	\end{equation*}

	Suppose that there exists a maximum stable set \(S\) in \(C\) such that $S \cap \{x_0, x_k\} = \emptyset$.
	Let $S'$ be a maximum stable set in $G'$, and note that $S
	\cup S'$ is a stable set in \(G\).
	Thus \(\alpha(G) \geq |S \cup S'| = \alpha(C) + \alpha(G') \geq \alpha(G)\), and hence the result holds with \(H_1 = C\) and \(H_2 = G' = G - V(C)\).
	Therefore we may assume that every maximum stable set in \(C\) contains \(x_0\) or \(x_k\) (possibly both).
	Now, suppose that there exists a maximum stable set \(S\) in \(C\) such that \(\{x_0, x_k\} \subseteq S\).
	Note that if \(x_i \in S\), then \(x_{i + 1 \pmod{\ell + 1}} \notin S\), and hence \(\{x_{i + 1 \pmod{\ell + 1}} \colon x_i \in S\}\) is a maximum stable set in \(C\) containing neither \(x_0\) nor \(x_k\), a
	contradiction.
	Therefore, we may assume that for every maximum stable set \(S\) in \(C\), it follows that \(|S \cap \{x_0, x_k\}| = 1\).

	Now, we claim that there is no path of even length between $x_0$ and $x_k$ in $C$.
	Towards a contradiction, suppose the opposite and let \(P \subset C\) be such path.
	Suppose, without loss of generality, that \(P = x_0 x_1 \cdots x_k\).
	Since \(P\) has even length, \(k\) is even.
	If \(x_0\) and \(x_k\) are nonadjacent, then \(\{x_0, x_2, x_4, \ldots \}  \setminus  \{x_\ell\}\) is a
	maximum stable set in \(C\) containing both \(x_0\) and \(x_k\), a contradiction.
	Otherwise, \(k = \ell\) and \(C\) is an odd cycle, and hence  \(\{x_1, x_3, x_5, \ldots\}\) is a maximum stable set in \(C\)
	containing neither \(x_0\) nor \(x_k\), a contradiction.
	Therefore, we may assume that there is no path in \(C\) of even length between $x_0$ and
	$x_k$, and hence $C$ is an even cycle and \(k\) is odd.
	
	Suppose that there exists a maximum stable set $S'$ in $G'$
	such that $S' \cap N_{G}(x_0) \cap V(G') = \emptyset$.
	Let \(S = \{x_0, x_2, x_4, \ldots\}\), and note that \(S\) is a stable set in \(C\) such that \(x_k \notin S\) and \(|S| = \alpha(C)\).
	Moreover, \(S \cup S'\) is a stable set in \(G\), and hence \(\alpha(G) \geq |S \cup S'| = \alpha(C) + \alpha(G') \geq \alpha(G)\), and the result holds with \(H_1 = C\) and \(H_2 = G'\).
	Thus we may assume that for every maximum stable set \(S'\) of \(G'\), we have $S' \cap N_G(x_0) \cap V(G') \neq \emptyset$ and, by symmetry, \(S' \cap N_G(x_k) \cap V(G') \neq \emptyset\).
	
	Let \(P_1 = x_1x_2 \cdots x_{k - 1}\) and let \(P_2 = x_{k + 1}x_{k + 2} \cdots x_\ell\).
	Since \(C\) is a cycle, we have \(P_1\) or \(P_2\), say \(P_1\), must be different from the null graph, and since \(C\) contains no path of even
	length between \(x_0\) and \(x_k\), we have both \(P_1\) and \(P_2\)
	are paths with even order.
	Let \(G'' = G - V(P_1)\), and note that \(P_1\) and \(G''\) split \(G\) into  two proper induced subgraphs, and hence 
	\begin{equation*}
	\alpha(G) \leq \alpha(P_1) + \alpha(G'').
	\end{equation*}
	
	Suppose that there exists a maximum stable set \(S''\) in \(G''\) such that \(|S'' \cap \{x_0, x_k\}| \leq 1\).
	Thus, \(x_0\) or \(x_k\) (or both) are not contained in \(S''\).
	Suppose without loss of generality that \(x_0 \notin S''\).
	Let \(S = \{x_1, x_3, x_5, \ldots, x_{k - 2}\}\), and note that \(S\) is a stable set of \(P_1\) such that \(|S| = \alpha(P_1)\).
	Moreover, \(S'' \cup S\) is a stable set of \(G\), and hence  \(\alpha(G) \geq |S'' \cup S| = \alpha(G'') + \alpha(P_1) \geq \alpha(G)\), and the result holds with \(H_1 = P_1\) and \(H_2 = G'' = G - V(P_1)\).
	Therefore, we may assume that for every maximum stable set \(S''\) in \(G''\), we have \(\{x_0, x_k\} \subseteq S''\).
	Thus \(x_0\) and \(x_k\) are non-adjacent, and hence \(P_2\) is not the null graph.
	Let \(S_2 = \{x_{k + 2}, x_{k + 4}, \ldots, x_{\ell - 2}\}\), and note that \(S_2\) is a stable set of \(P_2\) such that \(|S_2| = \alpha(P_2) - 1\).
	Since \(\{x_0, x_k\} \subseteq S''\), we have \(|S'' \cap V(P_2)| \leq \alpha(P_2) - 1\),  and since \(S''\) is a maximum stable set of \(G''\), we have \(|S'' \cap V(P_2)| = \alpha(P_2) - 1\), otherwise \((S'' \setminus V(P_2)) \cup S_2\) would be a stable set of \(G''\) bigger than \(S''\).
	We may assume, without loss of generality, that \(S'' \cap V(P_2) = S_2\), and as result \(x_\ell, x_{\ell - 1} \notin S''\).
	Let \(S_1 = \{x_1, x_3, \ldots, x_{k - 2}\}\), and note that \(|S_1| = \alpha(P_1)\).
	Therefore, \(S = (S'' \setminus \{x_0\}) \cup (\{x_\ell\} \cup S_1)\) is a stable set of \(G\), and hence \(\alpha(G) \geq |S| = \alpha(G'') + \alpha(P_1) \geq \alpha(G)\), and the result holds with \(H_1 = P_1\) and \(H_2 = G''\).
	This finishes the proof. 
\end{proof}

\section{Digraphs whose underlying graphs are series-parallel}
\label{sec:series-parallel}

In this section, we verify Conjectures~\ref{con:diperfect} and~\ref{con:bed} for digraphs whose underlying graphs are series-parallel.
Recall that a graph \(G\) is \emph{series-parallel} if it can be obtained from the null graph by applying the following operations repeatedly: (i)~adding a vertex \(v\) with degree at most one; (ii)~adding a loop; (iii)~adding a parallel edge; (iv)~subdividing an edge.

\begin{lemma}\label{lem:sp-2-connected-cycle-two-vertices-degree-gt-2}
	Let $G$ be a 2-connected simple series-parallel graph on $n \geq 3$ vertices.
	Then there exists an induced cycle $C$ of $G$ containing at most two vertices with degree greater than two.
\end{lemma}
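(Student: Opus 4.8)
The plan is to analyze $G$ through the multigraph obtained by suppressing its degree-two vertices, and to locate the desired cycle using the characterization (stated in the introduction) that series-parallel graphs are precisely the graphs with no subdivision of $K_4$. First I would dispose of the trivial case: if every vertex of $G$ has degree two, then $G$ is itself an induced cycle with no vertex of degree greater than two, and the lemma holds. So assume $G$ has a vertex of degree at least three. Since $G$ is $2$-connected, a short direct argument shows that it must then have \emph{at least two} such vertices: if $u$ were the only vertex of degree at least three, then $G - u$ would have maximum degree at most two, and one checks that this is incompatible with $G$ being $2$-connected on $n \geq 3$ vertices.

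Next I would form the multigraph $H$ obtained from $G$ by repeatedly suppressing degree-two vertices, that is, deleting each such vertex and joining its two neighbours by a new edge. This operation leaves the degree of every other vertex unchanged, so every vertex of $H$ has degree at least three; and since $H$ is a topological minor of $G$, it too contains no subdivision of $K_4$. Each edge of $H$ is the image of a path of $G$ all of whose internal vertices have degree two, and distinct edges of $H$ arise from internally disjoint such paths (the two endpoints being vertices of degree at least three in $G$).

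The key step is to prove that $H$ has two parallel edges. Suppose it does not; then $H$ is a \emph{simple} graph of minimum degree at least three, and by the classical theorem of Dirac every such graph contains a subdivision of $K_4$. This subdivision lifts back through the suppression to a subdivision of $K_4$ in $G$, contradicting the fact that $G$ is series-parallel. Hence there are two vertices $u, v$ of $H$ joined by at least two parallel edges, which in $G$ correspond to two internally disjoint $u$--$v$ paths whose internal vertices all have degree two.

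From these I would build the induced cycle $C$. If $uv \in E(G)$, I take $C$ to consist of the edge $uv$ together with one of the $u$--$v$ paths of length at least two; otherwise I take $C$ to be the union of two such paths. In either case the only vertices of $C$ that can have degree greater than two in $G$ are $u$ and $v$, so $C$ has at most two such vertices. Finally $C$ is induced: each internal vertex of the paths has degree two with both of its incident edges already on $C$, so it carries no chord, and the only other possible chord, namely the edge $uv$, has been placed on $C$ exactly in the case in which it exists. I expect the main obstacle to be precisely the key step — guaranteeing parallel edges after suppression — which is where the no-$K_4$-subdivision characterization (via Dirac's theorem) does the real work; the construction of $C$ and the verification that it is induced are then routine bookkeeping.
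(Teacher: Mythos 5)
Your proof is correct, but it follows a genuinely different route from the paper. The paper argues by induction on the construction sequence from the operational definition of series-parallel graphs: for \(n > 3\) the last operation applied must be a subdivision of some edge \(xy\), and one either finds the triangle \(xzyx\) immediately or pulls back the cycle guaranteed by induction in the smaller graph. You instead work from the forbidden-structure side: you suppress the degree-two vertices to obtain a multigraph \(H\) of minimum degree at least three, invoke Dirac's theorem (every simple graph of minimum degree at least three contains a subdivision of \(K_4\)) to force a pair of parallel edges in \(H\), and read these off as two internally disjoint paths of degree-two vertices between two branch vertices \(u, v\), from which the induced cycle is assembled. Your argument buys a slightly stronger structural conclusion (the cycle you produce is explicitly a theta-type configuration: an ear of degree-two vertices closed off by a second such ear or by the edge \(uv\)) and avoids induction, but at the price of importing Dirac's theorem, a classical but nontrivial external result, whereas the paper's induction is self-contained given its definition of series-parallel graphs. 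Two small points you should make explicit to be airtight: first, suppression can in principle create loops, but \(2\)-connectedness rules this out (a degree-two ear returning to the same branch vertex would make that vertex a cut vertex), and loops must be excluded before Dirac's theorem applies; second, since \(G\) is simple, at most one of the parallel edges of \(H\) between \(u\) and \(v\) can be the actual edge \(uv\) of \(G\), which is what guarantees the path of length at least two that your final construction uses.
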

\begin{proof}
	The proof is by induction on \(n\).
	If \(n = 3\) then \(G\) is isomorphic to \(K_3\) (complete graph with order~\(3\)), and the result holds.
	Now suppose that \(n > 3\), and let \(\theta_1 \theta_2 \cdots \theta_k\) be a sequence of operations applied to the null graph resulting in the graph $G$.
	Note that the operation $\theta_k$ (the last operation applied) cannot be the operations~(i), (ii), or (iii),
	otherwise $G$ would not be 2-connected and simple.
	Therefore, $\theta_k$ is an operation of type (iv) and some edge \(xy\) was subdivided by adding a vertex \(z\).
	If \(x = y\), then \(xz\) and \(yz\) would be  parallel edges, a contradiction.
	Thus,  \(x \neq y\).
	If $xy \in E(G)$, then $xzyx$ is an induced cycle and the result follows.
	Thus, we may assume that $xy \notin E(G)$.
	Let $G'$ be the graph obtained after applying the sequence $\theta_1 \theta_2 \cdots \theta_{k - 1}$ of operations to the null graph.
	Note that  \(G'\) is a \(2\)-connected simple series-parallel graph and that \(V(G') =  V(G) \setminus \{z\}\).
	Thus, by the induction hypothesis, \(G'\) contains an induced cycle \(C'\) containing at most two vertices with degree greater than two.
	If \(xy \notin E(C')\), let \(C = C'\), otherwise let \(C = C' - xy + xzy\).
	Thus \(C\) is an induced cycle in \(G\) containing at most two vertices with degree greater than two and the result follows.
\end{proof}

Our proofs for Conjectures~\ref{con:diperfect} and~\ref{con:bed} for digraphs whose underlying graph are series-parallel explore the fact that series-parallel graphs have a cut vertex or an induced cycle containing at most two vertices with degree greater than two.
We assume that a counterexample exists and use this fact together with the partition lemmas from Section~\ref{sec:partition-lemmas} to contradict Lemmas~\ref{lem:dip-vertex-partition-sum-alpha} or~\ref{lem:bed-vertex-partition-sum-alpha}.
In addition to results presented in Section~\ref{sec:structural-results}, we need the following two auxiliary results to deal with a degenerated case.

\begin{lemma}\label{lem:diperfect-cycle}
	If $D \in \mathfrak{B}$ is a digraph such that $U(D)$ is a cycle, then $D$ is \(\alpha\)-diperfect.
\end{lemma}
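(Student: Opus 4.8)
The plan is to prove the statement by verifying that every induced subdigraph of $D$ satisfies the $\alpha$-property, splitting into two regimes according to whether the induced subdigraph is all of $D$ or a proper one. Since $U(D)$ is a cycle, the underlying graph of any proper induced subdigraph is a disjoint union of paths, while $U(D)$ itself is the full cycle, so I would treat these two regimes separately.

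First I would dispose of the proper induced subdigraphs. By Lemma~\ref{lem:dip-vertex-partition-sum-alpha}, and since $\alpha$ is additive over the connected components of a graph with no edges between them, it suffices to show that a digraph $H$ whose underlying graph is a single path satisfies the $\alpha$-property. This I would prove by induction on $|V(H)|$, writing $U(H)=y_1y_2\cdots y_m$ and examining the endpoint $y_1$. For any maximum stable set $S$, exactly one of $y_1,y_2$ lies in $S$ (they are adjacent, so not both; and if neither were in $S$ then $S\cup\{y_1\}$ would be a larger stable set). Hence $\{y_1,y_2\}$ is a directed path (a single edge) containing exactly one vertex of $S$; removing both vertices yields a shorter path whose restricted stable set is still maximum, and the induction closes. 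This regime never uses the hypothesis $D\in\mathfrak{B}$, which is automatic for forests.

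For $D$ itself, write $U(D)=x_1x_2\cdots x_nx_1$. If $n$ is even, the only maximum stable sets are the two alternating sets, and pairing each stable vertex with one neighbour into a two-vertex directed path gives the desired partition. The substantial case is $n=2k+1$ odd with $k\ge 2$ (the triangle $n=3$ being handled directly by R\'edei's Theorem~\ref{the:redei}). Here every maximum stable set has size $k$ and, up to relabelling, $S=\{x_1,x_3,\dots,x_{2k-1}\}$ with a single ``double gap'' $\{x_{2k},x_{2k+1}\}$. The key structural observation I would establish is that, because each path in an $S$-path partition induces a connected subgraph of the cycle, every such path is a contiguous arc carrying exactly one stable vertex; counting then forces exactly one arc to contain three vertices (an interior vertex flanked by two others), all remaining arcs being single edges. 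An interior vertex of a valid arc can only be a vertex of $S$ or one of $x_{2k},x_{2k+1}$, never a single-gap vertex (which would drag two stable vertices into one arc). I would then show that for each such eligible vertex $v$ there is a genuine partition having $v$ as its unique interior vertex precisely when $v$ is neither a source nor a sink, the two-vertex arcs being automatically realizable and the double gap splitting cleanly into two edges whenever it is not the triple.

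Consequently, $D$ fails the $\alpha$-property for this $S$ if and only if every vertex of $\{x_1,x_3,\dots,x_{2k-1}\}\cup\{x_{2k},x_{2k+1}\}$ is a source or a sink. The final step, which I expect to be the main obstacle, is to check that this failure configuration is exactly an anti-directed odd cycle: the vertices permitted to be neither source nor sink are precisely the $k-1$ single-gap vertices, and reading the cycle starting at $x_{2k-1}$ exhibits the source/sink pattern required by clause (ii) of the definition (four consecutive source/sink vertices $x_{2k-1},x_{2k},x_{2k+1},x_1$ followed by alternation), matching the template $x_1,x_2,x_3,x_4,x_6,\dots,x_{2k}$ after the rotation $x_i\mapsto y_{i+3}$. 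Thus if $D$ failed the $\alpha$-property it would itself be an induced anti-directed odd cycle, contradicting $D\in\mathfrak{B}$; hence some eligible vertex is not a source or sink and the required $S$-path partition exists. Aligning the bookkeeping of this relabelling with the asymmetric index set appearing in the definition is the delicate point, and I would verify it by comparing the two source/sink patterns position by position.
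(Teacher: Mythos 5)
Your proposal is correct and follows essentially the same route as the paper: dispose of proper induced subdigraphs (whose underlying graphs are disjoint unions of paths) and of the even cycle, then, for the odd cycle with the maximum stable set normalized to have a single double gap, either find a directed path of length two centred at a vertex of \(S\cup\{x_{2k},x_{2k+1}\}\) that is neither a source nor a sink and pair the remaining vertices into single arcs, or conclude that \(D\) is an anti-directed odd cycle, contradicting \(D\in\fB\). The paper reaches the first regime and the even case by citing Berge's result for perfect underlying graphs rather than by your direct induction on paths, but that is a cosmetic difference. One caveat: your ``key structural observation'' that every \(S\)-path partition consists of exactly one three-vertex arc and otherwise single edges is false as stated --- a partition may use a four-vertex arc such as \(\{x_{2k},x_{2k+1},x_1,x_2\}\) together with a singleton arc \(\{x_{2i-1}\}\) --- so the ``only if'' half of your equivalence is not justified as written; fortunately only the constructive half (some eligible vertex is not a source or sink \(\Rightarrow\) a partition exists) is needed for the contradiction, and that half you establish correctly.
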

\begin{proof}
	Let \(D\) be a digraph as in the statement.
	It is easy to verify that the result holds when \(|V(D)| = 3\), so assume that \(|V(D)| > 3\).
	Let \(H\) be a proper induced subdigraph of \(D\).
	Note that each component of \(U(H)\) is a path, and hence \(U(H)\) is a bipartite graph, which is perfect.
	Since Conjecture~\ref{con:diperfect} was verified for digraphs whose underlying graphs are perfect~\cite{Berge1982b}, \(H\) satisfies the \(\alpha\)-property.
	Thus, in order to prove the result, it just remains to show that \(D\) satisfies the \(\alpha\)-property.
	If \(|V(D)|\) is even, then \(U(D)\) is bipartite, and the result follows as before.
	Thus suppose that \(|V(D)|\) is odd.
	Let \(U(D) = x_0x_1 \cdots x_{2k} x_0\), and note that \(\alpha(D) = k\).
	Let \(S\) be a maximum stable set of \(D\).
	We may assume, without loss of generality, that \(S = \{x_0, x_2, x_4\ldots, x_{2k - 2}\}\).
	Since \(x_ix_{i + 1} \in E(U(D))\), it follows that the set \(\{x_ix_{i+1}, x_{i+1}x_i\} \cap A(D)\) is not empty, and hence let \(e_{i,i+1}\) be an arc in such set.
	
	Suppose that there exists a path \(P \subset D[\{x_{2k - 2}, x_{2k - 1}, x_{2k}, x_0\}]\) of length two such that \(V(P) = \{x_{2k}, x_{2k - 1}, z\}\), where either \(z = x_0\) or \(z = x_{2k - 2}\).
	Suppose, without loss of generality, that \(z = x_0\), and hence \(\{P\} \cup \{e_{i, i+1} \: x_{i + 1} \in S\}\) is an \(S\)-path partition of \(D\) and the result holds.
	Thus, we may assume that such path does not exist, and hence \(A(D[\{x_{2k - 2}, x_{2k - 1}, x_{2k}, x_0\}])\) is either \(\{x_{2k - 2}x_{2k - 1}, x_{2k} x_{2k -1}, x_{2k} x_0\}\) or \(\{x_{2k - 1}x_{2k - 2},\) \( x_{2k -1}x_{2k} ,  x_0x_{2k}\}\).
	Suppose, without loss of generality, that the former holds.
	If there exists a path \(P = u_1x_iu_2\subset D\) such that \(x_i \in S\) and \(\{u_1, u_2\} = \{x_{i - 1}, x_{i + 1}\}\) (where the index is taken modulus \(2k + 1\)), then \(\{P\} \cup \{e_{j-1, j} \colon x_{j} \in S \tand j < i\} \cup \{e_{j, j+1} \: x_j \in S \tand j > i\}\) is an \(S\)-path partition of \(D\) and the result holds.
	Otherwise, every \(x_i \in S\) is either a source or a sink, and hence \(D\) is an anti-directed odd  cycle, a contradiction to \(D \in \fB\).
\end{proof}

\begin{lemma}\label{lem:be-diperfect-cycle}
	If $D \in \fD$ is a digraph such that $U(D)$ is a cycle, then $D$ is BE-diperfect.
\end{lemma}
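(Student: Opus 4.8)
The plan is to mirror the proof of Lemma~\ref{lem:diperfect-cycle}, replacing the appeal to Berge's result for perfect underlying graphs by Theorem~\ref{the:bed-perfect-graphs}, and replacing the forbidden anti-directed odd cycle (\(\fB\)) by the forbidden blocking odd cycle (\(\fD\)). Since \(\fD\) is closed under taking induced subdigraphs, to prove that \(D\) is BE-diperfect it suffices to show that every induced subdigraph of \(D\) satisfies the BE-property. For a \emph{proper} induced subdigraph \(H\), deleting at least one vertex from a cycle leaves a disjoint union of paths, so \(U(H)\) is bipartite and hence perfect; as \(H \in \fD\), Theorem~\ref{the:bed-perfect-graphs} shows that \(H\) is BE-diperfect, and in particular \(H\) satisfies the BE-property. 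Thus the whole content of the lemma is to show that \(D\) itself satisfies the BE-property.

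When \(|V(D)|\) is even, \(U(D)\) is bipartite and hence perfect, so \(D\) satisfies the BE-property directly by Theorem~\ref{the:bed-perfect-graphs}; and when \(|V(D)| = 3\), the digraph \(D\) is semicomplete and lies in \(\fD\), so Theorem~\ref{the:semicomplete-be-diperfect} applies. It therefore remains to treat \(|V(D)| = 2k+1\) with \(k \geq 2\). Writing \(U(D) = x_0 x_1 \cdots x_{2k} x_0\), one has \(\alpha(D) = k\), and by the symmetry of the cycle I would assume, without loss of generality, that a given maximum stable set is \(S = \{x_0, x_2, \ldots, x_{2k-2}\}\), so that exactly one ``gap'' of size three, namely \(x_{2k-2}, x_{2k-1}, x_{2k}, x_0\), occurs.

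The key observation I would exploit is that a two-vertex path automatically has both of its vertices as endpoints, so pairing each \(x_{2i} \in S\) with an adjacent non-\(S\) vertex via the arc joining them produces BE-admissible pieces for free; the only leftover vertex is \(x_{2k}\), which must be absorbed into a single three-vertex path whose \(S\)-endpoint is either \(x_{2k-2}\) or \(x_0\). Concretely, I would examine the two consecutive vertices \(x_{2k-1}\) and \(x_{2k}\): if both were a source or a sink, then taking them as a blocking pair would exhibit \(D\) as a blocking odd cycle, contradicting \(D \in \fD\). Hence at least one of them is neither a source nor a sink. If \(x_{2k-1}\) is neither, then since its only neighbours are \(x_{2k-2}\) and \(x_{2k}\) there is a directed path on \(\{x_{2k-2}, x_{2k-1}, x_{2k}\}\) with \(x_{2k-2}\) as an endpoint; together with the two-vertex paths on \(\{x_{2i}, x_{2i+1}\}\) for \(i = 0, \ldots, k-2\) this gives an \(S_{\mathrm{BE}}\)-path partition. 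If instead \(x_{2k}\) is neither a source nor a sink, the symmetric construction uses the directed path on \(\{x_{2k-1}, x_{2k}, x_0\}\) with endpoint \(x_0 \in S\), together with the two-vertex paths on \(\{x_{2i-1}, x_{2i}\}\) for \(i = 1, \ldots, k-1\). In either case every path carries exactly one vertex of \(S\) at one of its ends, so \(D\) satisfies the BE-property.

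The only mildly fiddly point, which I expect to be routine, is checking that a cycle vertex that is neither a source nor a sink really does admit the required directed path on its triple with its two cycle-neighbours at the ends: since \(U(D)\) is a chordless cycle, those two neighbours are non-adjacent (using \(k \geq 2\)), so any Hamilton path of the triple has the middle vertex in the middle, and a short case analysis over the arc orientations confirms that ``neither source nor sink'' forces one of the two admissible orientations. The genuine conceptual step — recognising precisely when no suitable three-vertex path exists — is exactly the blocking-pair condition, which is why \(\fD\) rather than \(\fB\) is the correct hypothesis and which is the direct analogue of the place in Lemma~\ref{lem:diperfect-cycle} where the anti-directed odd cycle is ruled out.
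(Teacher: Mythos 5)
Your proposal is correct and follows essentially the same route as the paper: handle proper induced subdigraphs and the even case via Theorem~\ref{the:bed-perfect-graphs}, reduce the odd case to the single long gap \(x_{2k-2},x_{2k-1},x_{2k},x_0\), and use the fact that \((x_{2k-1},x_{2k})\) cannot be a blocking pair to produce the one three-vertex path with its \(S\)-endvertex at \(x_{2k-2}\) or \(x_0\), completing the partition with two-vertex paths. Your treatment is if anything slightly more explicit than the paper's (the \(|V(D)|=3\) case via Theorem~\ref{the:semicomplete-be-diperfect} and the source/sink case analysis), but it is the same argument.
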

\begin{proof}
	Let \(D\) be a digraph as in the statement.
	It is easy to verify that the result holds when \(|V(D)| = 3\), so assume that \(|V(D)| > 3\).
	Let \(H\) be a proper induced subdigraph of \(D\).
	Note that each component of \(U(H)\) is a path, and hence \(U(H)\) is a bipartite graph.
	Since bipartite graphs are perfect graphs, \(H\) satisfies the BE-property by Theorem~\ref{the:bed-perfect-graphs}.
	Thus, it just remains to show that \(D\) satisfies the BE-property.
	If \(|V(D)|\) is even, then \(U(D)\) is also bipartite, and the result follows as before.
	Thus suppose that \(|V(D)|\) is odd.
	Let \(U(D) = x_0x_1 \cdots x_{2k} x_0\), and note that \(\alpha(D) = k\).
	Let \(S\) be a maximum stable set of \(D\).
	We may assume, without loss of generality, that \(S = \{x_0, x_2, x_4\ldots, x_{2k - 2}\}\).
	Since \(x_ix_{i + 1} \in E(U(D))\), the set \(\{x_ix_{i+1}, x_{i+1}x_i\} \cap A(D)\) is not empty, and hence let \(e_{i,i+1}\) be an arc in such set.
	Since \(x_{2k - 1}, x_{2k} \notin S\) and  \((x_{2k - 1},x_{2k})\) is not a blocking pair, there exists a path \(P \subset D[\{x_{2k - 2}, x_{2k - 1}, x_{2k}, x_0\}]\) of length two such that \(V(P) = \{x_{2k}, x_{2k - 1}, z\}\), where either \(z = x_0\) or \(z = x_{2k - 2}\) and \(P\) starts or ends at~\(z\).
	Suppose, without loss of generality, that \(z = x_0\), and hence \(\{P\} \cup \{e_{i, i+1} \: x_{i + 1} \in S\}\) is an \SBE-path partition of \(D\) and the result follows.
\end{proof}

The next result verifies Conjecture~\ref{con:diperfect} for digraphs whose underlying graphs are series-parallel.

\begin{theorem}\label{the:diperfect-series-parallel}
	If \(D \in \fB\) is a digraph such that $U(D)$ is series-parallel, then $D$ is \(\alpha\)-diperfect.
\end{theorem}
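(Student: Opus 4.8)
The plan is to argue by contradiction using a minimal counterexample, reducing \(D\) to strictly smaller pieces by means of the partition lemmas of Section~\ref{sec:partition-lemmas}. The first point to record is that the hypothesis is hereditary: if \(D \in \fB\) and \(U(D)\) is series-parallel, then for every induced subdigraph \(D'\) of \(D\) we have \(D' \in \fB\) (an induced anti-directed odd cycle of \(D'\) would be one of \(D\)) and \(U(D')\) is again series-parallel (an induced subgraph of a series-parallel graph is series-parallel). Thus I would let \(D\) be a counterexample minimizing \(|V(D)|\); by minimality every proper induced subdigraph of \(D\) is \(\alpha\)-diperfect and so satisfies the \(\alpha\)-property, while \(D\) itself does not. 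The small cases \(|V(D)| \le 2\) are immediate (when \(U(D)\) is an edge, \(D\) is semicomplete and Theorem~\ref{the:redei} applies; otherwise \(U(D)\) is edgeless), so I may assume \(|V(D)| \ge 3\).

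The argument then splits according to the connectivity of \(U(D)\), and in each non-degenerate case the goal is to produce a vertex partition \(V(D)=V(H_1)\cup V(H_2)\) into two proper induced subdigraphs with \(\alpha(D)=\alpha(H_1)+\alpha(H_2)\); since \(H_1,H_2\) lie in the same hereditary class, minimality gives that each satisfies the \(\alpha\)-property, whence Lemma~\ref{lem:dip-vertex-partition-sum-alpha} forces \(D\) to satisfy the \(\alpha\)-property, a contradiction. If \(U(D)\) is disconnected, I take \(H_1\) to be one connected component and \(H_2\) the rest, for which \(\alpha\) is clearly additive. If \(U(D)\) is connected but has a cut vertex \(v\), then \(\{v\}\) is a clique cut, and Lemma~\ref{lem:clique-cut-alpha=alpha1+alpha2} supplies the required partition of \(U(D)\), which I lift to \(D\) by setting \(H_i = D[V(H_i)]\) (the value of \(\alpha\) depends only on the underlying graph).

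The remaining case is when \(U(D)\) is \(2\)-connected. Here I distinguish two subcases. If \(U(D)\) is itself a cycle, then Lemma~\ref{lem:diperfect-cycle} already tells us \(D\) is \(\alpha\)-diperfect, directly contradicting that \(D\) fails the \(\alpha\)-property. Otherwise \(U(D)\) is \(2\)-connected, simple, on \(n \ge 3\) vertices and is not a cycle; by Lemma~\ref{lem:sp-2-connected-cycle-two-vertices-degree-gt-2} it contains an induced cycle \(C\) with at most two vertices of degree greater than two, and since \(U(D)\) is not a cycle this \(C\) is a \emph{proper} induced cycle. Lemma~\ref{lem:cycle-at-most-two-vertices-degree-greater-2} then yields the partition of \(U(D)\), which I again lift to \(D\) and feed into Lemma~\ref{lem:dip-vertex-partition-sum-alpha} to reach the final contradiction.

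I expect the only genuinely delicate point to be the cycle subcase, which is exactly why the auxiliary Lemma~\ref{lem:diperfect-cycle} is isolated: when \(U(D)\) equals a single cycle there is no proper induced cycle to feed into the partition machinery, so the additivity of \(\alpha\) cannot be invoked and one must instead analyze directly how an \(S\)-path partition is built along the cycle, using that \(D \in \fB\) rules out the anti-directed pattern that would obstruct it. The rest of the proof is a routine verification that each structural reduction produces proper induced subdigraphs of the same hereditary class with additive stability number, so that the induction hypothesis and Lemma~\ref{lem:dip-vertex-partition-sum-alpha} close every case.
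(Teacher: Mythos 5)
Your proposal is correct and follows essentially the same route as the paper's own proof: a minimum counterexample, the cut-vertex/clique-cut reduction via Lemma~\ref{lem:clique-cut-alpha=alpha1+alpha2}, the $2$-connected case handled through Lemmas~\ref{lem:sp-2-connected-cycle-two-vertices-degree-gt-2}, \ref{lem:diperfect-cycle} and~\ref{lem:cycle-at-most-two-vertices-degree-greater-2}, and the final contradiction via Lemma~\ref{lem:dip-vertex-partition-sum-alpha}. The only difference is that you spell out the disconnected and small-order cases explicitly, which the paper leaves implicit.
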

\begin{proof}
    Towards a contradiction, suppose that the result does not hold, and let $D$ be a counterexample with the smallest number of vertices.
    It not hard to check that \(D\) has order at least~\(3\).
    Moreover, since every subgraph of a series-parallel graph is also a series-parallel graph, we have every proper induced subdigraph of \(D\) satisfies the \(\alpha\)-property.
    We claim that \(D\) can be partitioned into two proper induced subdigraphs \(D_1\) and \(D_2\) such that \(\alpha(D) = \alpha(D_1) + \alpha(D_2)\).
    If \(D\) has a cut vertex, then the claim follows by Lemma~\ref{lem:clique-cut-alpha=alpha1+alpha2}.
    Thus suppose that \(D\) has no cut vertex, and hence \(U(D)\) is a \(2\)-connected series-parallel graph, and by Lemma~\ref{lem:sp-2-connected-cycle-two-vertices-degree-gt-2}, there exists an induced cycle \(C\) in \(U(D)\) containing at most two vertices with degree greater than \(2\).
    If \(U(D)\) is isomorphic to \(C\), then by Lemma~\ref{lem:diperfect-cycle}, \(D\) is \(\alpha\)-diperfect, a contradiction.
    Thus \(C\) is a proper induced cycle of \(U(D)\), and hence the claim follows by Lemma~\ref{lem:cycle-at-most-two-vertices-degree-greater-2}.
    Therefore \(D\) satisfies the \(\alpha\)-property by Lemma~\ref{lem:dip-vertex-partition-sum-alpha}, a contradiction.
\end{proof}

Next theorem verifies Conjecture~\ref{con:bed} for digraphs whose underlying graphs are series-parallel.
We omit its proof since it is analogous to the proof of Theorem~\ref{the:diperfect-series-parallel}, with the exception that we use Lemmas~\ref{lem:bed-vertex-partition-sum-alpha}  and~\ref{lem:be-diperfect-cycle} instead of Lemmas~\ref{lem:dip-vertex-partition-sum-alpha} and~\ref{lem:diperfect-cycle}, respectively.

\begin{theorem}\label{the:be-diperfect-series-parallel}
	If \(D \in \fD\) is a digraph such that $U(D)$ is series-parallel, then $D$ is BE-diperfect.
\end{theorem}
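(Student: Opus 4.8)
The plan is to follow the proof of Theorem~\ref{the:diperfect-series-parallel} essentially line by line, replacing every occurrence of the \(\alpha\)-property by the BE-property, the membership \(D \in \fB\) by \(D \in \fD\), and the two auxiliary results tailored to the \(\alpha\)-property (Lemmas~\ref{lem:dip-vertex-partition-sum-alpha} and~\ref{lem:diperfect-cycle}) by their BE-analogues (Lemmas~\ref{lem:bed-vertex-partition-sum-alpha} and~\ref{lem:be-diperfect-cycle}). Concretely, I would argue by contradiction: suppose the statement fails and let \(D \in \fD\) with \(U(D)\) series-parallel be a counterexample of minimum order. Since series-parallel graphs are closed under taking subgraphs and \(\fD\) is closed under taking induced subdigraphs, every proper induced subdigraph \(D'\) of \(D\) again satisfies \(D' \in \fD\) with \(U(D')\) series-parallel; hence, by minimality, each such \(D'\) satisfies the BE-property. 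A routine check shows \(D\) has order at least \(3\).

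The heart of the argument is to exhibit a partition of \(D\) into two proper induced subdigraphs \(D_1\) and \(D_2\) with \(\alpha(D) = \alpha(D_1) + \alpha(D_2)\). Once such a partition is in hand, each \(D_i\) satisfies the BE-property by minimality, and Lemma~\ref{lem:bed-vertex-partition-sum-alpha} forces \(D\) itself to satisfy the BE-property, contradicting the choice of \(D\). To produce the partition I would split into cases exactly as before. If \(U(D)\) has a cut vertex, then a single vertex forms a clique cut, and Lemma~\ref{lem:clique-cut-alpha=alpha1+alpha2} supplies the desired partition. Otherwise \(U(D)\) is \(2\)-connected, and Lemma~\ref{lem:sp-2-connected-cycle-two-vertices-degree-gt-2} yields an induced cycle \(C\) of \(U(D)\) containing at most two vertices of degree greater than two.

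Two subcases then arise. If \(U(D)\) is isomorphic to \(C\), then \(U(D)\) is a cycle and, since \(D \in \fD\), Lemma~\ref{lem:be-diperfect-cycle} already guarantees that \(D\) is BE-diperfect, contradicting that \(D\) is a counterexample. If instead \(C\) is a proper induced cycle of \(U(D)\), then \(C\) is a proper induced subgraph containing at most two vertices of degree greater than two, so Lemma~\ref{lem:cycle-at-most-two-vertices-degree-greater-2} applied to \(U(D)\) provides a vertex partition into two proper induced subgraphs \(H_1, H_2\) with \(\alpha(U(D)) = \alpha(H_1) + \alpha(H_2)\); the corresponding induced subdigraphs of \(D\) inherit this additivity. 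Feeding this partition into Lemma~\ref{lem:bed-vertex-partition-sum-alpha} completes the contradiction.

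I do not expect a genuine obstacle internal to this theorem, since the combinatorial skeleton is identical to that of Theorem~\ref{the:diperfect-series-parallel}; the only place where the BE-setting differs in substance is the base case of a full odd cycle, and that difficulty has already been absorbed into Lemma~\ref{lem:be-diperfect-cycle} (whose proof, unlike its \(\alpha\)-counterpart, invokes the blocking-pair hypothesis rather than the anti-directed one). The single point deserving a moment's care when writing is to confirm that the partition lemmas of Section~\ref{sec:partition-lemmas}, stated purely in terms of underlying graphs and stability numbers, transfer verbatim to the BE-setting; they do, because they never refer to path partitions and produce only the additivity \(\alpha(D) = \alpha(D_1) + \alpha(D_2)\) that both Lemmas~\ref{lem:dip-vertex-partition-sum-alpha} and~\ref{lem:bed-vertex-partition-sum-alpha} consume.
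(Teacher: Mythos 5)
Your proposal is correct and coincides with the paper's intended argument: the authors explicitly omit the proof, stating that it is the proof of Theorem~\ref{the:diperfect-series-parallel} with Lemmas~\ref{lem:bed-vertex-partition-sum-alpha} and~\ref{lem:be-diperfect-cycle} substituted for Lemmas~\ref{lem:dip-vertex-partition-sum-alpha} and~\ref{lem:diperfect-cycle}, which is exactly the substitution you carry out. Your closing observation that the partition lemmas of Section~\ref{sec:partition-lemmas} transfer verbatim because they concern only stability numbers is the right justification for why the skeleton survives the change of setting.
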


\section{Locally in-semicomplete digraphs}
\label{sec:in-semicomplete}

In this section, we verify Conjectures~\ref{con:diperfect} and~\ref{con:bed} for in-semicomplete digraphs.
Bang-Jensen, Huang, and Prisner~\cite{BaHuPri1993} presented the following characterization for strong in-semicomplete digraphs.

\begin{theorem}[Bang-Jensen, Huang, and Prisner, 1993]\label{the:lis-hamiltonian-iff-strong}
	An in-semicomplete digraph \(D\) of order at least \(2\) is hamiltonian if, and only if, \(D\) is strong.
\end{theorem}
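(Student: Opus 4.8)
The plan is to prove the two implications separately. The \emph{only if} direction is immediate and in fact holds for every digraph: if \(C\) is a Hamilton cycle of \(D\), then for any ordered pair \(u, v \in V(D)\) the relevant arc of \(C\) traversed from \(u\) yields a path from \(u\) to \(v\), so \(D\) is strong. All the work is in the \emph{if} direction, where the in-semicomplete hypothesis must be used crucially. I would argue by the standard extremal technique for (locally) semicomplete-type digraphs: take a longest cycle and show that it must absorb every outside vertex.

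Concretely, since a strong digraph on at least two vertices contains a cycle, a longest cycle \(C = v_1 v_2 \cdots v_k v_1\) exists; assume toward a contradiction that \(R := V(D) \setminus V(C) \ne \emptyset\). The key local step is an insertion/propagation lemma. Suppose some \(u \in R\) has an arc \(u v_m \in A(D)\). Then \(v_{m-1}\) and \(u\) both lie in \(N^-(v_m)\), which induces a semicomplete digraph by hypothesis, so \(u\) and \(v_{m-1}\) are adjacent. If \(v_{m-1} u \in A(D)\), then inserting \(u\) between \(v_{m-1}\) and \(v_m\) produces a cycle longer than \(C\), a contradiction; hence \(u \mapsto v_{m-1}\) with no arc back. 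Iterating this around \(C\) forces \(u \mapsto V(C)\), and moreover no arc of \(D\) runs from \(V(C)\) to \(u\), since any arc \(v_j u\) together with \(u v_{j+1}\) would again permit an insertion.

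For the closing contradiction, let \(\sA := \{u \in R : u \text{ has an out-neighbor on } C\}\). Strong connectivity forces at least one arc from \(R\) into \(C\), so \(\sA \ne \emptyset\); by the previous step every \(a \in \sA\) satisfies \(a \mapsto V(C)\), and there is no arc from \(V(C)\) to \(\sA\). Now fix \(a \in \sA\) and, using that \(D\) is strong, take a shortest path \(\pi = x_0 x_1 \cdots x_t = a\) from the set \(V(C)\) to \(a\); minimality forces \(x_1, \dots, x_t \in R\), and \(t \ge 2\) because no arc enters \(\sA\) from \(C\). Writing \(x_0 = v_1\) and using \(a \mapsto V(C)\) (in particular \(a v_2 \in A(D)\)), the sequence \(x_0 x_1 \cdots x_{t-1} a v_2 v_3 \cdots v_k\) closes into a cycle via the arc \(v_k v_1 = v_k x_0\). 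This cycle contains all of \(V(C)\) together with the additional vertices \(x_1, \dots, x_{t-1}, a\), so it is strictly longer than \(C\), contradicting maximality. This contradiction shows \(R = \emptyset\), i.e.\ \(C\) is a Hamilton cycle.

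The crux, and the step I expect to demand the most care, is the propagation argument establishing \(u \mapsto V(C)\): it is exactly here that in-semicompleteness (rather than full semicompleteness) is exploited, and one must verify both that each insertion produces a genuinely longer cycle and that the ``no arc back into \(u\)'' conclusion survives all the way around \(C\). Once this domination structure is secured, the final longer-cycle construction is routine. I would also treat the degenerate small cases (notably \(k = 2\), where \(C\) is a digon) separately, since the indices in the insertion step are read modulo \(k\).
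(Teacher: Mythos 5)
Your proposal is correct. Note, however, that the paper does not prove this statement at all: Theorem~\ref{the:lis-hamiltonian-iff-strong} is imported verbatim from Bang-Jensen, Huang, and Prisner~\cite{BaHuPri1993}, so there is no internal proof to compare against. Your argument is the standard longest-cycle insertion proof for locally in-semicomplete digraphs, and it is complete. The two points that genuinely need care both check out: (i) the propagation step is sound, since from \(uv_m \in A(D)\) and the cycle arc \(v_{m-1}v_m\) one gets \(u, v_{m-1} \in N^-(v_m)\), hence adjacency, and an arc \(v_{m-1}u\) would allow inserting \(u\) into \(C\) between \(v_{m-1}\) and \(v_m\) using the already-established arc \(uv_m\); iterating backwards around \(C\) (including the final return step that rules out \(v_m u\)) yields \(u \mapsto V(C)\) in the paper's sense of \(\mapsto\), i.e., with no arcs back into \(u\); and (ii) the closing construction is a genuine cycle of length \(k + t > k\), since \(x_1, \ldots, x_{t-1}, a\) lie outside \(V(C)\) by minimality of \(\pi\), and the needed arcs \(x_{t-1}a\), \(av_2\), and \(v_kx_0\) are all available. (In fact \(t \geq 1\) already suffices for the contradiction; the observation \(t \geq 2\) is true but not needed, and the \(k = 2\) digon case requires no separate treatment beyond reading indices modulo \(k\).)
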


Note that if a digraph contains a Hamilton cycle, then it is easy to show it satisfies the BE-property.
So, as a corollary of Theorem~\ref{the:lis-hamiltonian-iff-strong}, it follows that.

\begin{corollary}\label{cor:lis-strong}
  If \(D\) is a strong in-semicomplete digraph, then \(D\) satisfies the BE-property.
\end{corollary}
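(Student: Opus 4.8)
The plan is to reduce the whole statement to the existence of a Hamilton cycle, which is exactly what Theorem~\ref{the:lis-hamiltonian-iff-strong} supplies, and then to cut that cycle cleverly at the vertices of the stable set. First I would dispose of the degenerate case $|V(D)| = 1$: here the unique vertex $v$ is the only maximum stable set $S = \{v\}$, and the single-vertex path is an \SBE-path partition. So assume $|V(D)| \geq 2$. Since $D$ is strong and in-semicomplete, Theorem~\ref{the:lis-hamiltonian-iff-strong} guarantees that $D$ is hamiltonian; fix a Hamilton cycle $C = v_1 v_2 \cdots v_n v_1$ of $D$.

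Now fix an arbitrary maximum stable set $S$ of $D$, which is nonempty since $n \geq 1$. The central observation is that no two vertices of $S$ are consecutive along $C$: consecutive vertices of $C$ are joined by an arc and hence adjacent, whereas $S$ is stable. For each $s \in S$ let $s^-$ denote the predecessor of $s$ along $C$, and delete from $C$ the $|S|$ arcs $\{s^- s \: s \in S\}$. These arcs are pairwise distinct (they have distinct heads), so removing them from the $n$-arc cycle leaves a spanning subgraph with $n - |S|$ arcs and no cycle; therefore it splits into exactly $|S|$ directed paths that together partition $V(D)$.

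Reading each resulting path in the direction of $C$, it begins at some $s \in S$ — the vertex immediately following a deleted arc, which has lost its incoming cycle-arc — and continues until the predecessor of the next vertex of $S$ encountered along $C$; in particular it contains no vertex of $S$ other than its initial vertex $s$. Hence every path in this partition meets $S$ in exactly one vertex, and that vertex is an endpoint, so the partition is an \SBE-path partition. Since $S$ was an arbitrary maximum stable set, $D$ satisfies the BE-property. There is essentially no obstacle here: the substantive content lies entirely in Theorem~\ref{the:lis-hamiltonian-iff-strong}, and the only point needing a moment's care is the claim that cutting $C$ immediately before each vertex of $S$ isolates precisely one $S$-vertex per path, which is exactly the non-consecutiveness observation above.
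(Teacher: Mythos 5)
Your proof is correct and follows exactly the paper's route: the paper likewise invokes Theorem~\ref{the:lis-hamiltonian-iff-strong} to get a Hamilton cycle and then asserts that a hamiltonian digraph ``easily'' satisfies the BE-property, which is precisely the cutting argument you spell out. Your write-up simply makes explicit the step the paper leaves to the reader, and it does so correctly (including the non-consecutiveness of $S$ along the cycle and the trivial one-vertex case).
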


Thus in order to verify Conjectures~\ref{con:diperfect} and~\ref{con:bed} for in-semicomplete digraphs, it remains to check the conjecture for non-strong in-semicomplete digraphs.
The following is a useful result for non-strong in-semicomplete digraphs presented by Bang-Jensen and Gutin~\cite{BangJensenGutin1998}.
	
\begin{lemma}[Bang-Jensen and Gutin, 1998]\label{lem:dominate-one-dominate-all-strong-component}
	Let $D$ be an in-semicomplete digraph, and let $X$ and $Y$ be two distinct strong components of $D$.
	If there exist a vertex $x \in V(X)$ and a vertex $y \in V(Y)$  such that \(xy \in A(D)\), then \(x \mapsto Y\).
\end{lemma}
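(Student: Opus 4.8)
The plan is to exploit two facts: that there can be no arc from \(Y\) back to \(X\) (since \(X\) and \(Y\) are distinct strong components), and that the in-semicomplete property forces any two in-neighbours of a common vertex to be adjacent. Combining these lets us push the single domination \(xy\) along the strong connectivity of \(Y\) until it covers all of \(V(Y)\), which is exactly what \(x \mapsto Y\) asserts.

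First I would establish that no arc goes from \(Y\) to \(X\). Suppose, for contradiction, that \(y_1x_1 \in A(D)\) for some \(y_1 \in V(Y)\) and \(x_1 \in V(X)\). Because \(Y\) is strong there is a path from \(y\) to \(y_1\) inside \(Y\), and because \(X\) is strong there is a path from \(x_1\) to \(x\) inside \(X\); concatenating these with the arcs \(xy\) and \(y_1x_1\) produces a closed walk through vertices of both \(X\) and \(Y\), which would place \(V(X) \cup V(Y)\) inside a single strong component, contradicting that \(X\) and \(Y\) are distinct strong components. In particular \(y'x \notin A(D)\) for every \(y' \in V(Y)\), which is precisely the ``no back arc'' half of the conclusion \(x \mapsto Y\).

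It remains to show \(xy' \in A(D)\) for every \(y' \in V(Y)\). Fix such a \(y'\). Since \(Y\) is strong, choose a directed path \(y' = z_0 z_1 \cdots z_m = y\) lying entirely inside \(Y\). I would prove \(xz_i \in A(D)\) by downward induction on \(i\). The base case \(z_m = y\) is the hypothesis. For the inductive step, assume \(x \mapsto z_{i+1}\); since also \(z_i z_{i+1} \in A(D)\), both \(x\) and \(z_i\) belong to \(N^-(z_{i+1})\), which induces a semicomplete digraph by the in-semicomplete hypothesis, so \(x\) and \(z_i\) are adjacent. As \(z_i \in V(Y)\) and \(x \in V(X)\), an arc \(z_i x\) would be an arc from \(Y\) to \(X\), which the previous paragraph ruled out; hence \(xz_i \in A(D)\). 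Taking \(i = 0\) yields \(xy' \in A(D)\), and since \(y'\) was arbitrary we conclude \(x \mapsto Y\).

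The only delicate point is keeping the propagation path inside \(Y\): the argument relies on each \(z_i\) being a vertex of \(Y\), so that the forbidden orientation \(z_i x\) is genuinely an arc from \(Y\) to \(X\) and adjacency therefore has a unique admissible direction. I expect the main (though mild) obstacle to be applying in-semicompleteness to the correct common out-neighbour \(z_{i+1}\) at each step; once the ``no arc from \(Y\) to \(X\)'' fact is in place, the rest of the induction is routine.
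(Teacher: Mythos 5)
Your proof is correct and complete: the closed-walk argument correctly rules out any arc from \(Y\) to \(X\), and the downward induction along a \(y'\)--\(y\) path in \(Y\), applying in-semicompleteness at the common out-neighbour \(z_{i+1}\), is exactly the standard argument for this fact. Note that the paper itself gives no proof of this lemma (it is cited from Bang-Jensen and Gutin, 1998), so there is nothing to compare against; your write-up would serve as a self-contained proof.
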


The \emph{strong component digraph} of \(D\), denoted by \(SC(D)\),  is the digraph obtained by contracting strong components of \(D\) and deleting any parallel edges and loops obtained in the process.
More precisely, if \(X_1, X_2, \ldots, X_\ell\) are the strong components of \(D\), then \(V(SC(D)) = \{x_i \: i = 1, 2, \ldots, \ell\}\) and \(E(SC(D)) = \{x_ix_j \: u \in V(X_i), v \in V(X_j), \tand uv \in A(D)\}\).
A well-known property about strong component digraphs is that they are acyclic.
Next result verifies Conjecture~\ref{con:diperfect} for in-semicomplete digraphs.

\begin{theorem}\label{the:lis-diperfect}
	If \(D\) is an in-semicomplete digraph, then $D$ is \(\alpha\)-diperfect.
\end{theorem}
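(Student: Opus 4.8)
The plan is to prove the stronger statement that \emph{every} in-semicomplete digraph satisfies the \(\alpha\)-property; since the class of in-semicomplete digraphs is closed under taking induced subdigraphs, this immediately yields \(\alpha\)-diperfection. (As a sanity check, note that no hypothesis \(D \in \fB\) is needed: an induced anti-directed odd cycle contains a sink of cycle-degree two whose two in-neighbours are non-adjacent, contradicting the semicompleteness of that sink's in-neighbourhood; thus in-semicomplete digraphs automatically avoid induced anti-directed odd cycles, so verifying the \(\alpha\)-property alone settles Conjecture~\ref{con:diperfect} for the class.) I would argue by induction on \(|V(D)|\). If \(D\) is strong, then by Theorem~\ref{the:lis-hamiltonian-iff-strong} it is hamiltonian (or a single vertex), and cutting a Hamilton cycle immediately before each vertex of a maximum stable set \(S\) gives an \(S\)-path partition; this is exactly the content of Corollary~\ref{cor:lis-strong}, so the strong case is done.

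For the non-strong case I would exploit the acyclic strong-component digraph \(SC(D)\) together with the domination Lemma~\ref{lem:dominate-one-dominate-all-strong-component}. Fix a maximum stable set \(S\) and order the strong components \(X_1, \ldots, X_m\) in a topological order consistent with \(SC(D)\). Each \(X_i\) is strong and in-semicomplete, hence has a Hamilton cycle (or is a singleton); for every component with \(S \cap X_i \neq \emptyset\) I would cut its Hamilton cycle at the \(|S \cap X_i|\) vertices of \(S \cap X_i\), producing \(|S \cap X_i|\) internally disjoint paths, each beginning at its unique vertex of \(S\). Taken over all components, these paths cover \(\bigcup_{i:\,S\cap X_i\neq\emptyset} V(X_i)\) and use each vertex of \(S\) exactly once. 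The only vertices left uncovered lie in the \(S\)-free components (those with \(S \cap X_i = \emptyset\)), and the heart of the proof is to absorb them into the existing paths.

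To absorb an \(S\)-free component \(X_i\), I would use its Hamilton path \(v_1 \cdots v_p\) and the domination structure: if there is an arc from \(X_i\) to a downstream component \(X_j\), then by Lemma~\ref{lem:dominate-one-dominate-all-strong-component} some vertex of \(X_i\) dominates all of \(X_j\), which lets me prepend a suitable traversal of \(X_i\) to the path in \(X_j\) that starts at a vertex of \(S\); symmetrically, an incoming dominating arc lets me append \(X_i\) to a path ending in an upstream component. The alternative, cleaner route is to convert this into an additive partition: show that a non-strong \(D\) splits into proper induced subdigraphs \(H_1, H_2\) with \(\alpha(D) = \alpha(H_1) + \alpha(H_2)\) by cutting off a terminal (sink) strong component \(X\) and using that every outside vertex either dominates all of \(X\) or is non-adjacent to all of \(X\); then Lemma~\ref{lem:dip-vertex-partition-sum-alpha} and the induction hypothesis finish the argument, after first deleting any universal vertex via Corollary~\ref{cor:universal-diperfect}. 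The main obstacle, in either formulation, is precisely the \(S\)-free components: one must guarantee that every such component can be spliced onto a path carrying a vertex of \(S\) (equivalently, that the terminal component can be separated without forcing the dominating vertices into every maximum stable set of the rest). I expect this to require a careful case analysis on whether the relevant maximum stable sets of \(D - V(X)\) can be chosen to avoid the dominating set, with the acyclicity of \(SC(D)\) ensuring the absorption process terminates consistently.
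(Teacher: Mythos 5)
Your skeleton matches the paper's: handle the strong case via Theorem~\ref{the:lis-hamiltonian-iff-strong} (Corollary~\ref{cor:lis-strong}), and in the non-strong case work with a terminal strong component \(X\), the domination Lemma~\ref{lem:dominate-one-dominate-all-strong-component}, the universal-vertex reduction, and an additive partition fed into Lemma~\ref{lem:dip-vertex-partition-sum-alpha}. (Your side remark that in-semicomplete digraphs automatically exclude induced anti-directed odd cycles is correct and consistent with the fact that the theorem needs no hypothesis \(D \in \fB\).) But the proof as written has a genuine gap exactly where you flag one: you never establish that the additive split \(\alpha(D) = \alpha(H_1) + \alpha(H_2)\) actually exists, and the ``absorption'' alternative founders on the same difficulty you name (an \(S\)-free component need not be spliceable onto a path through a single dominating vertex, and several such components may compete for the same path). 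Saying ``I expect this to require a careful case analysis'' is not a proof of the one step that carries all the weight.

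The missing idea is short but essential. Let \(Y\) be the set of vertices outside the terminal component \(X\) that send an arc into \(X\); by Lemma~\ref{lem:dominate-one-dominate-all-strong-component} every \(u \in Y\) satisfies \(u \mapsto V(X)\). Fix any \(z \in V(X)\): then \(Y \subseteq N^-(z)\), and since \(D\) is in-semicomplete, \(D[Y]\) is semicomplete, i.e.\ \(Y\) is a \emph{clique} in \(U(D)\). Since \(X\) meets the rest of \(D\) only through \(Y\), either \(Y\) is a clique cut --- and then Lemma~\ref{lem:clique-cut-alpha=alpha1+alpha2} (not a direct separation of \(X\) from \(D - V(X)\)) produces the partition with \(\alpha(D) = \alpha(H_1) + \alpha(H_2)\), sidestepping entirely your worry about dominating vertices being forced into maximum stable sets, because the clique-cut lemma distributes \(Y\) between the two parts for you --- or \(Y\) is not a cut, in which case \(V(D) = Y \cup V(X)\) and every vertex of \(Y\) is universal, so Lemma~\ref{lem:universal-diperfect} and minimality finish. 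Without the observation that \(Y\) is a clique you have no access to Lemma~\ref{lem:clique-cut-alpha=alpha1+alpha2}, and the non-strong case remains open.
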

\begin{proof}
    Towards a contradiction, suppose that the result does not holds and let \(D\) be a counterexample with the smallest number of vertices.
    Note that if \(H\) is an induced subdigraph of~\(D\), then \(H\) is an in-semicomplete digraph.
    Thus by the minimality of \(D\), every proper induced subdigraph of \(D\) satisfies the \(\alpha\)-property.

    If \(D\) is disconnected, then let \(C\) be a component of \(D\).
    Thus \(\alpha(D) = \alpha(C) +\alpha(D - V(C))\), and hence by Lemma~\ref{lem:dip-vertex-partition-sum-alpha}, \(D\) satisfies the \(\alpha\)-property, a contradiction.
    Therefore, we may assume that \(D\) is connected.
    If \(D\) is strong, then the result follows by Corollary~\ref{cor:lis-strong}.
    Thus, suppose that \(D\) is not strong.
    Let \(x\) be a sink in \(SC(D)\) and let \(X\) be the strong component of \(D\) which yields \(x\).
    Let \(Y = \{v \in V(D) \setminus V(X) \colon vu \in A(D) \tand u \in V(X)\}\).
    Since \(D\) is a non-strong connected digraph and \(x\) is a sink of \(SC(D)\), we have \(Y \neq \emptyset\).
    Now we claim that \(D[Y]\) is a semicomplete digraph.
    Let \(u, v \in Y\) and let \(z \in V(X)\).
    By Lemma~\ref{lem:dominate-one-dominate-all-strong-component}, \(u \mapsto z\) and \(v \mapsto z\), and since \(D\) is an in-semicomplete digraph, the vertices \(u\) and \(v\) are adjacent.
    Since \(u, v \in Y\) were chosen in an arbitrary way, we have \(D[Y]\) is a semicomplete digraph.

    Now suppose that \(Y\) is a cut of \(D\).
    Thus, by Lemma~\ref{lem:clique-cut-alpha=alpha1+alpha2}, \(D\) can be partitioned into two proper induced subdigraphs \(X_1\) and \(X_2\) such that \(\alpha(D) = \alpha(X_1) + \alpha(X_2)\), and hence by Lemma~\ref{lem:dip-vertex-partition-sum-alpha}, \(D\) satisfies the \(\alpha\)-property, a contradiction.
    Hence, we may assume that \(Y\) is not a vertex cut,  and thus \(V(D) = Y \cup V(X)\).
    Let \(u\) be a vertex in \(Y\).
    By construction \(uv \in A(D)\) for some vertex \(v \in V(X)\), and hence by Lemma~\ref{lem:dominate-one-dominate-all-strong-component}, \(u \mapsto V(X)\).
    Moreover, since \(D[Y]\) is a semicomplete digraph, we have \(u\) is an universal vertex, and hence by the minimality of \(D\) and by Lemma~\ref{lem:universal-diperfect}, \(D\) satisfies the \(\alpha\)-property, a contradiction.
\end{proof}

The following result verifies Conjecture~\ref{con:bed} for in-semicomplete digraphs.
We omit its proof since it is essentially the same proof presented in Theorem~\ref{the:lis-diperfect}, with the exception that we use Lemmas~\ref{lem:universal-be-diperfect} and~\ref{lem:bed-vertex-partition-sum-alpha} instead of  Lemmas~\ref{lem:universal-diperfect} and~\ref{lem:dip-vertex-partition-sum-alpha}.

\begin{theorem}\label{the:lis-be-diperfect}
	If \(D \in \fD\) is an in-semicomplete digraph, then $D$ is BE-diperfect.
\end{theorem}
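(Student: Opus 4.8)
The plan is to mirror the minimal-counterexample argument of Theorem~\ref{the:lis-diperfect}, replacing the two $\alpha$-property tools by their BE-analogues. So suppose the statement fails and let $D \in \fD$ be an in-semicomplete counterexample with the fewest vertices. Since both the class of in-semicomplete digraphs and the class $\fD$ are closed under taking induced subdigraphs, every proper induced subdigraph of $D$ is again an in-semicomplete digraph in $\fD$, and hence satisfies the BE-property by minimality. It then suffices to derive a contradiction by showing that $D$ itself satisfies the BE-property.

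First I would dispose of the easy reductions exactly as before. If $D$ is disconnected, pick a component $C$; then $\alpha(D) = \alpha(C) + \alpha(D - V(C))$, and Lemma~\ref{lem:bed-vertex-partition-sum-alpha} (the BE-version of the partition lemma) gives the BE-property, a contradiction. So $D$ is connected. If $D$ is strong, then Corollary~\ref{cor:lis-strong} immediately yields the BE-property; note this step is unchanged, since it already produces a Hamilton cycle and hence an $S_{\rm BE}$-path partition for every maximum stable set. Thus the interesting case is that $D$ is connected but not strong.

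For the non-strong case I would follow the structural analysis verbatim: pick a sink $x$ of the (acyclic) strong component digraph $SC(D)$, let $X$ be the strong component collapsing to $x$, and set $Y = \{v \in V(D) \setminus V(X) : vu \in A(D) \text{ for some } u \in V(X)\}$. Connectivity together with $x$ being a sink of $SC(D)$ forces $Y \neq \emptyset$, and Lemma~\ref{lem:dominate-one-dominate-all-strong-component} shows $u \mapsto V(X)$ for each $u \in Y$; combined with in-semicompleteness this makes any two vertices of $Y$ adjacent, so $D[Y]$ is semicomplete (hence a clique in the underlying graph). If $Y$ is a cut of $D$, then Lemma~\ref{lem:clique-cut-alpha=alpha1+alpha2} splits $D$ into two proper induced subdigraphs $X_1$ and $X_2$ with $\alpha(D) = \alpha(X_1) + \alpha(X_2)$, and Lemma~\ref{lem:bed-vertex-partition-sum-alpha} finishes. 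Otherwise $V(D) = Y \cup V(X)$, and any $u \in Y$ is then adjacent to all of $V(X)$ (by the domination above) and to all of $Y \setminus \{u\}$ (since $D[Y]$ is semicomplete), so $u$ is a universal vertex.

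The one genuine point of care --- and the place where this proof differs from the $\alpha$-case --- is the final universal-vertex reduction. I would invoke Lemma~\ref{lem:universal-be-diperfect} rather than Lemma~\ref{lem:universal-diperfect}; the former requires the ambient digraph to lie in $\fD$, a hypothesis that Lemma~\ref{lem:universal-diperfect} does not need (cf.\ Figure~\ref{fig:example-requirement-be}). This is exactly why the theorem is stated only for $D \in \fD$, and the heredity of $\fD$ established at the outset is what makes the hypothesis of Lemma~\ref{lem:universal-be-diperfect} available at this step. With $D - u$ satisfying the BE-property by minimality and $D \in \fD$, Lemma~\ref{lem:universal-be-diperfect} gives that $D$ satisfies the BE-property, the desired contradiction. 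I expect no serious obstacle beyond correctly tracking this $\fD$-membership requirement; everything else transfers unchanged from Theorem~\ref{the:lis-diperfect}.
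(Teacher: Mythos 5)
Your proposal is correct and is exactly the argument the paper intends: the authors omit the proof, stating only that it repeats the proof of Theorem~\ref{the:lis-diperfect} with Lemmas~\ref{lem:universal-be-diperfect} and~\ref{lem:bed-vertex-partition-sum-alpha} substituted for Lemmas~\ref{lem:universal-diperfect} and~\ref{lem:dip-vertex-partition-sum-alpha}. Your additional observation that the hypothesis \(D \in \fD\), being hereditary, is precisely what licenses the use of Lemma~\ref{lem:universal-be-diperfect} at the universal-vertex step is the right point of care and is consistent with the paper.
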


\section{\(k\)-semi-symmetric digraphs}
\label{sec:k-semi-symmetric}

We say that an arc \(uv\) is \emph{lonely} if \(u \mapsto v\).
A digraph \(D\) is \emph{\(k\)-semi-symmetric} if it contains at most \(k\) lonely edges.
In particular, we say that a digraph is \emph{symmetric} if it is \(0\)-semi-symmetric.
Berge~\cite{Berge1982b} showed that symmetric digraphs are \(\alpha\)-diperfect, and hence Conjecture~\ref{con:diperfect} holds for them.
In this section, we use the same idea as those used by Berge to show that \(2\)-semi-symmetric digraphs and a special case of \(3\)-semi-symmetric digraph are BE-diperfect, which confirm Conjectures~\ref{con:diperfect} and~\ref{con:bed} for these classes of digraphs.

We start by describing  Berge's proof for symmetric digraphs.
Let \(D\) be a symmetric digraph.
To show that \(D\) is \(\alpha\)-diperfect, it is sufficient to show that \(D\) satisfies the \(\alpha\)-property.
Let \(S\) be a maximum stable set of \(D\) and let \(D'\) be the digraph obtained from \(D\) by removing every arc entering \(S\).
Note that, since \(D\) is symmetric, \(\alpha(D') = \alpha(D)\) and that every vertex in \(S\) is a source.
Now let \(\sP\) be a path partition of \(D'\) with the smallest size.
Since every vertex \(x \in S\) is a source, we have \(x\) is the initial vertex of some path in \(\sP\), and hence \(|S| \leq |\sP| = \pi(D')\).
By Theorem~\ref{the:gallai-milgram}, we have \(\pi(D') \leq \alpha(D') = |S|\).
Thus \(\sP\) is an \(S\)-path partition of \(D'\) and, consequently, of \(D\).
Since~\(S\) was chosen arbitrarily, we have \(D\) satisfies the \(\alpha\)-property.
Indeed,~\(\sP\) is an \SBE-path partition of \(D\), so Berge's proof shows that symmetric digraphs are BE-diperfect.
We use this idea to state the following two auxiliary results which generalize the aforementioned idea.

\begin{lemma}\label{lem:same-direction-lonely-edges}
	Let $D$ be a digraph  and let \(S\) be a maximum stable set of \(D\).
	If all the lonely edges of \(D\) that have a vertex in \(S\) are leaving \(S\), then there exists an \(S\)-path partition of \(D\) such that very path in \(\sP\) starts in \(S\).
	Analogously, if all the lonely edges of \(D\) that have a vertex in \(S\) are entering \(S\), then there exists an \(S\)-path partition \(\sP\) of \(D\) such that every path in \(\sP\) ends in \(S\).
\end{lemma}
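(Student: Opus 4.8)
The plan is to generalize the argument for symmetric digraphs given just above (Berge's proof) to this setting, and to deduce the second statement from the first by directional duality. Applying the first statement to the inverse digraph of $D$ turns lonely edges entering $S$ into lonely edges leaving $S$, and turns an $S$-path partition whose paths start in $S$ into one whose paths end in $S$ (reversing each path); since the inverse digraph has the same underlying graph, $S$ is still a maximum stable set there. So I focus on proving the first statement.

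First I would form the digraph $D'$ obtained from $D$ by deleting every arc entering $S$, that is, every arc $yx$ with $x \in S$ (note $y \notin S$, since $S$ is stable). The crucial point is that the hypothesis forces $U(D') = U(D)$. Indeed, the hypothesis says that every lonely edge having a vertex in $S$ leaves $S$; as $S$ is stable, this means there is no lonely edge entering $S$. Hence every deleted arc $yx$ is non-lonely, so its reverse $xy$ belongs to $A(D)$; since $xy$ leaves $S$ it is not deleted, and $x$ and $y$ remain adjacent in $D'$. As $D'$ is a subdigraph of $D$, no adjacency is created either, so the underlying graph is unchanged and $\alpha(D') = \alpha(D) = |S|$.

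Next I would observe that every vertex of $S$ is a source in $D'$: all arcs into a vertex of $S$ were deleted, and a vertex of $S$ has no neighbor in $S$. Applying the Gallai--Milgram theorem (Theorem~\ref{the:gallai-milgram}) to $D'$ yields a path partition $\sP$ with $|\sP| = \pi(D') \leq \alpha(D') = |S|$. Since each $x \in S$ is a source, in the path of $\sP$ containing it the vertex $x$ can only be the initial vertex (every non-initial vertex of a path has an in-arc within the path); in particular no path contains two vertices of $S$, so the $|S|$ vertices of $S$ lie in $|S|$ distinct paths, giving $|\sP| \geq |S|$. Combining the two inequalities gives $|\sP| = |S|$, whence each path contains exactly one vertex of $S$ and begins at it. Finally, because $A(D') \subseteq A(D)$, the family $\sP$ is also an $S$-path partition of $D$ all of whose paths start in $S$, as required.

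The main obstacle---indeed the only nontrivial step---is establishing $\alpha(D') = \alpha(D)$; everything else is a routine application of Gallai--Milgram together with the source observation. This identity is exactly where the lonely-edge hypothesis is used: it guarantees that deleting the in-arcs of $S$ preserves all adjacencies, since each deleted arc is backed by a surviving reverse arc, so no new stable set can appear. Once this is in hand, the remainder of the argument mirrors Berge's proof for symmetric digraphs verbatim.
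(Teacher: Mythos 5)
Your proposal is correct and follows exactly the route the paper intends: the paper's proof of this lemma is literally ``Analogous to Berge's proof for symmetric digraphs,'' and you have carried out that argument, correctly identifying that the lonely-edge hypothesis is what guarantees $U(D')=U(D)$ and hence $\alpha(D')=\alpha(D)$ after deleting the arcs entering $S$. The deduction of the second statement via the inverse digraph is also the standard directional duality the paper uses elsewhere.
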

\begin{proof}
    Analogous to Berge's proof for symmetric digraphs.
\end{proof}

\begin{lemma}\label{lem:lonely-edge-only-one-different}
	Let \(D\) be a digraph, \(S\) be a maximum stable set of \(D\), \(X_1\) be the set of lonely edges entering \(S\), and \(X_2\) be the set of lonely edges leaving \(S\).
	If all lonely edges are in \(X_1 \cup X_2\), then there exists an \SBE-path partition of \(D\).
\end{lemma}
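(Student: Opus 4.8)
The plan is to use the hypothesis to reduce the statement to the symmetric case, which was already settled in this section, and then transfer a solution back. First I would set \(T = V(D) \setminus S\) and observe that the hypothesis forces \(D[T]\) to be symmetric: any arc with both endpoints in \(T\) meets no vertex of \(S\), so it cannot be lonely (every lonely arc lies in \(X_1 \cup X_2\) and hence meets \(S\)), and therefore its reverse arc is present as well. Consequently the only non-symmetric arcs of \(D\) are the lonely arcs of \(X_1 \cup X_2\), each of which joins \(S\) to \(T\).

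Next I would symmetrize. Let \(D^*\) be obtained from \(D\) by adding, for every lonely arc \(xy\), its reverse \(yx\). Then \(D^*\) is symmetric and \(U(D^*) = U(D)\), so \(\alpha(D^*) = \alpha(D) = |S|\) and \(S\) remains a maximum stable set of \(D^*\). Applying Lemma~\ref{lem:same-direction-lonely-edges} to \(D^*\) (which has no lonely edges at all, so its hypothesis holds vacuously) yields an \(S\)-path partition \(\sP^*\) of \(D^*\) in which every path begins at its vertex of \(S\); in particular \(\sP^*\) is an \SBE-path partition of \(D^*\). Equivalently, one may simply invoke the fact, recalled earlier in this section, that symmetric digraphs satisfy the BE-property.

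Finally I would transfer \(\sP^*\) to \(D\). Fix a path \(P \in \sP^*\) and let \(s\) be its unique vertex of \(S\), which is the first vertex of \(P\); all remaining vertices of \(P\) lie in \(T\) and form a contiguous subpath whose internal arcs are all symmetric in \(D\), and hence are traversable in either direction. Let \(t \in T\) be the neighbour of \(s\) on \(P\). Since \(s\) and \(t\) are adjacent in \(U(D)\), at least one of \(st, ts\) belongs to \(A(D)\): if \(st \in A(D)\), keep \(P\) as is, so it starts at \(s\); if only \(ts \in A(D)\), reverse the whole path so that it ends at \(s\). In either case we obtain a directed path of \(D\) on \(V(P)\) with \(s\) at an end, and carrying this out for every path produces an \SBE-path partition of \(D\).

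The hard part is the transfer step, and this is exactly where the hypothesis is indispensable. A path of \(D^*\) need not be a path of \(D\), since the single arc joining its endpoint \(s\) to the rest of \(P\) may be oriented the wrong way in \(D\). What rescues the argument is that, because every lonely arc meets \(S\), the entire \(T\)-portion of each path is symmetric and can therefore be reversed freely to accommodate whichever orientation the (possibly lonely) arc at \(s\) dictates; and since \(s\) is an endpoint, that arc is its only incident arc inside \(P\), so nothing else can obstruct the reorientation. I expect no further difficulties beyond verifying these two points carefully.
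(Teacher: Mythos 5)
Your proof is correct and takes essentially the same route as the paper's: both modify \(D\) so that Lemma~\ref{lem:same-direction-lonely-edges} becomes applicable (the paper reverses only the arcs of \(X_1\) and deletes the arcs entering \(S\), whereas you fully symmetrize all lonely arcs), and both transfer the resulting partition back to \(D\) by reversing any path whose initial arc is mis-oriented, which is legitimate precisely because every arc not meeting \(S\) is symmetric. The only cosmetic difference is that your application of the lemma is vacuous, rather than to a digraph whose lonely arcs incident to \(S\) all leave \(S\).
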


\begin{proof}
	Let \(X'_1 = \{yx \: xy \in X_1\}\), and let \(D'\) be the digraph obtained from \(D\) by adding the edges from \(X'_1\) and by removing all the edges entering \(S\).
	Note that \(\alpha(D') = \alpha(D)\) and that all the lonely edges of \(D'\) that have an endvertex in \(S\) are leaving \(S\).
    By Lemma~\ref{lem:same-direction-lonely-edges}, there exists a path partition \(\sP'\) of \(D'\) such that every path in \(\sP'\) starts in \(S\).
	Now we show how to construct an \SBE-path partition \(\sP\) of \(D\) from the path partition \(\sP'\).
	Let \(P' = u_1u_2 \cdots u_\ell \in \sP'\).
	If \(P'\) contains no arc from \(X'_1\), then \(P'\) is a path of \(D\), and hence add \(P'\) to \(\sP\).
	Thus, suppose that \(P'\) contains an arc \(yx \in X'_1\).
    Since every path in \(\sP'\) starts in \(S\), we have \(u_1\) is the only vertex from \(P'\) in \(S\), and hence \(u_1 = y\), \(u_2 = x\), \(u_i \leftrightarrow_{D} u_{i + 1}\) for every \(i \geq 2\).
	Thus \(u_\ell u_{\ell - 1} \cdots u_1\) is a path in \(D\) and we add it to \(\sP\).
	Therefore, \(\sP\) is an \SBE-path partition of \(D\), and the result follows.
\end{proof}

Note that every \(2\)-semi-symmetric digraph belongs to \(\fD \subset \fB\).
The next result verifies Conjecture~\ref{con:diperfect} for \(2\)-semi-symmetric digraphs, and consequently for symmetric digraphs.

\begin{theorem}\label{lem:2-semi-symmetric-be-diperfect}
	If \(D\) is a \(2\)-semi-symmetric digraph, then \(D\) is BE-diperfect.
\end{theorem}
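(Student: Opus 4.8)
The plan is to fix an arbitrary maximum stable set \(S\) of \(D\) and produce an \SBE-path partition, after first observing that being \(2\)-semi-symmetric is inherited by induced subdigraphs (a lonely edge of an induced subdigraph is already lonely in \(D\)), so that establishing the BE-property for \(D\) itself suffices. The whole argument hinges on the two auxiliary results already proved: Lemma~\ref{lem:same-direction-lonely-edges}, which yields an \(S\)-path partition whose paths all start in \(S\) (or all end in \(S\)) — hence an \SBE-path partition — provided every lonely edge meeting \(S\) points the same way; and Lemma~\ref{lem:lonely-edge-only-one-different}, which yields an \SBE-path partition as soon as every lonely edge meets \(S\). The point I would stress is that the hypothesis of Lemma~\ref{lem:same-direction-lonely-edges} constrains only those lonely edges that actually have an endpoint in \(S\); lonely edges disjoint from \(S\) are harmless and may be ignored when applying it.

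With this in mind, I would split into cases according to the lonely edges incident to \(S\). Let \(L_S\) be the set of lonely edges having a vertex in \(S\); since \(S\) is stable, each such edge either leaves \(S\) or enters \(S\). If every edge of \(L_S\) leaves \(S\) (in particular if \(L_S=\emptyset\)), then Lemma~\ref{lem:same-direction-lonely-edges} gives an \SBE-path partition in which every path starts in \(S\); if every edge of \(L_S\) enters \(S\), the symmetric statement of the same lemma gives one in which every path ends in \(S\). In both subcases any lonely edge avoiding \(S\) plays no role whatsoever.

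The only remaining possibility is that \(L_S\) contains both an edge leaving \(S\) and an edge entering \(S\), so that \(|L_S|\ge 2\). This is exactly where \(2\)-semi-symmetry does its work: since \(D\) has at most two lonely edges in total, we must have \(|L_S|=2\), and these two edges are \emph{all} of the lonely edges of \(D\). Hence every lonely edge meets \(S\), i.e.\ every lonely edge lies in \(X_1\cup X_2\) in the notation of Lemma~\ref{lem:lonely-edge-only-one-different}, and that lemma produces the desired \SBE-path partition. As \(S\) was arbitrary, \(D\) satisfies the BE-property, and by heredity \(D\) is BE-diperfect.

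I would close by pinpointing where the real difficulty lies, since the argument is almost entirely bookkeeping once the right cases are identified. The genuinely awkward configuration — a lonely edge entering \(S\), a lonely edge leaving \(S\), \emph{and} a further lonely edge disjoint from \(S\) — escapes both lemmas, because the first two force us to use Lemma~\ref{lem:lonely-edge-only-one-different} while the third violates its hypothesis. The crux is simply that such a configuration requires three lonely edges and therefore cannot arise here; this is precisely the obstruction that resurfaces for \(3\)-semi-symmetric digraphs and explains why only a special case can be handled in that larger setting.
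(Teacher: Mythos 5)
Your proof is correct and follows essentially the same route as the paper's: reduce to the BE-property by heredity, then split on the orientations of the lonely edges meeting \(S\), invoking Lemma~\ref{lem:same-direction-lonely-edges} when they all agree and Lemma~\ref{lem:lonely-edge-only-one-different} when one enters and one leaves (in which case \(2\)-semi-symmetry forces these to be all the lonely edges). Your write-up is somewhat more explicit than the paper's about why the case analysis is exhaustive, but the argument is the same.
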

\begin{proof}
	Since every induced subdigraph of \(D\) is also \(2\)-semi-symmetric, it suffices to show that \(D\) satisfies the BE-property.
	Let \(S\) be a maximum stable set in \(D\).
	If \(D\) has precisely two lonely edges \(uv\) and \(xy\) such that \(u, y \in S\), then the result follows by Lemma~\ref{lem:lonely-edge-only-one-different}.
	Now, we can suppose that all lonely edges of \(D\) that have a vertex in \(S\) are either entering or leaving \(S\), and hence the result follows by Lemma~\ref{lem:same-direction-lonely-edges}.
\end{proof}

\begin{corollary}\label{cor:uni-leq-2-be-diperfect}
	If \(D\) is a \(2\)-semi-symmetric digraph, then \(D\) is \(\alpha\)-diperfect.
\end{corollary}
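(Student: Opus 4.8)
The plan is to deduce this corollary directly from Theorem~\ref{lem:2-semi-symmetric-be-diperfect} via the general implication that every BE-diperfect digraph is \(\alpha\)-diperfect. First I would recall, as already noted in the introduction, that an \(S_{\rm BE}\)-path partition is in particular an \(S\)-path partition: the defining condition \(V(P) \cap S = \{x\}\) with \(x \in \{x_1, x_k\}\) forces \(|V(P) \cap S| = 1\) for every \(P \in \sP\), which is exactly the requirement for an \(S\)-path partition. Hence, whenever a digraph satisfies the BE-property with respect to a maximum stable set \(S\), the very same partition witnesses the \(\alpha\)-property for \(S\); in particular, satisfying the BE-property implies satisfying the \(\alpha\)-property.

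Applying this observation to induced subdigraphs, I would argue as follows: if \(D\) is BE-diperfect, then every induced subdigraph of \(D\) satisfies the BE-property, and therefore, by the previous paragraph, every induced subdigraph of \(D\) satisfies the \(\alpha\)-property; that is, \(D\) is \(\alpha\)-diperfect. Since Theorem~\ref{lem:2-semi-symmetric-be-diperfect} establishes that every \(2\)-semi-symmetric digraph is BE-diperfect, the corollary follows at once.

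There is essentially no obstacle here: all the substantive work is carried by Theorem~\ref{lem:2-semi-symmetric-be-diperfect}, and the corollary is a one-line consequence of the proper containment of the class of BE-diperfect digraphs inside the class of \(\alpha\)-diperfect digraphs, which was remarked upon when the BE-property was introduced. The only point requiring care is the bookkeeping of the definitions, namely verifying that the additional endpoint constraint imposed by the BE-property merely strengthens the \(\alpha\)-property and does not alter the maximum stable set or the underlying partition being produced.
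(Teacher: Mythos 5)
Your argument is correct and is exactly the paper's (implicit) justification: the corollary is stated as an immediate consequence of Theorem~\ref{lem:2-semi-symmetric-be-diperfect} together with the observation, already made in the introduction, that every \(S_{\rm BE}\)-path partition is an \(S\)-path partition and hence every BE-diperfect digraph is \(\alpha\)-diperfect. Nothing further is needed.
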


The proof of Theorem~\ref{lem:2-semi-symmetric-be-diperfect} is essentially Berge's proof for symmetric digraphs.
Our aim in proposing the class of \(k\)-semi-symmetric digraphs is extend Berge's result for \(k \geq 3\).
Next theorem is a first step in this direction.

\begin{theorem}\label{the:bed-3-semi-symmetric}
	Let \(D\) be a \(3\)-semi-symmetric digraph.
	If no pair of lonely edges has a common endvertex, then \(D\) is BE-diperfect.
\end{theorem}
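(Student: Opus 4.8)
The plan is to first reduce to showing that $D$ itself satisfies the BE-property: every induced subdigraph of a $3$-semi-symmetric digraph is again $3$-semi-symmetric, and deleting vertices cannot create a shared endvertex among the surviving lonely edges, so the entire hypothesis is hereditary. Fix a maximum stable set $S$ of $D$. Each lonely edge $uv$ (with $u \mapsto v$) falls into exactly one of three types relative to $S$: \emph{leaving} ($u \in S$), \emph{entering} ($v \in S$), or \emph{outside} ($u, v \notin S$); the case $u, v \in S$ is impossible since $S$ is stable. If there is no outside lonely edge, then every lonely edge is entering or leaving $S$, and Lemma~\ref{lem:lonely-edge-only-one-different} directly produces the desired \SBE-path partition. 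So from now on I assume at least one outside lonely edge exists.

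The key device is to \emph{symmetrize} the outside lonely edges. For each outside lonely edge $u \mapsto v$ I add its reverse arc $vu$, obtaining a digraph $D^+$ with $U(D^+) = U(D)$, hence $\alpha(D^+) = \alpha(D)$ and $S$ still a maximum stable set of $D^+$. By construction every lonely edge of $D^+$ touches $S$, so $D^+$ satisfies the hypothesis of Lemma~\ref{lem:lonely-edge-only-one-different} and admits an \SBE-path partition $\sP^+$. Moreover, tracing the construction inside that lemma (reverse the entering edges, delete the arcs entering $S$, invoke Lemma~\ref{lem:same-direction-lonely-edges}), each path of $\sP^+$, oriented so that its unique vertex of $S$ is an endpoint, uses only symmetric arcs of $D^+$ except possibly for a single lonely arc incident to that $S$-endpoint. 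In particular, all arcs of $\sP^+$ that come from outside lonely edges of $D$ lie in the symmetric part of $D^+$, that is, strictly away from the $S$-endpoint of their path.

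It remains to realize $\sP^+$ as an \SBE-path partition of $D$. The only arcs of $\sP^+$ that may fail to exist in $D$ are the added reverse arcs; each lies on a path $P^+$ all of whose edges are symmetric in $D^+$ apart from at most one lonely arc at its $S$-end. When this $S$-end arc is symmetric, $P^+$ may be freely reversed while keeping its $S$-vertex as an endpoint, so if $P^+$ carries a single outside edge then one of the two orientations traverses it as $u \mapsto v$ and yields a legal path of $D$. The difficult configurations — and where the hypotheses are essential — are those in which the traversal of $P^+$ is forced (either by a pinned lonely arc at its $S$-end, or by a second outside edge pulling in the opposite direction) to traverse some outside edge $u \mapsto v$ backwards. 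Here I would exploit that the lonely edges form a matching of size at most three: any two conflicting outside arcs are vertex-disjoint and the portion of $P^+$ lying between them is entirely symmetric, so $P^+$ can be cut along that symmetric portion and its pieces re-routed and recombined — using the symmetric arcs together with the at most one remaining lonely edge — into legal paths of $D$ without changing either the number of paths or the set of $S$-endpoints. The main obstacle is precisely this re-routing: one must check that it succeeds for every configuration of at most three matching lonely edges. The no-common-endvertex hypothesis is exactly what makes the cut segment symmetric and lets the pieces reattach, while the bound of three keeps the number of configurations finite, so the verification is a short but genuinely case-based argument.
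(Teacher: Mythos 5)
Your reduction to the BE-property, your trichotomy of lonely edges relative to $S$, and your dispatch of the case with no ``outside'' lonely edge via Lemma~\ref{lem:lonely-edge-only-one-different} are all fine, and the observation that every non-initial arc of a path produced by that lemma is symmetric in the symmetrized digraph $D^+$ is correct. But the proof has a genuine gap at exactly the point you flag: the conversion of the \SBE-path partition $\sP^+$ of $D^+$ back into one of $D$ is not a routine finite check, and the local re-routing you describe fails in a configuration that does occur. Concretely, in the essential case there is a lonely arc $x_1x_2$ leaving $S$, a lonely arc $y_1y_2$ entering $S$, and one outside lonely arc $w_1w_2$. Nothing prevents the path of $\sP^+$ that begins with the pinned arc $x_1x_2$ from also traversing the added reverse arc $w_2w_1$ in its interior, say $P^+ = x_1x_2\cdots u_{i-1}w_2w_1u_{i+2}\cdots u_\ell$. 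Reversing the whole path is blocked by $x_1\mapsto x_2$; traversing it forwards is blocked at $w_2$; and cutting it at the $\{w_1,w_2\}$ edge produces a second fragment containing no vertex of $S$, which cannot be reattached \emph{within this path} and must be absorbed by some other path of the partition. That is a global rearrangement, and nothing in your sketch (nor in the matching structure of the three lonely edges) guarantees the needed adjacency between the stranded fragment and another path. So the ``short case-based verification'' is precisely the theorem, not an afterthought.

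The paper's proof takes a different and genuinely global route that you would need to replicate in some form. It works with a minimum counterexample (minimizing $|V(D)|+|A(D)|$), and rather than symmetrizing the outside arc it deletes a \emph{digon} $\{y_2z,zy_2\}$ at the endpoint $y_2\in S$ of the entering lonely arc. If $\alpha$ is unchanged, minimality finishes; if $\alpha$ increases, one extracts a second maximum stable set $S''$ of $D$ with $z\in S''$ and $y_1\notin S''$, uses Hall's theorem to build a perfect matching between $R=S\setminus S''$ and $Z=S''\setminus S$, takes an $S''_{\text{BE}}$-path partition of $D-R$ (after possibly deleting the arcs into $x_2$ so that $x_2$ becomes a source), and prepends or appends each vertex of $R$ to the path ending at its matched partner in $Z$. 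The no-common-endvertex hypothesis enters to guarantee that all but one of these matched pairs are joined by digons. If you want to salvage your symmetrization idea, you would need an argument of comparable global strength for redistributing the stranded, $S$-free fragment among the other paths; as written, that step is missing.
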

\begin{proof}
	Towards a contradiction, suppose the opposite, and let \(D\) be a counterexample that minimizes \(|V(D)| + |A(D)|\).
	By the minimality of \(D\), every proper induced subdigraph of \(D\) satisfies the BE-property.
	Thus since \(D\) is a counterexample, we have \(D\) does not satisfy the BE-property.
	Let \(S\) be a maximum stable set of \(D\) for which there is no \SBE-path partition of \(D\).
	By Lemma~\ref{lem:same-direction-lonely-edges}, we may assume that there is a lonely arc \(x_1x_2\) leaving \(S\) and a lonely arc \(y_1y_2\) entering \(S\), and by Lemma~\ref{lem:lonely-edge-only-one-different}, we may assume that there is a lonely arc \(w_1w_2\) such that \(w_1, w_2 \notin S\).

	First, suppose that \(y_1\) is the only neighbor of \(y_2\).
	Let \(D' = D - \{y_1, y_2\}\) and \(S' = S \setminus \{y_2\}\).
	Note that \(\alpha(D') = \alpha(D) - 1\) and  \(S'\) is a maximum stable set of \(D'\).
	By the minimality of \(D\), there exists an \(S'_\text{BE}\)-path partition \(\sP'\) of \(D'\), and hence \(\sP' \cup \{y_1y_2\}\) is an \SBE-path partition of \(D\), a contradiction.
	Therefore, we may assume that \(y_2\) has a neighbor \(z\) different from \(y_1\).
	Since no pair of lonely edges shares a vertex, we have \(z \leftrightarrow y_2\). 
	Let \(D' = D - \{y_2z, zy_2\}\), and note that \(D'\) satisfies all the lemma's conditions.
	If \(\alpha(D') = \alpha(D)\), then \(S\) is a maximum stable set of \(D'\), and hence by the minimality of \(D\), there exists an \SBE-path partition \(\sP'\) of \(D'\).
	By the construction of \(D'\), we have \(P'\) is also an \SBE-path partition of \(D\), a contradiction.
	Therefore, we may assume that \(\alpha(D') = \alpha(D) + 1\), and let \(S'\) be a maximum stable set of \(D'\).
	Note that \(y_2, z \in S'\) and \(y_1 \notin S'\).
	Let \(S'' = S' \setminus \{y_2\}\), and note that \(S''\) is a maximum stable set of \(D\) such that \(z \in S''\) and \(y_1 \notin S''\).

	Let \(R = S \setminus S''\) and let \(Z = S'' \setminus  S\).
	Note that \(y_2 \in R\) and that \(|R| = |Z|\).
	Now we claim that there exists a perfect matching between the vertices of \(R\) and \(Z\).
	Towards a contradiction, suppose that such matching does not exist.
	By Hall's Theorem, there exists a set \(R' \subseteq R\) such that
	the neighborhood \(Z'\) of \(R'\) in \(Z\) (i.e., \(N(R') \cap Z\)) is smaller than \(|R'|\).
	Note that every vertex in \(R'\) is non-adjacent to every vertex in \(Z \setminus Z'\).
	Since \(R' \subseteq S\) is a stable set, we have \(R' \cup (S'' \setminus Z')\) is a stable set of \(D\) with size greater than \(S\), a contradiction.
	Therefore, let \(M\) be a perfect matching between the vertices of \(R\) and \(Z\).
	Given a vertex \(x \in R\), we denote by \(M(x)\) the vertex of \(Z\) matched to \(x\) by \(M\).
	
	Now suppose that for every vertex \(x \in R\), we have \(x \leftrightarrow M(x)\).
	Let \(D'' = D - R\) and note that \(S''\) is a maximum stable set of \(D''\).
	By the minimality of \(D\), we have \(D''\) satisfies the BE-property, and hence let \(\sP''\) be an \(S''_\text{BE}\)-path partition of \(D''\).
	Let \(\sP''_Z \subseteq \sP''\) be the set of paths starting or ending at \(Z\), and let \(\sP''_{\bar{Z}} = \sP'' \setminus \sP''_Z\).
	Note that \(\sP''_{\bar{Z}}\) is the set of paths starting or ending at \(S'' \setminus Z = S \setminus R\).
	For every \(x \in R\), let \(P''_x\) be the path in \(\sP''_Z\) starting or ending in \(M(x)\).
	Since \(x \leftrightarrow M(x)\), either \(xP''_x\) or \(P''_x x\) is a path of \(D\), so name  such path  \(P_x\).
	Thus, \(\{P_x \: x \in R\} \cup \sP''_{\bar{Z}}\) is an \SBE-path partition of \(D\), a contradiction.

	Thus we may assume that there exists a vertex \(x \in R\) such that \(x\) and \(M(x)\) are joined by a lonely arc.
	Since \(y_1 \notin S''\), and \(x_1x_2, y_1y_2\) are the only lonely edges with an endvertex in \(S\), we have \(x = x_1\) and \(M(x) = x_2\).
	Moreover, for each vertex \(y \in R \setminus \{x_1\} \), we have \(y \leftrightarrow M(y)\).
	Let \(D'' = D - R - \{yx_2 \: yx_2 \in A(D)\}\).
	Note that we only remove non-lonely edges, since the arc \(x_1x_2\) is a lonely arc  incident to \(x_2\) and no pair of lonely edges share a vertex. 
	Thus \(x_2\) is a source in \(D''\) and \(S''\) is a maximum stable set of \(D''\).
	Since \(D - R\) has only one lonely arc, either all the lonely edges of \(D''\) have an endvertex in \(S''\) or every lonely arc that has an endvertex in \(S''\) are leaving  \(S''\).
	Hence by Lemma~\ref{lem:same-direction-lonely-edges} or~\ref{lem:lonely-edge-only-one-different}, there exists an \(S''_\text{BE}\)-path partition \(\sP''\) of \(D''\).
	Let \(\sP''_Z \subseteq \sP''\) be the set of paths starting or ending at \(Z\), and let \(\sP''_{\bar{Z}} = \sP'' \setminus \sP''_Z\).
	Note that \(\sP''_{\bar{Z}}\) is the set of paths starting or ending at \(S'' \setminus Z = S \setminus R\).
	For every \(y \in R\), let \(P''_y\) be the path in \(\sP''_Z\) starting or ending at \(M(y)\).
	Since \(M(x) = x_2\) is a source in \(D''\), \(P''_x\) is a path starting at \(M(x)\), and hence \(P_{x} = xP''_x\) is a path of \(D\).
	Since \(y \leftrightarrow M(y)\), for every \(y \in R \setminus \{x\}\), we have either \(yP''_y\) or \(P''_y y\) is a path of \(D\), so name  such path \(P_y\).
	Thus, \(\{P_y \: y \in R\} \cup \sP''_{\bar{Z}}\) is an \SBE-path partition of \(D\), a contradiction.
\end{proof}

\begin{corollary}
	Let \(D\) be a \(3\)-semi-symmetric digraph.
	If no pair of lonely edges has a common endvertex, then \(D\) is \(\alpha\)-diperfect.
\end{corollary}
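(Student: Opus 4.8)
The plan is to derive this corollary immediately from Theorem~\ref{the:bed-3-semi-symmetric}. The hypotheses of the corollary --- that \(D\) is \(3\)-semi-symmetric and that no pair of lonely edges has a common endvertex --- are exactly the hypotheses of Theorem~\ref{the:bed-3-semi-symmetric}. Hence that theorem applies verbatim and yields that \(D\) is BE-diperfect, and so all the substantive work has already been done.

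It then remains only to invoke the general implication, recorded right after the definition of BE-diperfect digraphs, that every BE-diperfect digraph is \(\alpha\)-diperfect. Concretely, if \(D\) is BE-diperfect then every induced subdigraph \(H\) of \(D\) satisfies the BE-property; that is, for each maximum stable set \(S\) of \(H\) there is an \(S_{\rm BE}\)-path partition of \(H\). But any \(S_{\rm BE}\)-path partition is in particular an \(S\)-path partition, since each of its paths has exactly one vertex of \(S\) (namely at one of its two ends). Thus \(H\) satisfies the \(\alpha\)-property, and as this holds for every induced subdigraph of \(D\), the digraph \(D\) is \(\alpha\)-diperfect.

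There is no genuine obstacle at this stage: the only additional ingredient beyond Theorem~\ref{the:bed-3-semi-symmetric} is the trivial containment of the class of BE-diperfect digraphs in the class of \(\alpha\)-diperfect digraphs. I would therefore present the argument in essentially a single sentence, mirroring exactly how the paper handles the analogous Corollary~\ref{cor:uni-leq-2-be-diperfect} that follows Theorem~\ref{lem:2-semi-symmetric-be-diperfect}.
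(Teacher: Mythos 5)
Your proposal is correct and matches the paper exactly: the corollary is stated there without proof, as an immediate consequence of Theorem~\ref{the:bed-3-semi-symmetric} combined with the observation from the introduction that every BE-diperfect digraph is $\alpha$-diperfect (since an $S_{\rm BE}$-path partition is in particular an $S$-path partition). Nothing further is needed.
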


\section{Concluding Remarks}
\label{sec:concluding-remarks}

Berge's Conjecture for \(\alpha\)-diperfect digraphs has proved through the years to be a hard to solve problem.
The only previous known results for this conjecture were those presented by Berge in his seminal paper from 1982.
Aiming to understand this difficulty, we proposed the study of a new class of digraphs, that we called BE-diperfect, and presented Conjecture~\ref{con:bed} which would characterize such class. 
We refer to this conjecture as \emph{Begin-End conjecture}.
We believe that the study of BE-diperfect digraphs can lead to further results for Berge's Conjecture.
Indeed, our results for Berge's Conjecture are a development of previous results we obtained for the Begin-End conjecture.

Our results for digraphs whose underlying graph are series-parallel and for in-semicomplete digraphs rely on the structure of the underlying graph.
With the help of lemmas from Section~\ref{sec:partition-lemmas}, we were able to split a digraph \(D\) in any of those families into two proper induced subdigraphs \(D_1\) and \(D_2\) with \(\alpha(D) = \alpha(D_1) + \alpha(D_2)\), and apply induction. 
We believe that this approach, with the development of new partition lemmas, could yield new results for others classes of digraphs.
On the other hand, our result for \(3\)-semi-symmetric digraphs rely more on the orientation of the arcs.
The study of these conjectures for \(k\)-semi-symmetric digraphs with \(k \geq 3\) seems an inviting direction of research since it allows an incremental approach for dealing with the lonely arcs and the forbidden subdigraphs.

\bibliographystyle{amsplain}
\begin{bibdiv}
\begin{biblist}

\bib{Bang-Jensen1990}{article}{
      author={Bang-Jensen, J{\o}rgen},
       title={Locally semicomplete digraphs: a generalization of tournaments},
        date={1990},
        ISSN={0364-9024},
     journal={J. Graph Theory},
      volume={14},
      number={3},
       pages={371\ndash 390},
         url={https://doi.org/10.1002/jgt.3190140310},
      review={\MR{1060865}},
}

\bib{Bang-Jensen1995}{article}{
      author={Bang-Jensen, J{\o}rgen},
       title={Digraphs with the path-merging property},
        date={1995},
        ISSN={0364-9024},
     journal={J. Graph Theory},
      volume={20},
      number={2},
       pages={255\ndash 265},
         url={https://doi.org/10.1002/jgt.3190200214},
      review={\MR{1348567}},
}

\bib{Bang-JensenEtAl1997}{article}{
      author={Bang-Jensen, J{\o}rgen},
      author={Guo, Yubao},
      author={Gutin, Gregory},
      author={Volkmann, Lutz},
       title={A classification of locally semicomplete digraphs},
        date={1997},
        ISSN={0012-365X},
     journal={Discrete Math.},
      volume={167/168},
       pages={101\ndash 114},
         url={https://doi.org/10.1016/S0012-365X(96)00219-1},
        note={15th British Combinatorial Conference (Stirling, 1995)},
      review={\MR{1446736}},
}

\bib{BangJensenGutin1998}{article}{
      author={Bang-Jensen, J{\o}rgen},
      author={Gutin, Gregory},
       title={Generalizations of tournaments: a survey},
        date={1998},
        ISSN={0364-9024},
     journal={J. Graph Theory},
      volume={28},
      number={4},
       pages={171\ndash 202},
  url={https://doi.org/10.1002/(SICI)1097-0118(199808)28:4<171::AID-JGT1>3.0.CO;2-G},
      review={\MR{1636384}},
}

\bib{Bang-JensenGutin2008}{book}{
      author={Bang-Jensen, J{\o}rgen},
      author={Gutin, Gregory},
       title={Digraphs: Theory, algorithms and applications},
     edition={Second},
      series={Springer Monographs in Mathematics},
   publisher={Springer-Verlag London, Ltd., London},
        date={2009},
}

\bib{BaHuPri1993}{article}{
      author={Bang-Jensen, J{\o}rgen},
      author={Huang, Jing},
      author={Prisner, Erich},
       title={In-tournament digraphs},
        date={1993},
        ISSN={0095-8956},
     journal={J. Combin. Theory Ser. B},
      volume={59},
      number={2},
       pages={267\ndash 287},
         url={https://doi.org/10.1006/jctb.1993.1069},
      review={\MR{1244934}},
}

\bib{Bang-JensenEtAl2006}{article}{
      author={Bang-Jensen, J{\o}rgen},
      author={Nielsen, Morten~Hegner},
      author={Yeo, Anders},
       title={Longest path partitions in generalizations of tournaments},
        date={2006},
        ISSN={0012-365X},
     journal={Discrete Math.},
      volume={306},
      number={16},
       pages={1830\ndash 1839},
         url={https://doi.org/10.1016/j.disc.2006.03.063},
      review={\MR{2251564}},
}

\bib{Berge1982b}{article}{
      author={Berge, Claude},
       title={Diperfect graphs},
        date={1982},
        ISSN={0209-9683},
     journal={Combinatorica},
      volume={2},
      number={3},
       pages={213\ndash 222},
         url={http://dx.doi.org/10.1007/BF02579229},
      review={\MR{698648}},
}

\bib{BondyMurty2008}{book}{
      author={Bondy, J.~A.},
      author={Murty, U. S.~R.},
       title={Graph theory},
      series={Graduate Texts in Mathematics},
   publisher={Springer, New York},
        date={2008},
      volume={244},
        ISBN={978-1-84628-969-9},
         url={https://doi.org/10.1007/978-1-84628-970-5},
      review={\MR{2368647}},
}

\bib{Camion1959}{article}{
      author={Camion, Paul},
       title={Chemins et circuits hamiltoniens des graphes complets},
        date={1959},
     journal={C. R. Acad. Sci. Paris},
      volume={249},
       pages={2151\ndash 2152},
      review={\MR{0122735}},
}

\bib{EhrenmullerFernandesHeise17}{article}{
      author={Chen, Guantao},
      author={Ehrenm\"uller, Julia},
      author={Fernandes, Cristina~G.},
      author={Heise, Carl~Georg},
      author={Shan, Songling},
      author={Yang, Ping},
      author={Yates, Amy~N.},
       title={Nonempty intersection of longest paths in series-parallel
  graphs},
        date={2017},
        ISSN={0012-365X},
     journal={Discrete Math.},
      volume={340},
      number={3},
       pages={287\ndash 304},
         url={https://doi.org/10.1016/j.disc.2016.07.023},
      review={\MR{3584816}},
}

\bib{ChudnovskyEtAl2006}{article}{
      author={Chudnovsky, Maria},
      author={Robertson, Neil},
      author={Seymour, Paul},
      author={Thomas, Robin},
       title={The strong perfect graph theorem},
        date={2006},
        ISSN={0003-486X},
     journal={Ann. of Math. (2)},
      volume={164},
      number={1},
       pages={51\ndash 229},
         url={http://dx.doi.org/10.4007/annals.2006.164.51},
      review={\MR{2233847}},
}

\bib{Galeana-SanchezGomes2008}{article}{
      author={Galeana-S\'anchez, Hortensia},
      author={G\'omez, Ricardo},
       title={Independent sets and non-augmentable paths in generalizations of
  tournaments},
        date={2008},
        ISSN={0012-365X},
     journal={Discrete Math.},
      volume={308},
      number={12},
       pages={2460\ndash 2472},
         url={https://doi.org/10.1016/j.disc.2007.05.016},
      review={\MR{2410452}},
}

\bib{Galeana-SanchezOlsen2016}{article}{
      author={Galeana-S\'anchez, Hortensia},
      author={Olsen, Mika},
       title={A characterization of locally semicomplete {CKI}-digraphs},
        date={2016},
        ISSN={0911-0119},
     journal={Graphs Combin.},
      volume={32},
      number={5},
       pages={1873\ndash 1879},
         url={https://doi.org/10.1007/s00373-016-1708-9},
      review={\MR{3543204}},
}

\bib{GallaiMilgram1960}{article}{
      author={Gallai, T.},
      author={Milgram, A.~N.},
       title={Verallgemeinerung eines graphentheoretischen {S}atzes von
  {R}\'edei},
        date={1960},
        ISSN={0001-6969},
     journal={Acta Sci. Math. (Szeged)},
      volume={21},
       pages={181\ndash 186},
      review={\MR{0140442}},
}

\bib{GuoVolkmann1994}{article}{
      author={Guo, Yubao},
      author={Volkmann, Lutz},
       title={Connectivity properties of locally semicomplete digraphs},
        date={1994},
        ISSN={0364-9024},
     journal={J. Graph Theory},
      volume={18},
      number={3},
       pages={269\ndash 280},
         url={https://doi.org/10.1002/jgt.3190180306},
      review={\MR{1268775}},
}

\bib{GuoVolkmann1994b}{article}{
      author={Guo, Yubao},
      author={Volkmann, Lutz},
       title={On complementary cycles in locally semicomplete digraphs},
        date={1994},
        ISSN={0012-365X},
     journal={Discrete Math.},
      volume={135},
      number={1-3},
       pages={121\ndash 127},
         url={https://doi.org/10.1016/0012-365X(93)E0099-P},
      review={\MR{1310875}},
}

\bib{Huang1998}{article}{
      author={Huang, Jing},
       title={A note on spanning local tournaments in locally semicomplete
  digraphs},
        date={1998},
        ISSN={0166-218X},
     journal={Discrete Appl. Math.},
      volume={89},
      number={1-3},
       pages={277\ndash 279},
         url={https://doi.org/10.1016/S0166-218X(98)00113-9},
      review={\MR{1663115}},
}

\bib{JuvanMoharThomas99}{article}{
      author={Juvan, Martin},
      author={Mohar, Bojan},
      author={Thomas, Robin},
       title={List edge-colorings of series-parallel graphs},
        date={1999},
        ISSN={1077-8926},
     journal={Electron. J. Combin.},
      volume={6},
       pages={Research Paper 42, 6 pp. (electronic)},
         url={http://www.combinatorics.org/Volume_6/Abstracts/v6i1r42.html},
      review={\MR{1728012}},
}

\bib{LeWa01}{article}{
      author={Lee, Orlando},
      author={Wakabayashi, Yoshiko},
       title={Note on a min-max conjecture of {W}oodall},
        date={2001},
        ISSN={0364-9024},
     journal={J. Graph Theory},
      volume={38},
      number={1},
       pages={36\ndash 41},
         url={https://doi.org/10.1002/jgt.1022},
      review={\MR{1849557}},
}

\bib{Lovasz72}{article}{
      author={Lov\'{a}sz, L.},
       title={Normal hypergraphs and the perfect graph conjecture},
        date={1972},
        ISSN={0012-365X},
     journal={Discrete Math.},
      volume={2},
      number={3},
       pages={253\ndash 267},
         url={https://doi.org/10.1016/0012-365X(72)90006-4},
      review={\MR{0302480}},
}

\bib{Merker15}{article}{
      author={Merker, Martin},
       title={Decomposing series-parallel graphs into paths of length 3 and
  triangles},
        date={2015},
        ISSN={1571-0653},
     journal={Electronic Notes in Discrete Mathematics},
      volume={49},
       pages={367 \ndash  370},
  url={http://www.sciencedirect.com/science/article/pii/S1571065315000979},
}

\bib{Meyniel1986}{article}{
      author={Meyniel, Henry},
       title={About colorings, stability and paths in directed graphs},
        date={1989},
        ISSN={0012-365X},
     journal={Discrete Math.},
      volume={74},
      number={1-2},
       pages={149\ndash 150},
         url={https://doi.org/10.1016/0012-365X(89)90206-9},
        note={Graph colouring and variations},
      review={\MR{989130}},
}

\bib{NoRo14}{article}{
      author={Noble, Steven~D.},
      author={Royle, Gordon~F.},
       title={The {M}erino-{W}elsh conjecture holds for series-parallel
  graphs},
        date={2014},
        ISSN={0195-6698},
     journal={European J. Combin.},
      volume={38},
       pages={24\ndash 35},
         url={https://doi.org/10.1016/j.ejc.2013.11.002},
      review={\MR{3149678}},
}

\bib{Redei1934}{article}{
      author={R{\'e}dei, L{\'a}szl{\'o}},
       title={Ein kombinatorischer {S}atz},
        date={1934},
     journal={Acta Litt. Szeged},
      volume={7},
      number={39-43},
       pages={97},
}

\end{biblist}
\end{bibdiv}

\end{document}